\DeclareFontFamily{OMS}{rsfs}{\skewchar\font'60}
\DeclareFontShape{OMS}{rsfs}{m}{n}{<-5>rsfs5 <5-7>rsfs7 <7->rsfs10 }{}
\DeclareSymbolFont{rsfs}{OMS}{rsfs}{m}{n}
\DeclareSymbolFontAlphabet{\scr}{rsfs}
\newtheorem{theorem}{Theorem}[section]
\newtheorem{lemma}[theorem]{Lemma}
\newtheorem{proposition}[theorem]{Proposition}
\newtheorem{corollary}[theorem]{Corollary}
\theoremstyle{definition}
\newtheorem{definition}[theorem]{Definition}
\newtheorem{example}[theorem]{Example}
\theoremstyle{remark}
\newtheorem{remark}[theorem]{Remark}
\newtheorem{question}[theorem]{Question}
\newcommand{\Fr}[2]{F^{#1}_*{#2}}
\newcommand{\Frp}[2]{F^{#1}_*\left({#2}\right)}
\newcommand{\stwo}{$\text{S}_2$}
\renewcommand{\O}{\mbox{$\mathcal{O}$}}
\newcommand{\bm}{\mathfrak{m}}
\newcommand{\ba}{\mathfrak{a}}
\newcommand{\bb}{\mathfrak{b}}
\newcommand{\bQ}{\mathbb{Q}}
\newcommand{\bC}{\mathbb{C}}
\newcommand{\bR}{\mathbb{R}}
\newcommand{\bA}{\mathbb{A}}
\newcommand{\tld}{\widetilde }
\newcommand{\sH}{\scr{H}}
\newcommand{\tensor}{\otimes}
\newcommand{\blank}{\underline{\hskip 10pt}}
\DeclareMathOperator{\Div}{{div}}
\DeclareMathOperator{\Hom}{Hom}
\DeclareMathOperator{\sHom}{{\sH}om}
\DeclareMathOperator{\Ann}{{Ann}}
\DeclareMathOperator{\Spec}{{Spec}}
\DeclareMathOperator{\Supp}{{Supp}}
\DeclareMathOperator{\exc}{{exc}}
\begin{document}

\title{Centers of $F$-purity}
\author{Karl Schwede}

\thanks{The author was partially supported by a National Science Foundation postdoctoral fellowship and by RTG grant number 0502170.}
\address{Department of Mathematics\\ University of Michigan\\ East Hall
530 Church Street \\ Ann Arbor, Michigan, 48109}
\email{kschwede@umich.edu}
\subjclass[2000]{13A35, 14B05}
\keywords{tight closure, F-pure, test ideal, log canonical, center of log canonicity, subadjunction}
\begin{abstract}
In this paper, we study a positive characteristic analogue of the centers of log canonicity of a pair $(R, \Delta)$. We call these analogues \emph{centers of $F$-purity}.  We prove positive characteristic analogues of subadjunction-like results, prove new stronger subadjunction-like results, and in some cases, lift these new results to characteristic zero.  Using a generalization of centers of $F$-purity which we call \emph{uniformly $F$-compatible ideals}, we give a characterization of the test ideal (which unifies several previous characterizations).  Finally, in the case that $\Delta = 0$, we show that uniformly $F$-compatible ideals coincide with the annihilators of the $\mathcal{F}(E_R(k))$-submodules of $E_R(k)$ as defined by Lyubeznik and Smith.
\end{abstract}
\maketitle

\section{Introduction}

In this paper, we study a positive characteristic analogue of \emph{centers of log canonicity}, a notion from algebraic geometry.
Centers of log canonicity are natural objects that appear in the study of the singularities of the minimal model program.  These centers satisfy many (local) geometric properties (see for example \cite{AmbroSeminormalLocus} and \cite{KawamataSubadjunction2}).  We show that the positive characteristic analogue of this notion, which we call \emph{centers of $F$-purity}, satisfies these same geometric properties and, in some cases, vast generalizations of those local properties.  We then use reduction to characteristic $p$ to lift some of these generalizations into characteristic zero.  We also link centers of $F$-purity to a positive characteristic notion that has been studied before, annihilators of $F$-stable submodules of $H^d_{\bm}(R)$ (and their generalizations); also see \cite{LyubeznikSmithCommutationOfTestIdealWithLocalization} and Remark \ref{RemarkLinksWithFESubmodules}.

Consider a pair $(X, \Delta)$ where $X$ is a normal affine scheme and $\Delta$ is an effective $\bQ$-Weil divisor (that is a formal sum of prime Weil divisors with nonnegative rational coefficients) such that $K_X + \Delta$ is $\bQ$-Cartier (that is, some integer multiple is an integral Cartier divisor).  The centers of log canonicity of a pair $(X, \Delta)$ are the (possibly non-closed) points $Q \in X$ where
\begin{itemize}
\item{} for every effective $\bQ$-Cartier divisor $G$ passing through $Q$, the pair $(X, \Delta + G)$ does \emph{NOT} have log canonical singularities at $Q$.
\end{itemize}
In particular, one thinks of the divisor $G$ as having very small (but positive) coefficients.  Roughly speaking, the centers of log canonicity are the points where the singularities of the pair $(X, \Delta)$ are the most severe.  See Section \ref{SubsectionCharacteristicZeroSingularities} for an alternate definition of centers of log canonicity.

In positive characteristic, corresponding to the notion of log canonical pairs, there is a notion of (sharply) $F$-pure pairs $(X, \Delta)$ where $X = \Spec R$, $R$ is an $F$-finite normal domain and $\Delta$ is a $\bQ$-divisor on $X$, \cite{HaraWatanabeFRegFPure}.
Therefore, if one is looking for a positive characteristic analogue of centers of log canonicity for a pair $(X, \Delta)$, it is natural to look for prime ideals $Q \in \Spec R$ such that the pair $(X, \Delta + G)$ is not sharply $F$-pure at $Q$ for any effective $\bQ$-Cartier divisor $G$ that goes through $Q$.  We formulate this condition slightly differently in Section \ref{SectionCentersOfFPurity}.  In fact, we formulate it for triples $(R, \Delta, \ba_{\bullet})$ where $\ba_{\bullet}$ is a graded system of ideals ($\ba_i \cdot \ba_j \subseteq \ba_{i+j}$); see \cite{HaraACharacteristicPAnalogOfMultiplierIdealsAndApplications}.  However, one should note that the results in this paper are interesting even in the case that $\Delta = 0$ and $\ba_{i} = R$ for all $i \geq 0$.

However, once one considers this notion in positive characteristic, it becomes clear that this condition is probably not the right condition to work with (although one can easily show that centers of log canonicity reduce generically, from characteristic zero, to become centers of $F$-purity).  The right condition to consider (formulated in the case that $\Delta = 0$) is ideals $I \subset R$ such that for any map $\phi : R^{1 \over p^e} \rightarrow R$, the submodule $I^{1 \over p^e} \subseteq R^{1 \over p^e}$ is sent into $I$ (that is, $\phi(I^{1 \over p^e}) \subseteq I$), see Section \ref{SectionCOnsistentlyFCompatibleIdeals}.  We call ideals that satisfy this condition \emph{uniformly $F$-compatible}.   The reason for this name is to remind readers of the connection to the notion of compatibly $F$-split ideals of Mehta and Ramanathan, \cite{MehtaRamanathanFrobeniusSplittingAndCohomologyVanishing}.  In particular, for an $F$-pure ring, uniformly $F$-compatible ideals are compatibly $F$-split.  As implied above, the prime uniformly $F$-compatible ideals are precisely the centers of $F$-purity.

There are several ways to characterize uniformly $F$-compatible ideals:
\vskip 6pt
\hskip -12pt
{\bf Lemma \ref{LemmaFSubmoduleLocalCharacterizationOfCenterOfFPurity}, Proposition \ref{PropFedderCriterionForCompatible}. }{ \it Suppose that $(S, \bm)$ is an $F$-finite regular local ring of prime characteristic and that $R = S/I$ is a quotient.  Suppose that $J' \subset S$ is an ideal containing $I$ and that $J = J'/I \subset R$.  Finally let $E_R$ denote the injective hull of the residue field.   Then the following are equivalent:
\begin{itemize}
 \item[(a)]  $J$ is uniformly $F$-compatible.
 \item[(b)]   For every $e > 0$ and every $f \in J$, the composition
\[
\xymatrix@R=3pt{
\Ann_{E_R}(J) = E_{R/J} \ar[r] & E_R \ar[r] & E_R \tensor_R R^{1 \over p^e} \ar[r]^-{\times f^{1 \over p^e}} & E_R \tensor_R R^{1 \over p^e}
}
\]
is zero.
\item[(c)]  For every $e > 0$ we have $(I^{[p^e]} : I) \subseteq (J'^{[p^e]} : J')$.

\end{itemize}}
\vskip 6pt
\hskip -12pt  Characterization (b) generalizes to the contexts of triples $(R, \Delta, \ba_\bullet)$, and characterization (c) generalizes to the context of pairs $(R, \ba_{\bullet})$.

As it turns out, the notion of uniformly $F$-compatible ideals has been studied by Smith and Lyubeznik before, but in a dual context (and not for pairs or triples).  In particular, in the case of a complete local ring $(R, \bm)$ with injective hull of the residue field $E = E(R/\bm)$, uniformly $F$-compatible ideals correspond to the $\mathcal{F}(E)$-submodules of $E$.  For details see \cite[Proposition 5.2]{LyubeznikSmithCommutationOfTestIdealWithLocalization} where they characterize the annihilators of $\mathcal{F}(E)$-submodules of $E$ by condition (c) above.  The notion ``$\mathcal{F}(E)$-submodules of $E$'' is a generalization of Frobenius stable submodules of $H^d_{\bm}(R)$; see \cite{SmithFRatImpliesRat}.

The question of whether there are only finitely many $F$-stable submodules of $H^d_{\bm}(R)$ has been recently studied in \cite{SharpGradedAnnihilatorsOfModulesOverTheFrobeniusSkewPolynomialRing} and also in \cite{EnescuHochsterTheFrobeniusStructureOfLocalCohomology}.  Using their techniques, it immediately follows that there are only finitely many uniformly $F$-compatible ideals associated to a triple $(R, \Delta, \ba_{\bullet})$ if $R$ is a local ring; see Section \ref{SectionRelationsToFStableSubmodulesAndFinitenessOfCenters}.

In characteristic zero, there are a number of theorems related to centers of log canonicity that go under the heading of subadjunction.  In Section \ref{SectionResultsRelatedToFAdjunction} we prove analogues (and in some cases vast generalizations) of these results for centers of $F$-purity (and uniformly $F$-compatible ideals).  In particular, we show that a center of $F$-purity of maximal height (as an ideal) automatically cuts out a strongly $F$-regular scheme; compare with Kawamata's subadjunction theorem, \cite{KawamataSubadjunction2}.  In fact, in a reduced $F$-finite $F$-pure local ring, the center of $F$-purity of maximal height is the splitting prime as defined by Aberbach and Enescu; see \cite{AberbachEnescuStructureOfFPure}.   We also prove the following theorem:
\vskip 6pt
\hskip -12pt
{\bf Theorem \ref{TheoremUnionOfCentersOfFPurityFormAnFPureSubscheme}.  }{\it Suppose that $(R, \Delta, \ba_{\bullet})$ is sharply $F$-pure.  Then any (scheme-theoretic) finite union of centers of sharp $F$-purity for $(R, \Delta, \ba_{\bullet})$ form an $F$-pure subscheme.}
\vskip 6pt
\hskip -12pt
An analogous (but much weaker) result in characteristic zero is that, for a log canonical pair, any (scheme-theoretic) union of centers of log canonicity form a seminormal scheme; see \cite{AmbroSeminormalLocus}.  When we lift Theorem \ref{TheoremUnionOfCentersOfFPurityFormAnFPureSubscheme} to characteristic zero we obtain the following.
\vskip 6pt
\hskip -12pt
{\bf Corollary \ref{CorollaryUnionOfCentersOfLogCanonicityFPureType}. }  {\it Suppose $(X, \Delta)$ is a pair over $\bC$ and $K_X + \Delta$ is $\bQ$-Cartier.  If $(X, \Delta)$ is of dense sharply $F$-pure type, a class of singularities conjecturally equivalent to being (semi-)log canonical, then any (scheme-theoretic) union of centers of log canonicity also has dense $F$-pure type.  In particular, any such union has Du Bois singularities.}
\vskip 6pt
We also show that several common ideals are uniformly $F$-compatible.  In particular, we show that the conductor ideal is uniformly $F$-compatible; see Proposition \ref{PropConductorIsFCompatible}.  A result of Enescu and Hochster implies that annihilators of $F$-stable submodules of $H^d_{\bm}(R)$ are uniformly $F$-compatible; see Remark \ref{RemarkFIdealsAreCompatible} and \cite[Theorem 4.1]{EnescuHochsterTheFrobeniusStructureOfLocalCohomology}.
Furthermore, test ideals (both finitistic and non-finitistic/big; see Section \ref{SectionAppendixOnBigTestIdeals}) are uniformly $F$-compatible.  In fact we can say more:
\vskip 6pt
\hskip -12pt
{\bf Theorem \ref{TheoremTestIdealIsSmallestFCompatible}. }{\it Given a triple $(R, \Delta, \ba_{\bullet})$, the non-finitistic test ideal $\tau_b(\Delta, \ba_{\bullet})$ is the smallest uniformly $(\Delta, \ba_{\bullet}, F)$-compatible ideal whose intersection with $R^{\circ}$ is non-empty.}
\vskip 6pt
\hskip -12pt
In the case where $\Delta = 0$ and $\ba_i = R$ for all $R$, the previous result was proven in \cite{LyubeznikSmithCommutationOfTestIdealWithLocalization}, although it was phrased in the dual language.  On the other hand, in the case that $R$ is regular, $\Delta = 0$ and $\ba_i = \bb^{\lceil t i \rceil}$ (for some ideal $\bb$ and positive real number $t$), the previous result is very closely related to the characterization of the test ideal given in \cite{BlickleMustataSmithDiscretenessAndRationalityOfFThresholds}.
Finally, using the technique of Hara and Watanabe, see \cite{HaraWatanabeFRegFPure}, we also show that multiplier ideals (as well as adjoint ideals and many other related constructions) are uniformly $F$-compatible; see Theorem \ref{TheoremGeneralizationOfCentersOfLogCanonicityAreFCompatible}.

\vskip 12pt \hskip -12pt {\it Acknowledgements: }  I would like to thank Mel Hochster, Mircea Musta{\c{t}}{\u{a}} and Karen Smith for several valuable discussions.  I would also like to thank Shunsuke Takagi for reminding me about the connections with splitting primes and also for explaining how a result of his (which was joint with Nobuo Hara) could be used to relate the non-finitistic test ideal $\tld \tau(\Delta) := \Ann_R 0^{* \Delta}_{E_R}$ with the big test ideal $\tau_b(\Delta)$ of Mel Hochster (see Subsection \ref{SectionAppendixOnBigTestIdeals} for more details).  I would like to thank Professor Jesper Funch Thomsen for pointing out a relevant result in \cite{BrionKumarFrobeniusSplittingMethods}.  Finally, I would like the referee for many useful suggestions and for pointing out many typos.

\section{Definitions and notations}

All rings will be assumed to be noetherian and excellent.  If $R$ is a reduced ring, then $R^{\circ}$ is defined to be the set of elements of $R$ not contained in any minimal primes of $R$.  Unless otherwise specified, rings will be assumed to have characteristic $p$ (that is, they contain a field of characteristic $p > 0$).

If $R$ is a ring of characteristic $p$, we use $F^e : R \rightarrow R$ to denote the $e$-iterated Frobenius (ie. $p^e$th power) map.  We also use ${}^e R$ to denote $R$ viewed as an $R$-module via the $e$-iterated action of Frobenius.  If $R$ is reduced, then ${}^e R$ can also be viewed as $R^{1 \over p^e}$.   If we set $X = \Spec R$ and abuse notation to let $F^e : X \rightarrow X$ to denote the dual map of schemes, then it is harmless to identify ${}^eR$ with (the global sections of) $F^e_* \O_X$. We will usually write $\Fr{e}{R}$ instead of ${}^e R$ (especially when dealing with divisors on $X$).

\begin{definition}
A ring $R$ of characteristic $p > 0$ is called \emph{$F$-finite} if ${}^1 R$ is finite as an $R$-module.
\end{definition}

\begin{remark}
Throughout this paper, \emph{all} rings of positive characteristic will be assumed to be $F$-finite.
\end{remark}

Assume $R$ is a ring of positive characteristic.  If $M$ is a $F^e_* R$-module and if $\ba \subset R$ is an ideal, then we will use $F^e_* \ba M$ to denote the module corresponding to multiplication by the elements of $\ba$, thought of as elements of $F^e_* R$.  Alternately, if we think of  $M$ as an $R^{1 \over p^e}$-module, then $F^e_* \ba M$ corresponds to $\ba^{1 \over p^e} M$.

\subsection{Positive Characteristic Triples}
\label{SubsectionPositiveCharacteristicTriples}
\numberwithin{equation}{subsection}
Suppose $R$ is a ring.  A graded system of ideals $\ba_{\bullet}$ is a collection of ideals $\ba_i$ for integers $i \geq 0$ such that $\ba_i \ba_j \subseteq \ba_{i + j}$.  A common class of graded systems of ideals is $\ba^{\lceil t i\rceil}$ where $t$ is a positive real number.  Notice that
\begin{equation}
\label{EquationRoundingArgument}
\ba^{\lceil t i\rceil} \ba^{\lceil t j \rceil} = \ba^{\lceil t i\rceil + \lceil t j \rceil} \subseteq \ba^{\lceil t(i + j)\rceil}.
\end{equation}
Other important classes are products of the above construction $\Pi_j \ba_j^{\lceil t_j i\rceil}$ and symbolic powers of prime ideals $P$, $\ba_i = P^{(i)}$.

Historically, the second object of a ``pair'' was an effective $\bQ$-Weil divisor $\Delta$ (that is, a formal sum of prime Weil divisors with nonnegative rational coefficients).  Furthermore, it was assumed that $K_X + \Delta$ was $\bQ$-Cartier, a hypothesis that we will not generally need.   For the basics of Weil divisors and reflexive sheaves, please see \cite{HartshorneGeneralizedDivisorsOnGorensteinSchemes}.  If $X = \Spec R$ and $D$ is a divisor on $X$, then we will use $R(D)$ to denote the global sections of $\O_X(D)$.

\begin{definition}
A \emph{triple} $(R, \Delta, \ba_{\bullet})$ is the combined information of a reduced ring $R$, an effective $\bR$-divisor $\Delta$ on $X = \Spec R$ and a graded system of ideals $\ba_i \subseteq R$ satisfying the following conditions:
\begin{itemize}
 \item[(a)] If $\Delta \neq 0$ then $R$ is a normal domain.
 \item[(b)] $\ba_0 = R$.
 \item[(c)] $\ba_i \cap R^{\circ} \neq \emptyset$ for all $i$ (equivalently, $\ba_1 \cap R^{\circ} \neq \emptyset$)
\end{itemize}
If $\Delta = 0$, we will use $(R, \ba_{\bullet})$ to denote $(R, \Delta, \ba_{\bullet})$.  Likewise if $\ba_i = R$ for all $i$, then we will use $(R, \Delta)$ to denote $(R, \Delta, \ba_{\bullet})$.
\end{definition}

\begin{remark}
 Roughly speaking, what condition (a) says is that if we want to work with divisors, then we assume that $R$ is normal.
\end{remark}


\begin{remark}
 While it might seem more complicated to work with triples involving graded systems of ideals $\ba_{\bullet}$, instead of triples $(R,\Delta, \ba^t)$, this greater generality actually simplifies many arguments.  This is because for pairs $(R, \ba^t)$, one repeatedly must make, in various guises, the rounding argument made in equation \ref{EquationRoundingArgument}.
\end{remark}

\begin{definition}
 If $S$ is an $R$-algebra, and $\ba_{\bullet}$ is a graded system of ideals in $R$, then we will use $\ba_{\bullet}S$ to denote the graded system $\{ \ba_i S \}_{i \geq 0}$.
\end{definition}

\begin{definition} \cite{SchwedeSharpTestElements}, \cite[Definition 2.7]{HaraACharacteristicPAnalogOfMultiplierIdealsAndApplications}, \cite{TakagiInversion}, \cite{HaraWatanabeFRegFPure}, \cite{HochsterRobertsFrobeniusLocalCohomology}
\label{DefinitionSharplyFPurePair}
Suppose $R$ is an $F$-finite ring of characteristic $p > 0$.  A triple $(R, \Delta, \ba_{\bullet})$ is called \emph{sharply $F$-pure} if, for a single $e > 0$, there exists an element $d \in \ba_{p^e - 1}$ such that the composition
\[
\xymatrix{
\xi := \Fr{e}{i} \circ \left(\Fr{e}{(\times d)}\right) \circ F^e : R \ar[r] & \Fr{e}{ R} \ar[r]^-{\Fr{e}{(\times d)}} & \Fr{e}{R} \ar[r]& \Fr{e}{R(\lceil (p^e - 1)\Delta \rceil)}
}
\]
splits as a map of $R$-modules.  Here $i$ denotes the inclusion $R \rightarrow R(\lceil (p^{e} - 1)\Delta \rceil)$ and $\Fr{e}{(\times d)}$ denotes the multiplication of $\Fr{e}{R}$-modules by $d$ (ie, $\Fr{e}{(\times d)}$ corresponds to the multiplication by $d^{1 \over p^e}$ if we view $\Fr{e}{R}$ as $R^{1 \over p^e}$).
\end{definition}

\begin{lemma} \cite[Proposition 3.3]{SchwedeSharpTestElements}
\label{LemmaSharplyFPureImpliesInfiniteSplittings}
Suppose that for some $e > 0$ and some $d \in \ba_{p^e-1}$, the map $\xi_e : R \rightarrow \Fr{e}{R(\lceil (p^e - 1)\Delta \rceil)}$, which sends $1$ to $d$, splits.
Then for all positive integers $n$, there exists $d_n \in \ba_{p^{ne} - 1}$ such that the map
\[
R \rightarrow \Fr{ne}{R(\lceil (p^{ne} - 1)\Delta \rceil)}
\]
which sends $1$ to $d_n$, splits.
\end{lemma}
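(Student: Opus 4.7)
The plan is to proceed by induction on $n$, with the base case $n = 1$ being the hypothesis. For the inductive step, suppose we have already produced $d_n \in \ba_{p^{ne}-1}$ and a splitting $\phi_n : \Fr{ne}{R(\lceil (p^{ne}-1)\Delta \rceil)} \to R$ with $\phi_n(\Fr{ne}{d_n}) = 1$. The natural candidate for the next element is $d_{n+1} := d \cdot d_n^{p^e}$; iterating this recipe from $n = 1$ gives the telescoping product $d_n = d^{(p^{ne}-1)/(p^e-1)}$. Since $d \in \ba_{p^e-1}$ and the graded-system hypothesis gives $d_n^{p^e} \in \ba_{p^e(p^{ne}-1)}$, we obtain $d_{n+1} \in \ba_{(p^e-1) + p^e(p^{ne}-1)} = \ba_{p^{(n+1)e}-1}$, which handles the combinatorial side of the claim.

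For the divisor side, the identity $p^{(n+1)e} - 1 = (p^e-1) + p^e(p^{ne}-1)$, together with the subadditivity of the ceiling and $\lceil p^e \alpha \rceil \leq p^e \lceil \alpha \rceil$, yields
$$\lceil(p^{(n+1)e}-1)\Delta\rceil \;\leq\; \lceil(p^e-1)\Delta\rceil + p^e\lceil(p^{ne}-1)\Delta\rceil,$$
and hence an inclusion of rank-one reflexive $R$-modules $R(\lceil(p^{(n+1)e}-1)\Delta\rceil) \hookrightarrow R(\lceil(p^e-1)\Delta\rceil + p^e\lceil(p^{ne}-1)\Delta\rceil)$. To construct the splitting, I would tensor the given splitting $\phi : \Fr{e}{R(\lceil(p^e-1)\Delta\rceil)} \to R$ of $\xi_e$ with $R(\lceil(p^{ne}-1)\Delta\rceil)$ over $R$ and invoke the projection formula $\Fr{e}{R(D_1)} \tensor_R R(D_2) \cong \Fr{e}{R(D_1 + p^e D_2)}$ (valid up to reflexification because $R$ is normal whenever $\Delta \neq 0$, per the definition of a triple) to obtain $\phi \tensor \mathrm{id} : \Fr{e}{R(\lceil(p^e-1)\Delta\rceil + p^e\lceil(p^{ne}-1)\Delta\rceil)} \to R(\lceil(p^{ne}-1)\Delta\rceil)$. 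Applying $\Fr{ne}{(-)}$ and composing with $\phi_n$ gives $\phi_n \circ \Fr{ne}{(\phi \tensor \mathrm{id})} : \Fr{(n+1)e}{R(\lceil(p^e-1)\Delta\rceil + p^e\lceil(p^{ne}-1)\Delta\rceil)} \to R$, which restricts along the inclusion above to the candidate $\phi_{n+1} : \Fr{(n+1)e}{R(\lceil(p^{(n+1)e}-1)\Delta\rceil)} \to R$.

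To verify $\phi_{n+1}$ really splits the map sending $1 \mapsto \Fr{(n+1)e}{d_{n+1}}$, trace the element through the composition. Under the projection-formula isomorphism $\Fr{e}{R(D_1)} \tensor_R R(D_2) \cong \Fr{e}{R(D_1 + p^e D_2)}$, which sends $\Fr{e}{s} \tensor t$ to $\Fr{e}{s \cdot t^{p^e}}$, the element $\Fr{(n+1)e}{(d \cdot d_n^{p^e})}$ corresponds to $\Fr{ne}{(\Fr{e}{d} \tensor d_n)}$; then $(\phi \tensor \mathrm{id})(\Fr{e}{d} \tensor d_n) = \phi(\Fr{e}{d}) \cdot d_n = d_n$, and finally $\phi_n(\Fr{ne}{d_n}) = 1$, so $\phi_{n+1}(\Fr{(n+1)e}{d_{n+1}}) = 1$. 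The main subtlety is the bookkeeping that makes the projection-formula identifications, the Frobenius pushforwards, and the ceiling-subadditivity estimate all simultaneously compatible; it is precisely this compatibility that forces the specific recursion $d_{n+1} = d \cdot d_n^{p^e}$ rather than a naive product like $d \cdot d_n$.
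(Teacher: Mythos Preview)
Your proof is correct and follows essentially the same strategy as the paper: both iterate the given splitting by twisting it by an appropriate divisor, invoke the same ceiling-subadditivity estimate $\lceil(p^{(n+1)e}-1)\Delta\rceil \leq \lceil(p^e-1)\Delta\rceil + p^e\lceil(p^{ne}-1)\Delta\rceil$, and arrive at the same element $d_n = d^{1 + p^e + \cdots + p^{(n-1)e}}$. The only cosmetic difference is that you phrase the twist via the projection formula $\Fr{e}{R(D_1)} \tensor_R R(D_2) \cong \Fr{e}{R(D_1 + p^e D_2)}$, whereas the paper applies the functor $\Hom_R(R(-D), \blank)$; for rank-one reflexive sheaves on a normal scheme these are the same operation.
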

The proof is essentially the same as in \cite{SchwedeSharpTestElements}.
\begin{proof}
Let us use $\psi_e : \Fr{e}{R(\lceil (p^e - 1)\Delta \rceil)} \rightarrow R$ to denote the splitting of $\xi_e$.
Notice that if we apply $\Hom_R(R(-\lceil (p^e - 1)\Delta \rceil), \blank)$ to the map $\psi_e$ we obtain a map
\[
 \xymatrix{
\Fr{e}{R(p^e \lceil (p^e - 1)\Delta \rceil + \lceil (p^e - 1)\Delta \rceil)} \ar[r] & R(\lceil (p^e - 1)\Delta \rceil)
 }
\]
which sends $d$ to $1$ (at least at the level of the total field of fractions), compare with \cite[Proof of Lemma 2.5]{TakagiInterpretationOfMultiplierIdeals}.  Since
\[
p^e \lceil (p^e - 1)\Delta \rceil + \lceil (p^e - 1)\Delta \rceil \geq \lceil (p^{2 e} - 1)\Delta \rceil
\]
we obtain a map
\[
\gamma_e : \Fr{e}{R(\lceil (p^{2e} - 1)\Delta \rceil)} \rightarrow R(\lceil (p^e - 1)\Delta \rceil)
\]
which also sends $d$ to $1$.

We then apply the functor $\Fr{e}{\blank}$ to this map and compose with $\psi_e$.  This gives us a splitting for the map
\[
\xi_{2e} : R \rightarrow \Fr{2e}{R(\lceil (p^{2e} - 1)\Delta \rceil)}
\]
which sends $1$ to $d^{1 + p}$.

By repeating this process
we obtain a splitting for the map $R \rightarrow \Fr{ne}{R(\lceil (p^{ne} - 1)\Delta \rceil)}$ which sends $1$ to $d' = d^{1 + p^e + \dots + p^{(n-1)e}}$.  Therefore, it is sufficient to show that $d' \in \ba_{p^{ne} - 1}$.  However, $d' \in (\ba_{p^e - 1})^{1 + p^e + \dots + p^{(n-1)e}} \subseteq \ba_{(p^e - 1)(1 + p^e + \dots + p^{(n-1)e})} = \ba_{p^{ne} - 1}$.
\end{proof}

\begin{remark}
Given a pair $(R, \Delta)$ as above, one can construct a family of ideals $\ba_{\bullet}$ as follows:  Set $\ba_n = R(-\lceil n\Delta \rceil)$. Note that if $\Delta$ is an integral prime divisor, then $\ba_n$ is simply the $n$th symbolic power of the defining ideal of $\Delta$.  Thus it might be tempting to try to absorb the $\Delta$ into the graded system of ideals $\ba_{\bullet}$; however, I do not know if this is possible.

If the pair $(R, \ba_{\bullet})$, with $\ba_n = R(-\lceil n\Delta \rceil)$, is sharply $F$-pure, then so is the pair $(R, \Delta)$.  In fact, if $(R, \ba_{\bullet})$ is sharply $F$-pure, then for infinitely many $e > 0$, there exists a Cartier divisor $D \geq \lceil (p^e - 1) \Delta \rceil$ such that the map $R \rightarrow \Fr{e}{R(D)}$ splits.  Explicitly, the divisor $D$ is $\Div(d)$ where $d$ is the element $d \in \ba_{p^e - 1}$ along which the map splits.

However, I do not know if the converse holds.  That is, I do not know if $(R, \Delta)$ being sharply $F$-pure implies that the corresponding $(R, \ba_{\bullet})$ is sharply $F$-pure.
\end{remark}

\begin{definition} \cite[Definition 3]{MehtaRamanathanFrobeniusSplittingAndCohomologyVanishing}
\label{DefinitionCompatiblyFSplit}
Suppose that $R$ is a reduced $F$-finite ring and that $I \subseteq R$ is an ideal.  We say that $R$ is \emph{compatibly $F$-split along $I$} if for some $e > 0$ there exists a map $\psi$ such that
\[
\xymatrix{
R \ar[r]^{F^e} & \Fr{e}{R} \ar[r]^-{\psi} & R
}
\]
the composition is the identity and we also have $\psi\left(\Fr{e}{(I)}\right) \subseteq I$.
\end{definition}

\begin{definition} \cite{HaraWatanabeFRegFPure}, \cite{HaraACharacteristicPAnalogOfMultiplierIdealsAndApplications}, \cite{TakagiInversion}
\label{DefinitionStronglyFRegularPair}
Suppose $R$ is an $F$-finite ring of characteristic $p > 0$.  A triple $(R, \Delta, \ba_\bullet)$ is called \emph{strongly $F$-regular} if, for every $c \in R^{\circ}$ there exists $e > 0$ (equivalently infinitely many $e > 0$) and an element $d \in \ba_{p^e - 1}$ such that the map
\[
\xymatrix{
R \ar[r] & \Fr{e}{R(\lceil (p^{e} - 1)\Delta \rceil)}
}
\]
which sends $1$ to $cd$, splits as a map of $R$-modules.
\end{definition}

The following criterion for strong $F$-regularity is also very useful.

\begin{lemma} \cite[Theorem 5.9(a)]{HochsterHunekeFRegularityTestElementsBaseChange}
\label{LemmaSplitAlongAndComplementImpliesFRegular}
Suppose that $(R, \Delta, \ba_{\bullet})$ is a triple. Further suppose there exists an element $c \in R^{\circ}$ such that:
\begin{itemize}
\item[(i)]  There exists an $e > 0$ and some $f \in \ba_{p^e - 1}$ so that the map
\[
\xymatrix{
R \ar[r] & \Fr{e}{R(\lceil (p^{e} - 1)\Delta \rceil)}
}
\]
which sends $1$ to $cf$, splits.
\item[(ii)]  The triple $(R_c, \Delta|_{\Spec R_c}, \ba_{\bullet} R_c)$ is strongly $F$-regular (here $R_c$ is used to denote the localization at $c$).
\end{itemize}
Then $(R, \Delta, \ba_{\bullet})$ is also strongly $F$-regular.
\end{lemma}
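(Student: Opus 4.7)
The plan is to adapt the classical Hochster--Huneke splitting criterion (compare \cite{HochsterHunekeFRegularityTestElementsBaseChange}) to the triple setting, doing the divisor and $\ba_{\bullet}$ bookkeeping carefully. Fix $c' \in R^\circ$; I need to produce some $e' > 0$ and some $d' \in \ba_{p^{e'} - 1}$ such that $R \to \Fr{e'}{R(\lceil(p^{e'}-1)\Delta\rceil)}$ sending $1$ to $c'd'$ splits.

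First I would use hypothesis (ii). Since $c' \in R_c^\circ$ and $(R_c, \Delta|_{\Spec R_c}, \ba_{\bullet}R_c)$ is strongly $F$-regular, there exist $e_1 > 0$ and $d_1 \in \ba_{p^{e_1}-1}R_c$ together with a splitting $\psi_c: \Fr{e_1}{R_c(\lceil(p^{e_1}-1)\Delta\rceil)} \to R_c$ sending $c'd_1$ to $1$. Because $R$ is Noetherian and $\Fr{e_1}{R(\lceil(p^{e_1}-1)\Delta\rceil)}$ is finitely generated, $\Hom$ commutes with localization at $c$. Clearing denominators gives $\phi \in \Hom_R(\Fr{e_1}{R(\lceil(p^{e_1}-1)\Delta\rceil)}, R)$, an element $d_1' \in \ba_{p^{e_1}-1}$, and positive integers $L,M$ such that $\phi(\Fr{e_1}{c^L c' d_1'}) = c^M$.

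Next I would invoke hypothesis (i). By Lemma \ref{LemmaSharplyFPureImpliesInfiniteSplittings}, for every $n \geq 1$ there is a splitting $\pi_n: \Fr{ne}{R(\lceil(p^{ne}-1)\Delta\rceil)} \to R$ sending $(cf)^{m_n}$ to $1$, where $m_n = 1 + p^e + \cdots + p^{(n-1)e}$ and $f^{m_n} \in \ba_{p^{ne}-1}$. Choose $s$ large enough that $m_s \geq M$, put $r := c^{m_s - M} f^{m_s} \in R$, and set
\[
d := d_1' \cdot c^{(m_s - M)p^{e_1} + L} \cdot f^{m_s p^{e_1}}, \qquad x := c' \cdot d = r^{p^{e_1}} \cdot c^L c' d_1'.
\]
Then by $R$-linearity of $\phi$, $\phi(\Fr{e_1}{x}) = r \cdot c^M = (cf)^{m_s}$, so the composition $\pi_s \circ \Fr{se}{\phi}$ sends $\Fr{e_1 + se}{x}$ to $\pi_s(\Fr{se}{(cf)^{m_s}}) = 1$. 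To see $d \in \ba_{p^{e_1+se}-1}$, I would use the graded-system inclusions: $d_1' \in \ba_{p^{e_1}-1}$ and $f^{m_s p^{e_1}} = (f^{m_s})^{p^{e_1}} \in \ba_{(p^{se}-1)p^{e_1}}$, so their product lies in $\ba_{(p^{e_1}-1)+(p^{se}-1)p^{e_1}} = \ba_{p^{e_1+se}-1}$, and multiplying by a power of $c \in R$ preserves this.

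The main obstacle is divisor bookkeeping: the map $\pi_s \circ \Fr{se}{\phi}$ has natural source $\Fr{e_1+se}{R(\lceil(p^{e_1}-1)\Delta\rceil)}$, whereas the SFR definition asks for a splitting out of $\Fr{e_1+se}{R(\lceil(p^{e_1+se}-1)\Delta\rceil)}$. To fix this, I would apply the trick used in the proof of Lemma \ref{LemmaSharplyFPureImpliesInfiniteSplittings}: apply $\Hom_R(R(-\lceil(p^{e_1}-1)\Delta\rceil), \blank)$ to $\pi_s$ to produce a map $\pi_s': \Fr{se}{R(p^{se}\lceil(p^{e_1}-1)\Delta\rceil + \lceil(p^{se}-1)\Delta\rceil)} \to R(\lceil(p^{e_1}-1)\Delta\rceil)$ sending a divisor-shifted version of $(cf)^{m_s}$ to $1$. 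Using the inequality $p^{se}\lceil(p^{e_1}-1)\Delta\rceil + \lceil(p^{se}-1)\Delta\rceil \geq \lceil(p^{e_1+se}-1)\Delta\rceil$, the composite $\phi \circ \Fr{e_1}{\pi_s'}$ then restricts to a map $\Fr{e_1+se}{R(\lceil(p^{e_1+se}-1)\Delta\rceil)} \to R$ whose value on the image of $c'd$ is $1$, giving the required splitting for strong $F$-regularity. This is the delicate step: the exponent and divisor computations have to line up exactly, as in the iteration argument of Lemma \ref{LemmaSharplyFPureImpliesInfiniteSplittings}.
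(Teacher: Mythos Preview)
Your strategy is the same as the paper's, and everything through the construction of $\phi$, $\pi_s$, the choice of $d$, and the verification that $d\in\ba_{p^{e_1+se}-1}$ is fine. The gap is in the final ``divisor bookkeeping'' paragraph: you twist the wrong map and, in doing so, silently reverse the order of composition.

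Up to that point you have shown that the composition $\pi_s\circ\Fr{se}{\phi}$ sends $c'd$ to $1$, where $\phi$ acts first (on the inner $e_1$ layer) and $\pi_s$ acts second. In the fix you propose $\phi\circ\Fr{e_1}{\pi_s'}$, so now $\pi_s'$ acts first. But you have no control over $\pi_s'(c'd)$; you only know $\pi_s'((cf)^{m_s})=1$. The factorization $c'd=r^{p^{e_1}}\cdot(c^Lc'd_1')$ was engineered so that the $p^{e_1}$-th power $r^{p^{e_1}}$ pulls through the $R$-linear map $\phi$ on $\Fr{e_1}{(\,\cdot\,)}$; it does not pull through $\pi_s'$, which is $R$-linear on $\Fr{se}{(\,\cdot\,)}$ and hence only passes $p^{se}$-th powers. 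So the asserted equality $(\phi\circ\Fr{e_1}{\pi_s'})(c'd)=1$ is unjustified.

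The paper repairs the divisor mismatch by twisting the \emph{inner} map instead: apply $\Hom_R\bigl(R(-\lceil(p^{se}-1)\Delta\rceil),\,\blank\bigr)$ to $\phi$ to obtain
\[
\Phi:\Fr{e_1}{R\bigl(p^{e_1}\lceil(p^{se}-1)\Delta\rceil+\lceil(p^{e_1}-1)\Delta\rceil\bigr)}\longrightarrow R(\lceil(p^{se}-1)\Delta\rceil),
\]
which agrees with $\phi$ on the fraction field and hence still sends $c'd$ to $(cf)^{m_s}$. Using the same inequality you wrote down (with the roles of $e_1$ and $se$ swapped), $\Phi$ restricts to a map out of $\Fr{e_1}{R(\lceil(p^{e_1+se}-1)\Delta\rceil)}$. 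Now the composition $\pi_s\circ\Fr{se}{\Phi}$ has the correct source $\Fr{e_1+se}{R(\lceil(p^{e_1+se}-1)\Delta\rceil)}$, keeps the original order (inner $e_1$, outer $se$), and your computation $\Phi(c'd)=(cf)^{m_s}\mapsto 1$ goes through unchanged.
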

\begin{proof}
First note that by hypothesis (i), it is easy to see that the localization map $R \rightarrow R_c = R[c^{-1}]$ is injective.  Now fix $c' \in R^{\circ}$.
By hypothesis (ii), we can find an $e''$ so that there exists $g' \in (\ba_{p^{e''} - 1} R_c)$ giving us a splitting  $\phi : \Fr{e''}{R_c(\lceil (p^{e''} - 1) \Delta|_{\Spec R_c} \rceil )} \rightarrow R_c$ which sends $c' g'$ to $1$.  Notice, that we can write $g'$ as $g' = g/c^{k p^{e''}}$ with $g \in \ba_{p^{e''} - 1}$ so that $\phi$ sends $c' g$ to $c^k$ (all of these elements are viewed in the total field of fractions).  This $\phi$ must be a localization of some $\psi : \Fr{e''}{R(\lceil (p^{e''} - 1) \Delta\rceil)} \rightarrow R$ which sends $c' g$ to $c^m$ for some $m > 0$.  Without loss of generality, since making $m$ larger is harmless, we may assume that $m = p^{(n-1)e} + \dots + p^{e} + 1$ where $e$ is the number guaranteed by hypothesis (i) so that $\psi$ sends $c' g$ to $c^{p^{(n-1)e} + \dots + p^{e} + 1}$.


Note that $h = f^{p^{(n-1)e} + \dots + p^{e} + 1} \in \ba_{p^{ne} - 1}$ as in the proof of Lemma \ref{LemmaSharplyFPureImpliesInfiniteSplittings}.  Since $\psi$ is an $R$-homomorphism, we see that $\psi$ sends $h^{p^{e''}} c' g$ to $h c^{p^{(n-1)e} + \dots + p^{e} + 1} = (fc)^{p^{(n-1)e} + \dots + p^{e} + 1}$.  Now apply the functor $\Hom_R(R(-\lceil (p^{ne} - 1)\Delta \rceil), \blank)$ to $\psi$.  This induces a map
\[
\Psi : \Fr{e''}{R(\lceil (p^{ne + e''} - 1)\Delta \rceil)} \rightarrow \Fr{e''}{R(\lceil (p^{e''} - 1) \Delta\rceil + p^{e''}\lceil (p^{ne} - 1)\Delta \rceil)} \rightarrow R(\lceil (p^{ne} - 1)\Delta \rceil)
\]
which sends $h^{p^{e''}} c' g$ to $h c^{p^{(n-1)e} + \dots + p^{e} + 1}$.

Consider the splitting guaranteed by hypothesis (i).
Via composition, just as in the proof of Lemma \ref{LemmaSharplyFPureImpliesInfiniteSplittings}, we obtain a map $\theta : \Fr{ne}{R(\lceil (p^{ne} - 1)\Delta \rceil)} \rightarrow R$ which sends $(fc)^{p^{(n-1)e} + \dots + p^{e} + 1}$ to $1$.

We then apply $\Fr{ne}{\blank}$ to $\Psi$ and compose with $\theta$.  Therefore, we obtain a map
\[
\xymatrix{
\Fr{e'' + ne}{R(\lceil (p^{ne + e''} - 1)\Delta \rceil)} \ar[r] & \Fr{ne}{R(\lceil (p^{ne} - 1)\Delta \rceil)} \ar[r]^-{\theta} & R
}
\]
which sends $c' g h^{p^{e''}}$ to $1$.  Thus, it is sufficient to show that $g h^{p^{e''}} \in \ba_{p^{ne + e''}-1}$.  But that is easy since
\[
g h^{p^{e''}} \in \ba_{p^{e''}(p^{ne} - 1)} \ba_{p^{e''} - 1} \subseteq \ba_{p^{e''}(p^{ne} - 1) + (p^{e''} - 1)} = \ba_{p^{ne + e''} - 1}.
\]
as desired.
\end{proof}


\begin{example} \cite[Lemma 1.1]{LauritzenRabenThomsenGlobalFRegularityOfSchuertVarieties}, \cite{HaraWatanabeFRegFPure}
\label{ExampleFPurePairsDivisor}
Suppose $(X, \Delta)$ is a pair and that $\Delta$ is a reduced integral divisor.  Further suppose that $(X, \Delta)$ is $F$-pure.  We wish to apply the (natural transformation of) functors
$\sHom_X(\O_X(\Delta), \blank) \rightarrow \sHom_X(\O_X, \blank)$ to a splitting
\[
\xymatrix{
\O_X \ar[r] & \Fr{e}{\O_X} \ar[r] & \Fr{e}{\O_X((p^e - 1)\Delta)} \ar[r] & \O_X.
}
\]
Since $X$ is normal, $\sHom_X(\O_X(\Delta), \O_X) \cong \O_X(-\Delta)$.  Let us now consider the sheaf
\[
\sHom_X(\O_X(\Delta), \Fr{e}{\O_X((p^e - 1)\Delta )}).
\]
Note that
\begin{align*}
\sHom_X(\O_X(\Delta), \Fr{e}{\O_X((p^e - 1)\Delta )}) \cong \Fr{e}{\sHom_X((F^e)^* \O_X(\Delta), \O_X((p^e - 1)\Delta ))} \\
\cong \Fr{e}{\sHom(\O_X(p^e \Delta), \O_X((p^e - 1)\Delta ))} \cong \Fr{e}{\O_X(-\Delta )}
\end{align*}
where the first isomorphism is just adjointness, and the second and third occur because $\O_X((p^e - 1)\Delta )$ is reflexive and all the sheaves agree in codimension 1.  Similarly, we have $\sHom_X(\O_X(\Delta), \Fr{e}{\O_X}) \cong \Fr{e}{\O_X(-p^e \Delta)}$.  These observations, taken together, give us a commutative diagram
\[
\xymatrix{
\O_X(-\Delta) \ar[d] \ar[r] & \Fr{e}{\O_X(-p^e \Delta)} \ar[r] \ar[d] & \Fr{e}{\O_X( -\Delta )} \ar[r] \ar[d] & \O_X(-\Delta) \ar[d] \\
\O_X \ar[r] & \Fr{e}{\O_X} \ar[r] & \Fr{e}{\O_X((p^e - 1)\Delta )} \ar[r] & \O_X
}
\]
where the vertical arrows simply correspond to the natural inclusions of sheaves.  Thus one can conclude that if $(X, \Delta)$ is $F$-pure, then there exists a splitting
\[
\xymatrix{ \O_X(-\Delta) \ar[r] & \Fr{e}{\O_X( -\Delta )} \ar[r]  & \O_X(-\Delta) }
\]
compatible with some splitting of $\O_X$.  Conversely, if there exists such a splitting, then by applying $\sHom_X(\O_X(-\Delta), \blank)$ we can conclude that $(X, \Delta)$ is $F$-pure.
\end{example}

\subsection{``Big'' test ideals}
\label{SectionAppendixOnBigTestIdeals}

In a course taught at the University of Michigan in Fall 2007, Mel Hochster worked out the theory of ``big test elements'' and ``big test ideals''.  Roughly speaking, the big test elements are those elements of $R^{\circ}$ that multiply the (non-finitistic) tight closure of any module into itself.  From one point of view, this theory has been explored by Smith and Lyubeznik \cite{LyubeznikSmithCommutationOfTestIdealWithLocalization} and also by Takagi and Hara \cite{HaraTakagiOnAGeneralizationOfTestIdeals} when they were studying the object $\tld \tau(R) := \Ann_R(0^{*}_{E_R})$ but I do not know of a reference where Hochster's point of view is worked out.

\begin{definition}  \cite{TakagiInterpretationOfMultiplierIdeals}, \cite{HaraYoshidaGeneralizationOfTightClosure}, \cite{HaraACharacteristicPAnalogOfMultiplierIdealsAndApplications}, \cite{HochsterHunekeTC1}
Suppose $R$ is an $F$-finite reduced ring, $X = \Spec R$ and $(X, \Delta, \ba_{\bullet})$ is a triple.  Further suppose that $M$ is a (possibly non-finitely generated) $R$-module and that $N$ is a submodule of $M$.  We say that an element $z \in M$ is in the \emph{$(\Delta, \ba_{\bullet})$-tight closure of $N$ in $M$}, denoted $N^{*\Delta, \ba_{\bullet} }_M$, if there exists an element $c \in R^{\circ}$ such that, for all $e \gg 0$ and all $a \in \ba_{p^e - 1}$, the image of $z$ via the map
\[
\xymatrix{
\Fr{e}{i} \circ \Fr{e}{(\times c a)} \circ \ F^e : M \ar[r] & M \tensor_R \Fr{e}{R} \ar[r]^-{\Fr{e}{(\times c a)}} & M \tensor_R \Fr{e}{R} \ar[r] & M \tensor_R \Fr{e}{R(\lceil (p^e - 1)\Delta \rceil)} \\
}
\]
is contained in $N^{[q]\Delta}_M$, where we define $N^{[q] \Delta}_M$ to be the image of $N \tensor_R \Fr{e}{R(\lceil (p^e - 1)\Delta \rceil)}$ inside $M \tensor_R \Fr{e}{R(\lceil (p^e - 1)\Delta \rceil)}$.  For convenience, we use $c a z^{p^e}$ to denote $\Fr{e}{i} \circ \Fr{e}{(\times ca)} \circ  F^e(z)$.
\end{definition}

\begin{remark}
 Notice that the composition in the previous definition is obtained by simply taking the map $\xi : R \rightarrow \Fr{e}{R(\lceil (p^e - 1)\Delta \rceil)}$ which sends $1$ to $ca$, and then tensoring with $M$.
\end{remark}

\begin{definition}\cite[Lecture of September 17th]{HochsterFoundations}
An element $c \in R$ is called a \emph{big sharp test multiplier} for the triple $(R, \Delta, \ba_{\bullet})$ if for all $u \in N^{*\Delta, \ba_{\bullet}}_M$, all $e \geq 0$ and all $a \in \ba_{p^e - 1}$ one has $c a z^{p^e} = (\Fr{e}{i} \circ \Fr{e}{(\times ca )} \circ  F^e)(u) \in N^{[p^e] \Delta}_M$.

We say that such a $c$ is a \emph{big sharp test element} if, in addition, $c \in R^{\circ}$.
\end{definition}

We now show that big sharp test elements in this generality always exist, see \cite[Theorem 5.9]{HochsterHunekeFRegularityTestElementsBaseChange}, \cite{HaraYoshidaGeneralizationOfTightClosure} and \cite{TakagiInterpretationOfMultiplierIdeals}.

\begin{lemma}
\label{LemmaBigSharpTestElementsExist}
If $(R, \Delta, \ba_{\bullet})$ is a triple, then there exists a big sharp test element for the triple.
\end{lemma}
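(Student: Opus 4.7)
The proof follows the classical Hochster--Huneke existence argument for test elements, adapted both to the non-finitistic setting of Hochster's lectures and to the triple data $(R,\Delta,\ba_\bullet)$. The strategy is to locate an element $c \in R^\circ$ whose localization makes the triple trivial, and then show that some fixed power of $c$ multiplies every $(\Delta,\ba_\bullet)$-tight-closure witness into the appropriate submodule. This reduces the existence of a big sharp test element to a uniform denominator-clearing argument.

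To choose $c$: since $R$ is $F$-finite, reduced, and excellent, the regular locus of $\Spec R$ is open and dense, so there exists $c \in R^\circ$ with $R_c$ regular, $c$ vanishing along $\Supp \Delta$, and $c \in \ba_1 \cap R^\circ$ (nonempty by condition (c) of a triple). For such $c$ the localized triple $(R_c, 0, R_c)$ is trivially strongly $F$-regular. Given any $u \in N^{*\Delta,\ba_\bullet}_M$ and any tight-closure witness $c_u \in R^\circ$ for $u$, strong $F$-regularity of $R_c$ yields, for some $e_0 > 0$, an $R_c$-linear map $\phi_u : \Fr{e_0}{R_c} \to R_c$ sending $c_u$ to $1$. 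Clearing denominators exactly as in the proof of Lemma \ref{LemmaSplitAlongAndComplementImpliesFRegular}, and absorbing the divisorial and graded-system data, produces an $R$-linear map
\[
\tilde\phi_u : \Fr{e_0}{R(\lceil(p^{e_0}-1)\Delta\rceil)} \longrightarrow R
\]
sending $c_u g$ to $c^{k_u}$ for some $g \in \ba_{p^{e_0}-1}$ and some integer $k_u \geq 0$. Then for each $e \geq 0$ and $a \in \ba_{p^e-1}$, the product $ag^{p^e}$ lies in $\ba_{p^{e+e_0}-1}$, so the witness relation gives $c_u(ag^{p^e})u^{p^{e+e_0}} \in N^{[p^{e+e_0}]\Delta}_M$; applying $\Fr{e}{\tilde\phi_u}\otimes_R M$ collapses this to $c^{k_u} a u^{p^e} \in N^{[p^e]\Delta}_M$. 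The twist bookkeeping relies on the inequality $p^e\lceil(p^{e_0}-1)\Delta\rceil + \lceil(p^e-1)\Delta\rceil \geq \lceil(p^{e+e_0}-1)\Delta\rceil$ already used in the proof of Lemma \ref{LemmaSharplyFPureImpliesInfiniteSplittings}.

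The principal obstacle is uniformity: the exponent $k_u$ a priori depends on $u$, whereas a big sharp test element must act for \emph{every} $u \in N^{*\Delta,\ba_\bullet}_M$ (over all possibly infinitely generated $M$) with a single element. This is handled by observing that the denominator-clearing step is controlled by the finitely generated $R$-module $\Hom_R(\Fr{e_0}{R(\lceil(p^{e_0}-1)\Delta\rceil)}, R)$, which gives a uniform exponent $K$ independent of $u$; equivalently, one may pass to a faithfully flat complete local extension and identify the big test ideal with the annihilator of a single module $0^{*\Delta,\ba_\bullet}_{E_R}$, from which a uniform $K$ descends. Either way, $c^K \in R^\circ$ (since $c \in R^\circ$ and $R$ is reduced) is the desired big sharp test element, and this uniformity step is precisely what distinguishes the "big" existence theorem from its finitistic counterpart.
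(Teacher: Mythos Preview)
Your overall strategy matches the paper's: pick $c\in R^\circ$ trivializing the localized triple, produce for each tight-closure witness $d$ a map $\tilde\phi$ sending $d$ (times an element of $\ba_{p^{e_0}-1}$) to a power of $c$, and then push the relation $d\,\ba_{p^{e+e_0}-1}\,z^{p^{e+e_0}}\subseteq N^{[p^{e+e_0}]\Delta}_M$ down to level $e$ via $F^e_*\tilde\phi\otimes M$. That downward-push computation, including the graded-system bookkeeping $a g^{p^e}\in\ba_{p^{e+e_0}-1}$ and the $\Delta$-rounding inequality, is exactly what the paper does.

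The gap is in your uniformity step. Option (a) does not work as stated: finite generation of $H:=\Hom_R(F^{e_0}_*R(\lceil(p^{e_0}-1)\Delta\rceil),R)$ over $R$ gives no bound on the power of $c$ needed to clear an arbitrary element of $H_c$ back into $H$. Every element of $H_c$ has the form $h/c^n$ with $h\in H$, but $n$ is unbounded, and the particular map $\phi_u$ you need (the one sending $c_u\mapsto 1$ in $R_c$) may require arbitrarily large $n$ depending on $c_u$; moreover $e_0$ itself a priori varies with $c_u$. Option (b) is circular: identifying the big test ideal with $\Ann_R 0^{*\Delta,\ba_\bullet}_{E_R}$ is precisely Theorem~\ref{TheoremBigVsNonFinitisticTestIdeals}, whose proof (through Lemma~\ref{LemmaHaraTakagiCharacterizationOfTestIdeal}) already presupposes a big sharp test element.

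The paper resolves uniformity by invoking the classical Hochster--Huneke argument rather than by either of your routes: one fixes the exponent $m$ \emph{before} any witness $d$ enters, by clearing denominators once in a single splitting of the trivialized triple over $R_c$. The assertion is that for \emph{every} $d\in R^\circ$ and all sufficiently large and divisible $k$ there exist $a_k\in\ba_{p^k-1}$ and $\phi$ with $\phi(da_k)=c^m$ for this same $m$; the dependence on $d$ is absorbed into $k$, $a_k$, and $\phi$, never into the exponent. This is the substance of the ``standard arguments on the existence of test elements'' the paper cites, and it is what your sketch is missing.
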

\begin{proof}
Because $R$ is reduced, we can choose an element $c \in R^{\circ}$ such that the localized triple $(R_c, \Delta|_{\Spec R_c}, (\ba R_c)_{\bullet})$ is regular (in particular $(\ba R_c)_{i} = R_c$ for all $i$).  Then, using the standard arguments on the existence of test elements, for all $d \in R^{\circ}$ and all sufficiently large and divisible $k$, there exists $a_{k} \in \ba_{p^{k} - 1}$ and a map $\phi : F^{k}_* R(\lceil (p^{k} - 1) \Delta \rceil) \rightarrow R$ such that $\phi(da_{k}) = c^m$ for some fixed $m$ (note that $m$ does not depend on $d$).

Now suppose that $z \in N^{* \Delta, \ba_{\bullet}}_M$.  Thus we know that there exists $d \in R^{\circ}$ and $k > 0$ such that $d \ba_{p^{e+k} - 1}z^{p^{e+k}} \subset N^{[p^{e+k}] \Delta}_M$ for all $e \geq 0$.  Without loss of generality, we may assume the $k$ gives a map $\phi$ (such that $\phi(da_{k}) = c^m$ for some $a_{k} \in \ba_{p^{k} - 1}$) for this fixed $d$ as in the first paragraph of the proof.

Now, for each $e > 0$, $\phi$ induces a map
\[
\phi_{e} : \Fr{e+k}{R(\lceil (p^{e+k} - 1) \Delta \rceil )} \rightarrow \Fr{e}{R(\lceil (p^e - 1) \Delta \rceil)}
\]
such that $\phi(da_{k}) = c^m$ at the level of the total field of fractions.  We can tensor these maps with both $M$ and $N$ to obtain the following $\Fr{e}{R}$-linear diagram of maps
\[
\xymatrix{
 N^{[p^{e+k}] \Delta}_M \ar@{^{(}->}[d] \ar[r] & N^{[p^{e}] \Delta}_M \ar@{^{(}->}[d]\\
 M \tensor_R \Fr{e+k}{R(\lceil (p^{e+k} - 1) \Delta \rceil )} \ar[r]^-{\psi_e} & M \tensor_R \Fr{e}{R(\lceil (p^e - 1) \Delta \rceil)} \\
}
\]
Notice that we labeled the bottom horizontal arrow $\psi_e$.

Now, notice that
\[
d \ba_{p^e -1}^{[p^{k}]} a_{k} z^{p^{e+k}} \subseteq d \ba_{p^e -1}^{p^{k}} \ba_{p^{k} - 1} z^{p^{e+k}} \subseteq d \ba_{p^{e+k} - 1}z^{p^{e+k}} \subset N^{[p^{e+k}] \Delta}_M
\]
Therefore, when we apply $\psi_e$, we see that $c^m \ba_{p^e - 1} z^{p^e} = \psi_e(d \ba_{p^e -1}^{[p^{k}]} a_{k} z^{p^{e+k}}) \subseteq N^{[p^{e}] \Delta}_M$ for all $e \geq 0$ by the commutativity of the above diagram.  This implies that $c^m$ is a big sharp test element, as desired.
\end{proof}

\begin{remark}
\label{RemarkLocalizingTestElementsExist}
In the above proof, we can certainly choose $c$ such that $c \in \ba_1 \cap R^{\circ}$ and such that the support of $\Div(c)$ contains the support of $\Delta$.  In that case, $R_c$ is regular (and the terms $\ba_{\bullet}$ and $\Delta$ essentially go away).  I then claim that $c^m/1$ is still a big sharp test element after localization at any multiplicative subset $W$.

Now, for any $d \in (W^{-1} R)^{\circ}$, it is well known to experts that there is a $d' \in R^{\circ}$ such that $d'/s = d$ for some $s \in W$ (see for example \cite[Page 57]{HochsterFoundations}).  Alternately, if $R$ is a domain, then the statement is obvious.  Therefore, for each $d \in (W^{-1} R)^{\circ}$, at least in terms of testing tight closure, we may assume that $d = d'/1$ for $d' \in R^{\circ}$.  Thus, we may take the map $\phi : F^{k}_* R(\lceil (p^{k} - 1) \Delta \rceil) \rightarrow R$ such that $\phi(da_{k}) = c^m$ for some $a_{k} \in \ba_{p^{k} - 1}$ and then localize at the multiplicative subset $W$.  The proof that $c^m/1$ is a big sharp test element for $(W^{-1}R, \Delta|_{\Spec W^{-1} R}, (\ba W^{-1} R)_{\bullet})$ then follows just as above.
\end{remark}

\begin{definition}
The set of sharp test multipliers for the triple $(R, \Delta, \ba_{\bullet})$ form an ideal, which we call the \emph{big test ideal}, and denote by $\tau_b(\Delta, \ba_{\bullet})$.
\end{definition}

As mentioned before, there is another non-finitistic ``test-ideal'' which has been heavily studied in the last decade; see for example \cite{LyubeznikSmithCommutationOfTestIdealWithLocalization} or \cite{HaraTakagiOnAGeneralizationOfTestIdeals}.  This test ideal, usually denoted by $\tld \tau(\Delta, \ba_{\bullet})$, is obtained by taking the annihilator of the $(\Delta, \ba_{\bullet})$-tight closure of zero in the injective hull of the residue field (assuming $R$ is a local ring).  In \cite[Lecture of November 30th]{HochsterFoundations}, Hochster shows that $\tld \tau(R) \cong \tau_b(R)$ (also compare with \cite[Proposition 8.23(c)]{HochsterHunekeTC1}).  However, Hochster's approach doesn't work in the case of triples $(R, \Delta, \ba_{\bullet})$, so techniques of Takagi will be employed instead.  One should note that Hochster's argument does go through essentially unchanged for pairs $(R, \ba_{\bullet})$

We would like to prove that $\tau_b(\Delta, \ba_{\bullet}) = \tld \tau(\Delta, \ba_{\bullet}) := \Ann_R(0^{*\Delta, \ba_{\bullet}}_{E_R}$).  The difficulty is that when working with tight closure of pairs $(X, \Delta)$, one does not have an analogue of the easy fact $(N^{[p^e]}_M)^{[p^{e'}]}_{M \tensor_R \Fr{e}{R}} \cong N^{[p^{e+e'}]}_M$ (there is a map, but it goes the ``wrong way'' for our purposes).  Shunsuke Takagi has explained to me an alternate way around this problem, and what is recorded for the rest of the subsection is his argument.

First we recall the following result of Hara and Takagi.
\numberwithin{equation}{theorem}
\begin{lemma}\cite[Lemma 2.1]{HaraTakagiOnAGeneralizationOfTestIdeals}
\label{LemmaHaraTakagiCharacterizationOfTestIdeal}
 Suppose that $(R, \bm)$ is an $F$-finite local ring of characteristic $p > 0$ and that $X = \Spec R$.  Further suppose that $(X, \Delta, \ba_{\bullet})$ is a triple.  Fix a system of generators $x_1^{(e)}, \ldots, x_{r_e}^{(e)}$ of $\ba_{p^e - 1}$ for each $ e \geq 0$.  Then an element $c \in R$ is contained in $\tld \tau(\Delta, \ba_{\bullet}) := \Ann_R(0^{*\Delta, \ba_{\bullet}}_{E_R})$ if and only if, for some (equivalently any) big sharp test element $d \in R^{\circ}$ there exists an integer $e_1 > 0$ and there exist $R$-homomorphisms $\phi_i^{(e)} \in \Hom_R(\Fr{e}{R\lceil(p^e - 1) \Delta\rceil}, R)$ for $0 \leq e \leq e_1$ and $1 \leq i \leq r_e$ such that
\begin{equation}
\label{EquationHaraTakagi}
 c = \sum_{e = 0}^{e_1} \sum_{i = 1}^{r_e} \phi_i^{(e)}(d x_i^{(e)}).
\end{equation}
\end{lemma}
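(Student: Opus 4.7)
The plan is to identify $\tld\tau(\Delta, \ba_\bullet)$ with the ideal $I(d) \subseteq R$ generated by all evaluations $\phi(dx)$, where $e \geq 0$, $x \in \ba_{p^e - 1}$, and $\phi \in \Hom_R(\Fr{e}{R(\lceil (p^e - 1)\Delta \rceil)}, R)$. Every element of $I(d)$ admits a finite expression of the shape \eqref{EquationHaraTakagi} after expanding each chosen $x$ as an $R$-combination of the fixed generators $x_1^{(e)}, \ldots, x_{r_e}^{(e)}$ and absorbing the coefficients into the $\phi$'s, so the lemma amounts to the identity $I(d) = \tld\tau(\Delta, \ba_\bullet)$ for every big sharp test element $d$.

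First I would handle the easy inclusion $I(d) \subseteq \tld\tau(\Delta, \ba_\bullet)$. Write $M_e := \Fr{e}{R(\lceil (p^e - 1)\Delta \rceil)}$ and fix $z \in 0^{*\Delta, \ba_\bullet}_{E_R}$. Since $d$ is a big sharp test element, $d x_i^{(e)} z^{p^e} = 0$ in $E_R \tensor_R M_e$ for every $e$ and $i$. Tensoring any $\phi_i^{(e)}$ with $E_R$ produces an $R$-linear map $E_R \tensor_R M_e \to E_R$ that sends $d x_i^{(e)} z^{p^e}$ to $\phi_i^{(e)}(d x_i^{(e)}) \cdot z$; so every summand in \eqref{EquationHaraTakagi} annihilates $z$, and therefore $cz = 0$ for any $c$ of that form.

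For the reverse inclusion I would reduce to the case that $R$ is complete local. Both $I(d)$ and $0^{*\Delta, \ba_\bullet}_{E_R}$ behave well under completion thanks to $E_R = E_{\hat R}$, flatness, and the finite generation of each $M_e$. By the Matlis-duality identity $\Ann_R(\Ann_{E_R}(I)) = I$, valid for every ideal $I$ of a complete local ring, it now suffices to prove
\[
\Ann_{E_R}(I(d)) = 0^{*\Delta, \ba_\bullet}_{E_R}.
\]
The containment $\supseteq$ is a restatement of the easy inclusion. For $\subseteq$, observe that each $E_R \tensor_R M_e$ is Artinian (since $M_e$ is finite and $E_R$ is Artinian), and Hom--tensor adjunction together with the Matlis-duality identity $\Hom_R(E_R, E_R) = R$ yields a canonical isomorphism
\[
\Hom_R(E_R \tensor_R M_e, E_R) \;\cong\; \Hom_R(M_e, R),
\]
sending $\phi$ to the functional $z \tensor m \mapsto \phi(m) \cdot z$. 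Chasing the twist, the pairing of $d x z^{p^e}$ with $\phi$ evaluates to $\phi(d x) \cdot z$. Because these functionals separate points of the Artinian module $E_R \tensor_R M_e$, one concludes that $d x z^{p^e} = 0$ if and only if $\phi(d x) \cdot z = 0$ for every $\phi \in \Hom_R(M_e, R)$. Intersecting over all $e \geq 0$ and $x \in \ba_{p^e - 1}$, the condition ``$z \in \Ann_{E_R}(I(d))$'' is identified with ``$d x z^{p^e} = 0$ for all $e, x$,'' which, since $d \in R^\circ$ is itself a valid witness for tight closure, is precisely $z \in 0^{*\Delta, \ba_\bullet}_{E_R}$.

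The main obstacle is the Matlis-duality identification $\Hom_R(E_R \tensor_R M_e, E_R) \cong \Hom_R(M_e, R)$ together with the explicit verification that this pairing sends $d x z^{p^e}$ to $\phi(d x) \cdot z$; this is the heart of the argument. The reduction to the complete local case is routine given $F$-finiteness of $R$ and finite generation of the $M_e$. The ``some/any $d$'' phrasing is then automatic, since the final identity $I(d) = \tld\tau(\Delta, \ba_\bullet)$ does not depend on which big sharp test element is chosen.
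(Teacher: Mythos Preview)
The paper does not give its own proof here; it simply defers to \cite[Lemma 2.1]{HaraTakagiOnAGeneralizationOfTestIdeals}. Your argument is correct and is essentially the same Matlis-duality proof found in that reference: one shows that the ideal $I(d)$ generated by all $\phi(dx)$ has $\Ann_{E_R}(I(d))=0^{*\Delta,\ba_\bullet}_{E_R}$ by pairing $E_R\otimes_R M_e$ against $\Hom_R(M_e,R)$ via $\Hom_R(E_R,E_R)\cong\hat R$, and then recovers $I(d)$ from its $E_R$-annihilator. The only point you pass over a bit quickly is the reduction to the complete case: you need that $0^{*\Delta,\ba_\bullet}_{E_R}$ is the same submodule whether computed over $R$ or over $\hat R$, and this uses that the big sharp test element $d$ constructed in Lemma~\ref{LemmaBigSharpTestElementsExist} remains one after completion (the analogue of Remark~\ref{RemarkLocalizingTestElementsExist}); alternatively one can avoid the reduction entirely by noting that $\Hom_{\hat R}(M_e\otimes_R\hat R,\hat R)=\Hom_R(M_e,R)\otimes_R\hat R$ and that $I(d)\hat R\cap R=I(d)$ by faithful flatness.
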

\begin{proof}
The proof is the same as \cite[Lemma 2.1]{HaraTakagiOnAGeneralizationOfTestIdeals}, also compare with \cite{TakagiPLTAdjoint}.
\end{proof}

\begin{remark}
\label{RemarkDefinitionInNonlocal}
Fix a multiplicative system $W$.  By Remark \ref{RemarkLocalizingTestElementsExist}, we can construct a big sharp test element $d$ for $(R, \Delta, \ba_{\bullet})$ such that $d/1$ is simultaneously a big sharp test element for $(W^{-1} R, \Delta|_{\Spec W^{-1} R}, (\ba (W^{-1} R))_{\bullet})$.  Thus, working with that big sharp test element and the above lemma, it is easy to see that the construction of the ideal $\tld \tau(\Delta, \ba_{\bullet})$ commutes with localization since there are only finitely many conditions to check and since $\Hom$ commutes with localization.
This is what is done in \cite[Proposition 3.1]{HaraTakagiOnAGeneralizationOfTestIdeals} (note one does need the existence of some form of test elements that localize).

Therefore, it is natural to use the characterization of the previous lemma to define $\tld \tau(\Delta, \ba_{\bullet})$ in the case that $R$ is not a local ring (at least assuming you fix $d$ to be a big sharp test element as constructed in Remark \ref{RemarkLocalizingTestElementsExist} and Lemma \ref{LemmaBigSharpTestElementsExist}).
\end{remark}

We are now in a position to explain Takagi's argument relating $\tau_b(\Delta)$ with $\tld \tau(\Delta)$.  Because I do not know of a reference where this is published, I include the argument below.  Also compare with \cite[Lecture of November 30th]{HochsterFoundations} and \cite[Proposition 8.23(c)]{HochsterHunekeTC1}.

\begin{theorem}
\label{TheoremBigVsNonFinitisticTestIdeals}
 Suppose that $R$ is a ring of positive characteristic, and that $(R, \Delta, \ba_{\bullet})$ is a triple, in particular $R$ is $F$-finite.  Then $\tau_b(\Delta, \ba_\bullet) = \tld \tau(\Delta, \ba_\bullet)$.
\end{theorem}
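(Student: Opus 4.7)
The plan is to establish the two containments $\tau_b(\Delta,\ba_\bullet) \subseteq \tld\tau(\Delta,\ba_\bullet)$ and $\tld\tau(\Delta,\ba_\bullet) \subseteq \tau_b(\Delta,\ba_\bullet)$ separately, with essentially all of the work in the second.  The inclusion $\tau_b \subseteq \tld\tau$ is immediate from the definitions: for $c \in \tau_b$, specialize the big sharp test multiplier condition to $M = E_R$, $N = 0$, $e = 0$, and $a = 1 \in \ba_0 = R$.  Since $0^{[p^0]\Delta}_{E_R} = 0$, this forces $cz = 0$ for every $z \in 0^{*\Delta,\ba_\bullet}_{E_R}$, which is precisely $c \in \tld\tau$.

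For the reverse containment, I would use Lemma \ref{LemmaHaraTakagiCharacterizationOfTestIdeal} to write any $c \in \tld\tau$ as a finite sum $c = \sum_{e'=0}^{e_1}\sum_{i=1}^{r_{e'}} \phi_i^{(e')}(d\,x_i^{(e')})$, where $d$ is a ``good'' big sharp test element (chosen as in Remark \ref{RemarkLocalizingTestElementsExist}) and each $\phi_i^{(e')} \in \Hom_R(\Fr{e'}{R(\lceil(p^{e'}-1)\Delta\rceil)}, R)$.  Since $\tau_b$ is an ideal, it suffices to prove that each summand $\phi(dx)$, with $\phi = \phi_i^{(e')}$ and $x = x_i^{(e')}$, is itself a big sharp test multiplier.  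So fix $\phi, x, e'$ together with $z \in N^{*\Delta,\ba_\bullet}_M$, $e \geq 0$, and $a \in \ba_{p^e-1}$; the goal is $\phi(dx)\,a\,z^{p^e} \in N^{[p^e]\Delta}_M$.

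The main geometric idea is to construct from $\phi$ an induced map at higher Frobenius level, in the spirit of the twist used in the proof of Lemma \ref{LemmaSharplyFPureImpliesInfiniteSplittings}.  Applying $\Hom_R(R(-\lceil(p^e-1)\Delta\rceil), \blank)$ to $\phi$ and invoking the inequality
\[
p^{e'}\lceil(p^e-1)\Delta\rceil + \lceil(p^{e'}-1)\Delta\rceil \;\geq\; \lceil(p^{e+e'}-1)\Delta\rceil
\]
produces an $R$-linear map $\phi' \colon \Fr{e'}{R(\lceil(p^{e+e'}-1)\Delta\rceil)} \to R(\lceil(p^e-1)\Delta\rceil)$ that agrees with $\phi$ at the level of the total field of fractions.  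Applying $\Fr{e}{\blank}$ to $\phi'$ and tensoring with $M$ yields an $R$-linear map
\[
\Psi \colon M \tensor_R \Fr{e+e'}{R(\lceil(p^{e+e'}-1)\Delta\rceil)} \longrightarrow M \tensor_R \Fr{e}{R(\lceil(p^e-1)\Delta\rceil)}
\]
that restricts to $N^{[p^{e+e'}]\Delta}_M \to N^{[p^e]\Delta}_M$.  Now the graded relation $\ba_{p^{e'}-1}\,\ba_{p^e-1}^{p^{e'}} \subseteq \ba_{p^{e+e'}-1}$ puts $xa^{p^{e'}} \in \ba_{p^{e+e'}-1}$, so the defining property of the big sharp test element $d$ places $dx\,a^{p^{e'}}z^{p^{e+e'}}$ in $N^{[p^{e+e'}]\Delta}_M$; and applying $\Psi$, together with the $F^{e'}$-linearity identity $\phi'(a^{p^{e'}} \cdot dx) = a\,\phi(dx)$, carries this element to $\phi(dx)\,a\,z^{p^e}$, as desired.

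The main obstacle I anticipate is careful bookkeeping: verifying the ceiling inequality above with the usual reflexivity/codimension-one manoeuvres on twisted Hom-sheaves (so that the $\Hom$ really identifies with the expected twisted module), and confirming that the $R$-action on $\Fr{e'}{R(-)}$ via $F^{e'}$ delivers the identity $\phi'(a^{p^{e'}} \cdot dx) = a\,\phi(dx)$.  Once both are pinned down, the rest is a formal diagram chase.  In the non-local case the same scheme works after localizing at each maximal ideal, since by Remark \ref{RemarkDefinitionInNonlocal} both $\tau_b$ and $\tld\tau$ commute with localization.
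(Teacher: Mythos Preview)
Your proposal is correct and follows essentially the same route as the paper's proof: the easy containment is dispatched by specializing to $M=E_R$, and the hard direction uses the Hara--Takagi decomposition of $c$ (Lemma~\ref{LemmaHaraTakagiCharacterizationOfTestIdeal}), the twisted-$\Hom$ construction to push each $\phi_i^{(e')}$ up to level $e+e'$, and the graded-system inclusion $\ba_{p^{e'}-1}\,\ba_{p^e-1}^{p^{e'}}\subseteq\ba_{p^{e+e'}-1}$ to feed the big sharp test element property of $d$. The only cosmetic difference is that you argue each summand $\phi(dx)$ lies in $\tau_b$ individually (using that $\tau_b$ is an ideal), whereas the paper sums first and then verifies the test-multiplier condition for $c$ directly; the two are equivalent.
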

The following argument is due to Shunsuke Takagi.
\begin{proof}
Since the containment $\subseteq$ is obvious, let us choose $c \in \tld \tau(\Delta, \ba_{\bullet})$ and also fix $e \geq 0$.  Fix $N \subseteq M$ to be a containment of $R$-modules and suppose that $z \in N^{* \Delta, \ba_{\bullet}}_M$.  Thus for any big sharp test element $d \in R^{\circ}$, we have that $d a z^{p^{e'}} \in N^{[p^{e'}] \Delta}_M$ for all $e' \geq 0$ and all $a \in \ba_{p^{e'} - 1}$.  Fix a system of generators $x_1^{(k)}, \ldots, x_{r_k}^{(k)}$ of $\ba_{p^k - 1}$ for each $k \geq 0$.  By Lemma \ref{LemmaHaraTakagiCharacterizationOfTestIdeal}, there exists $e_1 > 0$ and maps $\phi_i^{(k)} \in \Hom_R(\Fr{k}{R (\lceil (p^k - 1) \Delta \rceil)}, R)$ for $0 \leq k \leq e_1$ and $1 \leq i \leq r_k$ such that
\[
c = \sum_{k = 0}^{e_1} \sum_{i = 1}^{r_k} \phi_i^{(k)}(d x_i^{(k)}).
\]
Each $\phi_i^{(k)}$ induces a map
\[
\Fr{e+k}{R(\lceil (p^{e+k} - 1) \Delta \rceil )} \rightarrow \Fr{e}{R(\lceil (p^e - 1) \Delta \rceil)},
\]
compare with \cite[Proof of Lemma 2.5]{TakagiInterpretationOfMultiplierIdeals}.  We can tensor this map with both $M$ and $N$ to obtain the following $\Fr{e}{R}$-linear diagram of maps
\[
\xymatrix{
 N^{[p^{e+k}] \Delta}_M \ar@{^{(}->}[d] \ar[r] & N^{[p^{e}] \Delta}_M \ar@{^{(}->}[d]\\
 M \tensor_R \Fr{e+k}{R(\lceil (p^{e+k} - 1) \Delta \rceil )} \ar[r]^-{\psi_i^{(k,e)}} & M \tensor_R \Fr{e}{R(\lceil (p^e - 1) \Delta \rceil)} \\
}
\]
Notice that we labeled the bottom horizontal arrow $\psi_i^{(k,e)}$.

Now fix an arbitrary $a \in \ba_{p^e - 1}$ and note that $\psi_i^{(k,e)}$ sends $d x_i^{(k)} a^{p^k} z^{p^{e+k}}$ to $\phi_i^{(k)}(d x_i^{(k)}) a z^{p^e}$.  But also observe that
\[
x_i^{(k)} a^{p^k} \in \ba_{p^k - 1} \ba_{p^e - 1}^{p^k} =  \ba_{p^k - 1} \ba_{p^{e + k} - p^k} \subseteq \ba_{p^{e + k} - 1}.
\]
Thus, $d x_i^{(k)} a^{p^k} z^{p^e+k} \in N^{[p^{e+k}] \Delta}_M$ since $e + k \geq 0$.  By the commutativity of the above diagram, this implies that
\[
\phi_i^{(k)}(d x_i^{(k)}) a z^{p^e} \in N^{[p^{e}] \Delta}_M.
\]
Therefore, we obtain that
\[
 c a z^{p^e} = \sum_{k={0}}^{e_1} \sum_{i = 1}^{r_k} \phi_i^{(k)}(d x_i^{(k)}) a z^{p^e} \in N^{[p^e] \Delta}_M \text{ for all $e \geq 0$ }
\]
as desired.
\end{proof}

We also obtain the following corollary (which is straightforward to prove directly in the case of a pair $(R, \ba_{\bullet})$ or in the classical (non-pair) setting).

\begin{corollary}
 Suppose that $R$ is a ring, $X = \Spec R$ and that $(X, \Delta, \ba_{\bullet})$ is a pair.  Then $\tau_b(\Delta, \ba_{\bullet}) = \bigcap_{N \subseteq M} (N : N^{*\Delta, \ba_{\bullet}}_M)$ where the intersection runs over all $R$-modules $N \subseteq M$.
\end{corollary}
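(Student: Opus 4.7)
The inclusion $\tau_b(\Delta, \ba_\bullet) \subseteq \bigcap_{N \subseteq M} (N : N^{*\Delta, \ba_\bullet}_M)$ is immediate: given a big sharp test multiplier $c$ and any $u \in N^{*\Delta, \ba_\bullet}_M$, I would apply the defining condition with $e = 0$ and $a = 1 \in \ba_0 = R$. Since $F^0$ is the identity and $R(\lceil 0 \cdot \Delta\rceil) = R$, the composition $\Fr{0}{i} \circ \Fr{0}{(\times c)} \circ F^0$ is just multiplication by $c$, and $N^{[p^0]\Delta}_M = N$, so the condition collapses to $cu \in N$.

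For the reverse inclusion, let $c \in \bigcap_{N \subseteq M}(N : N^{*\Delta, \ba_\bullet}_M)$. In the local case, I would apply the hypothesis to the single distinguished pair $N = 0 \subseteq M = E_R$, the injective hull of the residue field. This yields $c \cdot 0^{*\Delta, \ba_\bullet}_{E_R} = 0$, so
\[
c \in \Ann_R(0^{*\Delta, \ba_\bullet}_{E_R}) = \tld\tau(\Delta, \ba_\bullet) = \tau_b(\Delta, \ba_\bullet),
\]
where the last equality is Theorem~\ref{TheoremBigVsNonFinitisticTestIdeals}.

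For general $R$, I would reduce to the local case at each maximal ideal $\bm$. Both sides of the desired equality commute with localization: the big test ideal by Remark~\ref{RemarkDefinitionInNonlocal}, and the intersection because for any $R_\bm$-module pair $N' \subseteq M'$ the tensor products $M' \otimes_R \Fr{e}{R(\lceil(p^e-1)\Delta\rceil)}$ and $M' \otimes_{R_\bm} \Fr{e}{R_\bm(\lceil(p^e-1)\Delta|_\bm\rceil)}$ are naturally isomorphic and test elements can be chosen from $R^\circ$ by Remark~\ref{RemarkLocalizingTestElementsExist}, so $(N')^{*\Delta, \ba_\bullet}_{M'}$ computed over $R$ agrees with the one computed over $R_\bm$. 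The local case then places the image of $c$ in $\tau_b(\Delta, \ba_\bullet)_\bm$ for every $\bm$, hence $c \in \tau_b(\Delta, \ba_\bullet)$. The substantive input here is Theorem~\ref{TheoremBigVsNonFinitisticTestIdeals}; beyond that the content of the corollary is the observation that the single pair $(0, E_R)$ already witnesses the entire intersection, so no step presents a real obstacle.
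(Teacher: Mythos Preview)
Your proof is correct and follows essentially the same approach as the paper: the forward inclusion comes from the $e=0$ case of the test-multiplier condition, and the reverse inclusion follows because the single pair $(0, E_R)$ already gives $\tld\tau(\Delta,\ba_\bullet)$, which equals $\tau_b(\Delta,\ba_\bullet)$ by Theorem~\ref{TheoremBigVsNonFinitisticTestIdeals}. The paper's two-line proof leaves the non-local reduction implicit; you spell it out, which is a harmless elaboration rather than a different route.
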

\begin{proof}
It is clear that $\tau_b(\Delta, \ba_{\bullet}) \subseteq \bigcap_{N \subseteq M} (N : N^{*\Delta, \ba_{\bullet}}_M)$.  On the other hand, $\bigcap_{N \subseteq M} (N : N^{*\Delta, \ba_{\bullet}}_M) \subseteq \tld \tau(\Delta, \ba_{\bullet})$.
\end{proof}

\begin{remark}
Using the same arguments as in \cite[Proposition 3.1, Proposition 3.2]{HaraTakagiOnAGeneralizationOfTestIdeals} (at least once one has a big sharp test element that is still a big sharp test element after localization and completion), one sees that the formation of $\tau_b(\Delta, \ba_{\bullet})$ whose formation commutes with localization and completion.
\end{remark}

\subsection{Characteristic zero singularities}
\label{SubsectionCharacteristicZeroSingularities}

In this subsection, we give a brief description of log canonical singularities and centers of log canonicity.  For a more complete introduction, please see \cite{KollarSingularitiesOfPairs} or \cite{KollarMori}.  We will work with pairs $(X = \Spec R, \Delta)$ in the characteristic zero setting.

Suppose that $(X, \Delta)$ is a pair such that $X$ is a normal affine scheme of finite type over a field of characteristic zero and $K_X + \Delta$ is $\bQ$-Cartier.  By a \emph{log resolution of $(X, \Delta)$}, we mean a proper birational morphism $\pi : \tld X \rightarrow X$ that satisfies the following conditions.
\begin{itemize}
\item[(i)]  $\pi$ is proper and birational,
\item[(ii)]  $\tld X$ is smooth,
\item[(iii)]  The support of the divisor $\pi^{-1}_* \Delta$ is a smooth subscheme of $\tld X$ (see \cite{KollarMori} for more discussion of $\pi^{-1}_* \Delta$, the \emph{strict transform of $\Delta$}),
\item[(iv)]  The divisor $\Supp(\pi^* \Delta) \cup \exc(\pi)$ has simple normal crossings.
\end{itemize}
Log resolutions always exist if $X$ is of finite type over a field of characteristic zero by \cite{HironakaResolution}.
After picking such a log resolution $\pi : \tld X \rightarrow X$, we can fix a canonical divisor $K_{\tld X}$ on $\tld X$ and we may assume (by changing $K_X$ within its linear equivalence class if needed) that $\pi_* K_{\tld X} = K_X$.  Since $K_X + \Delta$ is $\bQ$-Cartier, there exists an integer $m > 0$ such that $m(K_X + \Delta)$ is Cartier, and so we use the notation $\pi^*(K_X + \Delta)$ to denote the $\bQ$-divisor ${1 \over m} \pi^*(m(K_X + \Delta))$.  Thus we can write
\[
K_{\tld X} = \pi^*(K_X + \Delta) + \sum_i^n a_i E_i
\]
where the $E_i$ are exceptional divisors or components of the strict transform $\pi^{-1}_* \Delta$, and the $a_i$ are rational numbers.  The $\bQ$-divisor $\sum_i^n a_i E_i$ is independent of the choice of representative for $K_{\tld X}$.

\begin{definition}
We say that a pair $(X, \Delta)$ (as above, with $K_X + \Delta$ $\bQ$-Cartier) has \emph{log canonical singularities} if, for every (equivalently for a single) log resolution, the numbers $a_i$, which are called \emph{discrepancies}, are all bigger than or equal to $-1$.  If all the discrepancies are strictly bigger than $-1$ (which implies that $\lfloor \Delta \rfloor = 0$), then we say that $(X, \Delta)$ is \emph{Kawamata log terminal}.
\end{definition}

\begin{definition}
\label{DefinitionCenterOfLogCanonicity}
Given a pair $(X, \Delta)$ as above, we say that a (possibly non-closed) point $P \in X$ is a \emph{center of log canonicity for $(X, \Delta)$} (or equivalently, a \emph{log canonical center for $(X, \Delta)$}) if there exists a log resolution $\pi$ and an divisor $E_i$ on $X$ dominating $P$ and such that the coefficient $a_i$ of $E_i$ satisfies $a_i \leq -1$.
\end{definition}

\begin{remark}
As stated in the introduction, it is not difficult to see that a (possibly non-closed) point $Q \in X = \Spec R$ is center of log canonicity for $(X, \Delta)$ if and only if for every $f \in R^{\circ}$ and every rational $\epsilon > 0$, one has that the localized pair  $(X_Q, \Delta_Q + \epsilon \Div(f)_Q)$, is not log canonical.  It is this characterization that we will use to inspire our characteristic $p > 0$ analogue of centers of log canonicity.
\end{remark}



\begin{remark}
 Notice that $(X, \Delta)$ is Kawamata log terminal if and only if it contains no centers of log canonicity.
\end{remark}

\begin{remark}
In many cases, centers of log canonicity are only defined for log canonical pairs, in which case the requirement of Definition \ref{DefinitionCenterOfLogCanonicity} reduces to $a_i = -1$ (instead of $a_i \leq -1$).
\end{remark}

\begin{remark}
A large amount of the interest in these singularities has been under the hypothesis that $X$ is projective (and not affine).  In this paper however, we restrict ourselves to the affine case.
\end{remark}

There is a further generalization of log canonical singularities (without the normality hypothesis) that we will need.

\begin{definition} \cite{KollarShepherdBarron}, \cite{KollarFlipsAndAbundance}, \cite{KovacsSchwedeSmithLCImpliesDuBois}
Let $X$ be an reduced affine seminormal \stwo{} scheme and set $D$ to be the effective divisor of the conductor ideal on $X^N$ (the normalization of $X$).  Note that $D$ is a reduced divisor by \cite[Lemma 7.4]{GrecoTraversoSeminormal} and \cite[Lemma
  1.3]{TraversoPicardGroup}.  We say that $X$ has \emph{weakly semi-log canonical} singularities if the pair $(X^N, D)$ is log canonical.
\end{definition}

Finally, we define a class of singularities in characteristic zero by reduction to characteristic $p > 0$.  See \cite{HochsterRobertsFrobeniusLocalCohomology}, \cite{HochsterHunekeTightClosureInEqualCharactersticZero} and \cite{HaraWatanabeFRegFPure} for discussions of the process of reduction to characteristic $p > 0$.

\begin{definition}
We say that $(X, \Delta)$ has \emph{dense sharply $F$-pure type} if, after reduction to a family of characteristic $p > 0$ models $(X_p, \Delta_p)$ of $(X, \Delta)$, a Zariski-dense set of those models are sharply $F$-pure.
\end{definition}




\section{Uniformly $F$-compatible ideals}
\label{SectionCOnsistentlyFCompatibleIdeals}

In this section, we consider ideals of $R$ that are sent back into themselves via every map (or, in the case of pairs, a certain collection of maps) $\Fr{e}{R} \rightarrow R$.  We call these ideals \emph{uniformly $F$-compatible} in order to remind readers of the connection with compatibly $F$-split subschemes (ideals) as defined by Mehta and Ramanathan; see \cite{MehtaRamanathanFrobeniusSplittingAndCohomologyVanishing} and also Definition \ref{DefinitionCompatiblyFSplit}.  We will also see that these objects are essentially dual to the $\mathcal{F}(E_R(k))$-submodules of $E_R(k)$ as defined in \cite{LyubeznikSmithCommutationOfTestIdealWithLocalization}.  We will not rely on Lyubeznik and Smith's of view for the remainder of the paper, but we will point out relations as they appear.  In particular, some statements in the following section will partially overlap with results of \cite{LyubeznikSmithCommutationOfTestIdealWithLocalization}.

\begin{definition}
\label{DefinitionUniformlyFCompatible}
Suppose $R$ is $F$-finite and that, $(R, \Delta, \ba_{\bullet})$ is a triple.   We say that $I$ is \emph{uniformly $(\Delta, \ba_{\bullet}, F)$-compatible} if, for every $e > 0$, every $a \in \ba_{p^e-1}$ and every $R$-linear map
\[
\phi : \Fr{e}{R( \lceil (p^e - 1) \Delta \rceil) } \rightarrow R,
\]
we have $\phi(\Fr{e}{\left( a I \right)} ) \subseteq I$ (equivalently, we can require that $\phi(F^e_* \ba_{p^e - 1} I) \subseteq I$).  Notice that $\Fr{e}{\left( a I \right)} \subseteq \Fr{e}{R} \subseteq \Fr{e}{R( \lceil (p^e - 1) \Delta \rceil) }$.  If $\ba_i = R$ for every $i \geq 0$ and $\Delta = 0$, then we say simply that $I$ is \emph{uniformly $F$-compatible}.
\end{definition}


\begin{remark}
In the case that $\Delta = 0$ and $\ba_i = R$ for all $i \geq 0$, uniformly $F$-compatible ideals can be thought of as the ideals $I \subset R$ such that every map $\Fr{e}{R} \rightarrow R$ induces a map $\Fr{e}{R/I} \rightarrow R/I$.
\end{remark}

\begin{corollary}
\label{CorUniformlyFCompatibleDefinesFPureSubscheme}
If $(R, \Delta, \ba_{\bullet})$ is sharply $F$-pure and $I$ is uniformly $(\Delta, \ba_{\bullet},F)$-compatible, then $I$ defines an $F$-pure subscheme (and in particular $I = \sqrt{I}$).
\end{corollary}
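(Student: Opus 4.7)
The plan is to extract, from the sharp $F$-purity data together with the uniform compatibility of $I$, an ordinary Frobenius splitting of $R/I$; reducedness will then follow from the fact that $F$-pure rings are reduced.

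First I would unpack sharp $F$-purity (Definition \ref{DefinitionSharplyFPurePair}): fix $e > 0$, an element $d \in \ba_{p^e-1}$, and an $R$-linear map
\[
\phi : \Fr{e}{R(\lceil (p^e - 1)\Delta\rceil)} \longrightarrow R
\]
splitting the composite $\xi$, which sends $r$ to $F^e_*(dr^{p^e})$; in particular $\phi(F^e_* d) = 1$. Restricting $\phi$ along the natural inclusion $\Fr{e}{R} \hookrightarrow \Fr{e}{R(\lceil (p^e-1)\Delta\rceil)}$, I would define
\[
\psi : \Fr{e}{R} \longrightarrow R, \qquad \psi(F^e_* x) := \phi(F^e_*(dx)),
\]
which is meaningful because $dx \in R$. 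A direct computation using the $R$-linearity of $\phi$ shows $\psi$ is $R$-linear, and $\psi(F^e_* r^{p^e}) = \phi(F^e_*(d r^{p^e})) = r \cdot \phi(F^e_* d) = r$ shows $\psi$ splits the $e$-iterated Frobenius $R \to \Fr{e}{R}$.

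Next I would invoke the uniform $(\Delta, \ba_\bullet, F)$-compatibility of $I$ with the \emph{specific} choice $a = d \in \ba_{p^e-1}$: Definition \ref{DefinitionUniformlyFCompatible} gives $\phi(F^e_*(dI)) \subseteq I$, which is exactly $\psi(F^e_*(I)) \subseteq I$. Hence $\psi$ descends to an $R/I$-linear map $\bar\psi : \Fr{e}{(R/I)} \to R/I$ splitting the Frobenius of $R/I$. Therefore $R/I$ is $F$-pure, and since $F$-pure rings are reduced, $I = \sqrt{I}$.

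The only real step requiring attention, rather than a genuine obstacle, is the observation that the element $d$ witnessing the sharp $F$-pure splitting itself lies in $\ba_{p^e-1}$, so it is a legal input to the compatibility condition; once that is noticed, the rest is a formal manipulation of the splitting map.
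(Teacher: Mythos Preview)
Your proof is correct and follows essentially the same approach as the paper: the paper displays a three-row commutative diagram whose middle row is exactly your map $\psi = \phi\circ F^e_*(\times a)$ (with $a$ playing the role of your $d$), and the top row records your observation $\psi(F^e_* I)\subseteq I$ coming from uniform compatibility. The only difference is presentational---you write the map out explicitly while the paper packages it into a diagram.
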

\begin{proof}
Fix some $e > 0$, an element $a \in \ba_{p^e-1}$ and fix a map $\phi : \Fr{e}{R(\lceil (p^e - 1) \Delta \rceil)} \rightarrow R$ such that $\phi(a) = 1$. We have the following diagram with split rows:
\[
\xymatrix{
I \ar[r] \ar[d] & \Fr{e}{I} \ar[rrr] \ar[d] & & & I \ar[d] \\
R \ar[r] \ar[d] & \Fr{e}{R }\ar[r]^-{\Fr{e}{(\times a)}} \ar[d] &  \Fr{e}{R} \ar[r] & \ar[r]^-{\phi} \Fr{e}{R(\lceil (p^e - 1) \Delta \rceil)}  & R \ar[d] \\
R/I \ar[r] & \Fr{e}{R/I} \ar[rrr] & & & R/I
}
\]
This implies that $R/I$ is $F$-pure, and in particular that $I$ is a radical ideal.
\end{proof}

\begin{remark}
It might be tempting to try to conclude that $(R/I, (\ba_{\bullet} R/I))$ is a sharply $F$-pure pair, but that does not follow since the map $\phi$ in the proof above does not necessarily send $\Fr{e}{I}$ into $I$.  However, if $I$ is also uniformly $F$-compatible simply for the ring $R$ (without any pair or triple), and $(R, \ba_{\bullet})$ is sharply $F$-pure, then it follows that $(R/I, (\ba_{\bullet} R/I))$ is sharply $F$-pure.  Similar statements can be made for triples $(R, \Delta, \ba_{\bullet})$.
\end{remark}

We also have the following lemmas, which will be useful later.

\begin{lemma}
\label{EasyPropertiesOfUniformlyFCompatibleIdeals}
Suppose $R$ is $F$-finite, $(R,  \Delta,\ba_{\bullet})$ is a triple and that $\{I_i\}$ is a collection of uniformly $(\Delta, \ba_{\bullet},F)$-compatible ideals.  Then $\bigcap_i I_i$ and $\Sigma_i I_i$ are uniformly $(\Delta, \ba_{\bullet}, F)$-compatible.
\end{lemma}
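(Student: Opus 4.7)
The plan is to prove both statements directly from the definition, exploiting only the $R$-linearity of the given maps $\phi : \Fr{e}{R(\lceil (p^e-1)\Delta\rceil)} \to R$ together with the hypothesis that each individual $I_i$ is uniformly $(\Delta,\ba_\bullet,F)$-compatible. I do not expect any genuine obstacle here; both parts are formal consequences of the definition, so the proof will be short.

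For the intersection $J := \bigcap_i I_i$, fix $e > 0$, $a \in \ba_{p^e-1}$, and an $R$-linear map $\phi : \Fr{e}{R(\lceil (p^e-1)\Delta\rceil)} \to R$. For any index $j$ we have $J \subseteq I_j$, hence $\Fr{e}{(aJ)} \subseteq \Fr{e}{(aI_j)}$, and applying $\phi$ gives
\[
\phi\bigl(\Fr{e}{(aJ)}\bigr) \subseteq \phi\bigl(\Fr{e}{(aI_j)}\bigr) \subseteq I_j,
\]
using the uniform $F$-compatibility of $I_j$. Taking the intersection over $j$ yields $\phi(\Fr{e}{(aJ)}) \subseteq J$, as desired.

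For the sum $K := \sum_i I_i$, fix $e$, $a$, and $\phi$ as above. Any element of $aK$ has the form $\sum_{\text{finite}} a x_i$ with $x_i \in I_i$, so an arbitrary element of $\Fr{e}{(aK)}$ can be written as $\sum_{\text{finite}} \Fr{e}{(ax_i)}$. By the $R$-linearity of $\phi$,
\[
\phi\Bigl( \sum_{\text{finite}} \Fr{e}{(ax_i)} \Bigr) = \sum_{\text{finite}} \phi\bigl(\Fr{e}{(ax_i)}\bigr),
\]
and each summand lies in $I_i$ by uniform $F$-compatibility of $I_i$. Hence the total sum lies in $K$, proving that $K$ is uniformly $(\Delta,\ba_\bullet,F)$-compatible.

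The only mildly subtle point worth flagging is the set-theoretic handling of possibly infinite index sets: for the sum one must interpret $\sum_i I_i$ as the ideal of finite $R$-linear combinations and observe that $\Fr{e}{(a\sum_i I_i)} = \sum_i \Fr{e}{(aI_i)}$ as $R$-submodules, after which the $R$-linearity of $\phi$ passes the sum through termwise. No noetherian or finiteness hypothesis on the family $\{I_i\}$ is needed.
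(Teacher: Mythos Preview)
Your proof is correct and follows essentially the same argument as the paper: both parts are handled by fixing $e$, $a$, and $\phi$, then using the inclusion $J\subseteq I_j$ for the intersection and additivity of $\phi$ on a finite-sum representative for the sum. Your extra remark about infinite index sets is a minor clarification but does not change the approach.
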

\begin{proof}
Fix a map
\[
\phi : \Fr{e}{R(\lceil (p^e - 1) \Delta \rceil)} \rightarrow R.
\]
First choose $x \in \bigcap_i I_i$.  Then by assumption, for every $a \in \ba_{p^e-1}$, we have $\phi(a x) \in I_i$.  This implies that for an arbitrary (but fixed) $a \in \ba_{p^e - 1}$, we have $\phi(ax) \in \bigcap_i I_i$.  Likewise, consider $x_{i_1} + \ldots + x_{i_n} \in \Sigma_i I_i$ (where $x_{i_j} \in I_{i_j}$).  Then $\phi(a(x_{i_1} + \ldots + x_{i_n})) = \phi(ax_{i_1}) + \ldots + \phi(ay_{i_n}) \in \Sigma_i I_i$ as desired.
\end{proof}

It is also not difficult to see that uniformly $F$-compatible ideals localize well; compare with \cite[Proposition 5.3]{LyubeznikSmithCommutationOfTestIdealWithLocalization}.

\begin{lemma}
\label{UniformlyFCompatibleIdealsLocalize}
Suppose that $R$ is $F$-finite, $I \subseteq R$ is an ideal and $(R, \Delta, \ba_{\bullet})$ is a triple.  Further suppose that $T \subset R$ is a multiplicative system. If $I$ is uniformly $(\Delta, \ba_{\bullet}, F)$-compatible, then $T^{-1} I \subseteq T^{-1}R$ is uniformly $(\Delta|_{\Spec T^{-1}R}, (T^{-1} \ba)_{\bullet}, F)$-compatible.
\end{lemma}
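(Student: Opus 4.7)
The plan is to show that any $T^{-1}R$-linear map
\[
\phi : \Fr{e}{(T^{-1}R)(\lceil (p^e-1)\Delta|_{\Spec T^{-1}R}\rceil)} \to T^{-1}R
\]
arises, up to multiplication by a unit in $T$, as the localization of an $R$-linear map on the un-localized Frobenius push-forward, and then to reduce the claim to the assumed uniform $(\Delta, \ba_{\bullet}, F)$-compatibility of $I$ over $R$.

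First, since $R$ is $F$-finite, the module $M := \Fr{e}{R(\lceil (p^e-1)\Delta\rceil)}$ is finitely generated (in fact coherent) as an $R$-module. Hence $\Hom_R(M, \blank)$ commutes with localization at $T$, which identifies $T^{-1}\Hom_R(M, R)$ with $\Hom_{T^{-1}R}(T^{-1}M, T^{-1}R)$. Therefore we may write $\phi = (1/u)\cdot(T^{-1}\psi)$ for some $\psi \in \Hom_R(M, R)$ and some $u \in T$, using the natural identification $T^{-1}(\Fr{e}{R}) \cong \Fr{e}{(T^{-1}R)}$. Multiplying the output by the unit $1/u \in T^{-1}R$ does not affect whether it lands in $T^{-1}I$, so it suffices to show that $T^{-1}\psi$ sends $\Fr{e}{(T^{-1}\ba_{p^e-1}\cdot T^{-1}I)}$ into $T^{-1}I$.

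A typical element of $T^{-1}\ba_{p^e-1}\cdot T^{-1}I$ has the form $x/t$ with $x \in \ba_{p^e - 1} I$ and $t \in T$. The key bookkeeping step is to use the $T^{-1}R$-module structure on Frobenius push-forward (where $1/t$ scales by $1/t^{p^e}$) to rewrite
\[
\Fr{e}{(x/t)} \;=\; (1/t)\cdot \Fr{e}{(x\, t^{p^e-1})},
\]
thereby reducing everything to the image of $\Fr{e}{R}$ inside $\Fr{e}{(T^{-1}R)}$. Applying the $T^{-1}R$-linear map $T^{-1}\psi$ yields
\[
(T^{-1}\psi)\bigl(\Fr{e}{(x/t)}\bigr) \;=\; (1/t)\cdot \psi\bigl(\Fr{e}{(x\, t^{p^e-1})}\bigr).
\]
Since $x t^{p^e-1} \in \ba_{p^e-1} I$ and $I$ is uniformly $(\Delta, \ba_{\bullet}, F)$-compatible for $R$, the numerator lies in $I$, so the whole expression lies in $T^{-1}I$, as required.

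The only subtle ingredient is the commutation of $\Hom$ with localization, which rests on $M$ being finitely generated — precisely the place where $F$-finiteness of $R$ is used. Once $\phi$ has been descended to a map over $R$, everything else is a routine computation tracking how $T$-denominators interact with the Frobenius-twisted action on $\Fr{e}{(T^{-1}R)}$.
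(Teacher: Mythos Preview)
Your proof is correct and follows essentially the same approach as the paper: both use $F$-finiteness to descend an arbitrary map on the localization to (a $T$-multiple of) the localization of a map over $R$, then clear denominators via the Frobenius-twisted action and invoke the compatibility hypothesis on $R$. Your bookkeeping is slightly more compact (you combine $a$ and $x$ into a single element of $\ba_{p^e-1}I$ and use the identity $\Fr{e}{(x/t)}=(1/t)\cdot\Fr{e}{(xt^{p^e-1})}$), whereas the paper separates them and writes each denominator as a $p^e$-th power, but the content is the same.
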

\begin{proof}
Set $S = T^{-1} R$ and set $\Delta' = \Delta|_{\Spec S}$.
Consider a map $\psi :  \Fr{e}{S(\lceil (p^e - 1)\Delta' \rceil)} \rightarrow S$.  Since $R$ is $F$-finite, we know that there exists some map $\phi : \Fr{e}{R(\lceil (p^e - 1)\Delta \rceil)} \rightarrow R$ such that $\psi$ can be represented by ${1 \over s} \phi$ where $s \in T$.  Fix $a \in T^{-1} \ba_{p^e-1}$ and $x \in T^{-1} I$ and consider $\psi(a x)$.

Note that $x$ can be represented as $z/t^{p^{e'}}$ where $z \in I$, $t \in T$ and $e' = ne$ for some $n$.   Likewise, $a$ can be represented as $w/u^{p^{e''}}$ where $w \in \ba_{p^{e} - 1}$, $u \in T$ and $e'' = me$ for some $m$.  Then $\psi(a x) = \psi(w/u^{p^{e''}} z/t^{p^{e'}}) = {1 \over u^m t^n} \psi(w z)$.  But we know that $\psi(w z)$ is identified with ${1 \over s} \phi(w z)$ and $\phi(w z) \in I$ by hypothesis. This implies that $\psi(ax) \in T^{-1} I$ as desired.

\end{proof}

We also have the following partial converse to the above statement; again compare with \cite[Proposition 5.3]{LyubeznikSmithCommutationOfTestIdealWithLocalization}.

\begin{lemma}\cite[Proposition 5.3]{LyubeznikSmithCommutationOfTestIdealWithLocalization}
\label{UniformlyFCompatibleIdealsUnlocalize}
Suppose that $R$ is $F$-finite, $(R, \Delta, \ba_{\bullet})$ is a triple and $T$ is a multiplicative system on $R$.  Let $I$ be an ideal of $T^{-1} R$ and let us denote by $f$ the natural map $f : R \rightarrow S = {T^{-1} R}$.  If $I$ is uniformly $(\Delta |_{\Spec S}, (\ba S) _{\bullet}, F)$-compatible, then $f^{-1}(I)$ is uniformly $(\Delta, \ba_{\bullet}, F)$-compatible.
\end{lemma}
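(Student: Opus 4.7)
The plan is to push the map $\phi$ on $R$ down to a localized map $\psi$ on $S$, and use the hypothesis on $I$ to conclude.

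First I would fix an $e > 0$, an element $a \in \ba_{p^e - 1}$, an element $x \in f^{-1}(I)$, and an arbitrary $R$-linear map
\[
\phi : \Fr{e}{R(\lceil (p^e - 1)\Delta \rceil)} \rightarrow R.
\]
The goal is to show $\phi(\Fr{e}{(ax)}) \in f^{-1}(I)$, equivalently that $f(\phi(\Fr{e}{(ax)})) \in I$. Since $R$ is $F$-finite, the module $\Fr{e}{R(\lceil (p^e - 1)\Delta \rceil)}$ is finitely presented, so $\Hom$ commutes with localization at $T$. Hence $\phi$ induces an $S$-linear map
\[
\psi : \Fr{e}{S(\lceil (p^e-1)\Delta|_{\Spec S}\rceil)} \rightarrow S,
\]
obtained by applying $T^{-1}(\blank)$; here we use that restriction of divisors under localization gives $T^{-1} R(\lceil (p^e - 1)\Delta \rceil) = S(\lceil (p^e - 1)\Delta|_{\Spec S}\rceil)$ because localization preserves reflexive hulls. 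Concretely, for $r \in R$, $\psi$ satisfies $\psi(\Fr{e}{(f(r))}) = f(\phi(\Fr{e}{r}))$.

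Next, observe that $f(a) \in \ba_{p^e - 1} S = (\ba S)_{p^e - 1}$ and $f(x) \in I$. By the uniform $(\Delta|_{\Spec S}, (\ba S)_\bullet, F)$-compatibility hypothesis applied to $\psi$, we obtain
\[
\psi\bigl(\Fr{e}{(f(a) f(x))}\bigr) \in I.
\]
But by construction of $\psi$, the left hand side equals $f(\phi(\Fr{e}{(ax)}))$. Therefore $f(\phi(\Fr{e}{(ax)})) \in I$, which is exactly the statement that $\phi(\Fr{e}{(ax)}) \in f^{-1}(I)$. Since $\phi$, $a$, $e$, and $x$ were arbitrary, $f^{-1}(I)$ is uniformly $(\Delta, \ba_\bullet, F)$-compatible.

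The only real obstacle is the justification that any $\psi : \Fr{e}{S(\lceil (p^e-1)\Delta|_{\Spec S}\rceil)} \to S$ arises (up to the $1/s$ scalar that Lemma \ref{UniformlyFCompatibleIdealsLocalize} needed in the opposite direction) as a localization of some $\phi$; but here we are going the easy direction — starting with $\phi$ on $R$ and localizing — so this is automatic from $F$-finiteness and the compatibility of Hom with localization for finitely presented modules. Everything else is bookkeeping with the identifications $(\ba S)_i = \ba_i S$ and $\Delta|_{\Spec S}$ coming from the definition of a triple on the localization.
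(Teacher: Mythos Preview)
Your proof is correct and follows essentially the same approach as the paper: both arguments take an $R$-linear map $\phi$ and localize (tensor with $S$) to obtain an $S$-linear map, then invoke the hypothesis on $I$. The paper phrases it as a one-line proof by contradiction while you argue directly, but the underlying idea---that localizing $\phi$ produces a valid test map for the compatibility of $I$ in $S$---is identical.
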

\begin{proof}
Suppose not; that is, suppose that $f^{-1}(I)$ is not uniformly $(\Delta, \ba_{\bullet}, F)$-compatible.  Therefore let
\[
\psi : \Fr{e}{R\lceil (p^e - 1) \Delta \rceil)} \rightarrow R
\]
be a map that sends some element $x \in \ba_{p^e - 1} f^{-1}(I) \subseteq R \subseteq R(\lceil (p^e - 1) \Delta \rceil)$ to some element $z \notin f^{-1}(I)$. But then, tensoring the map with $\tensor_R S$ contradicts our hypothesis.
\end{proof}

The same proof also gives us the following statement substituting completion for localization.

\begin{lemma}
\label{LemmaUniformlyCompatibleIdealsUncomplete}
Suppose that $R$ is an $F$-finite local ring and suppose $(R, \Delta, \ba_{\bullet})$ is a triple.  Let us denote by $f$ the natural map $f : R \rightarrow \hat R$.  If $I \subseteq \hat{R}$ is uniformly $(\hat{\Delta}, \hat \ba_{\bullet}, F)$-compatible then $f^{-1}(I)$ is uniformly $(\Delta, \ba_{\bullet}, F)$-compatible.
\end{lemma}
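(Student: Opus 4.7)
The plan is to imitate the proof of Lemma \ref{UniformlyFCompatibleIdealsUnlocalize}, replacing the localization functor $T^{-1}(\blank)$ with the completion functor $\blank \tensor_R \hat{R}$. The key point is that $R \to \hat{R}$ is a flat extension, and since $R$ is $F$-finite local Noetherian, the module $\Fr{e}{R(\lceil (p^e - 1)\Delta \rceil)}$ is finitely generated over $R$; hence base change to $\hat{R}$ behaves nicely and the formation of $\Fr{e}{\blank}$, of reflexive $\O$-modules associated to divisors, and of the graded system $\ba_{\bullet}$ all commute with completion.

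Specifically, I would argue by contradiction. Suppose $f^{-1}(I)$ is not uniformly $(\Delta, \ba_{\bullet}, F)$-compatible. Then there exist $e > 0$, an element $x \in \ba_{p^e - 1} f^{-1}(I)$ and an $R$-linear map
\[
\psi : \Fr{e}{R(\lceil (p^e - 1) \Delta \rceil)} \to R
\]
with $\psi(x) = z \notin f^{-1}(I)$; equivalently $f(z) \notin I$. Applying $\blank \tensor_R \hat{R}$ to $\psi$ produces, after the identifications mentioned above, a map
\[
\hat{\psi} : \Fr{e}{\hat{R}(\lceil (p^e - 1) \hat{\Delta} \rceil)} \to \hat{R}.
\]
The image $f(x)$ of $x$ in $\hat{R}$ lies in $\hat{\ba}_{p^e - 1} I$ because $f(\ba_{p^e-1}) \subseteq \hat{\ba}_{p^e - 1}$ and $f(f^{-1}(I)) \subseteq I$. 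By construction $\hat{\psi}(f(x)) = f(\psi(x)) = f(z) \notin I$, contradicting the uniform $(\hat{\Delta}, \hat{\ba}_{\bullet}, F)$-compatibility of $I$.

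The only subtle point — and what I would expect to be the main thing to check carefully — is the identification of $\Fr{e}{R(\lceil (p^e - 1) \Delta \rceil)} \tensor_R \hat{R}$ with $\Fr{e}{\hat{R}(\lceil (p^e - 1) \hat{\Delta} \rceil)}$. This follows from the $F$-finiteness of $R$ (which makes $\Fr{e}{R}$ a finitely generated $R$-module so that $\tensor_R \hat{R}$ agrees with its completion) together with the compatibility of the divisorial sheaf construction with completion on a Noetherian normal local ring. Once these identifications are in place, the contradiction argument is formally identical to that of Lemma \ref{UniformlyFCompatibleIdealsUnlocalize}, so essentially no new calculation is required.
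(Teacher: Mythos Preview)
Your proposal is correct and follows exactly the approach the paper takes: the paper does not give a separate proof of this lemma but simply remarks that ``the same proof also gives us the following statement substituting completion for localization,'' referring to the contradiction argument of Lemma~\ref{UniformlyFCompatibleIdealsUnlocalize}. Your write-up is in fact more careful than the paper's, since you make explicit the identification $\Fr{e}{R(\lceil (p^e - 1)\Delta \rceil)} \tensor_R \hat{R} \cong \Fr{e}{\hat{R}(\lceil (p^e - 1)\hat{\Delta} \rceil)}$ (which the paper only spells out later, in the proof of Lemma~\ref{LemmaUniformlyCompatibleIdealsComplete}).
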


\begin{lemma}
\label{LemmaUniformlyCompatibleIdealsComplete}
Suppose that $R$ is an $F$-finite local ring and suppose that $(R, \Delta, \ba_{\bullet})$ is a triple.  Let $I$ be an ideal of $R$ and let us denote by $f$ the natural map $f : R \rightarrow \hat R$.  If $I \subseteq R$ is uniformly $(\Delta, \ba_{\bullet}, F)$-compatible, then $I \hat{R}$ is uniformly $(\hat{\Delta}, \hat \ba_{\bullet}, F)$-compatible.
\end{lemma}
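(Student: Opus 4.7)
The plan is to use that Hom commutes with flat base change for finite modules --- combined with $F$-finiteness --- to reduce any $\hat R$-linear map out of $\Fr{e}{\hat R(\lceil (p^e - 1)\hat\Delta \rceil)}$ to an $\hat R$-linear combination of base-changed maps out of $\Fr{e}{R(\lceil (p^e - 1)\Delta \rceil)}$. The uniform $F$-compatibility of $I$ in $R$ then propagates through the base change by continuity.

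First I would record the standard ingredients. Since $R$ is $F$-finite, excellent, and local, the completion $\hat R$ is again $F$-finite; moreover, writing $M_e := \Fr{e}{R(\lceil (p^e-1)\Delta \rceil)}$ --- a finitely generated $R$-module --- completion commutes both with $\Fr{e}{(-)}$ and with the formation of the reflexive divisorial module, so that $\hat M_e := \Fr{e}{\hat R(\lceil(p^e-1)\hat\Delta\rceil)} \cong M_e \otimes_R \hat R$. Flatness of $R \to \hat R$ together with finite generation of $M_e$ then yields the natural Hom-base-change isomorphism
\[
\Hom_R(M_e, R) \otimes_R \hat R \;\xrightarrow{\sim}\; \Hom_{\hat R}(\hat M_e, \hat R).
\]
Consequently any $\hat R$-linear $\hat\phi : \hat M_e \to \hat R$ decomposes as a finite sum $\hat\phi = \sum_j r_j \tilde\phi_j$ with $r_j \in \hat R$, where each $\tilde\phi_j$ is the $\hat R$-linear extension of some $\phi_j : M_e \to R$; in particular $\tilde\phi_j$ restricts to $\phi_j$ on $M_e \subseteq \hat M_e$.

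Now fix $\hat a \in \hat\ba_{p^e-1} = \ba_{p^e-1}\hat R$ and $\hat x \in I\hat R$; the goal is $\hat\phi(\Fr{e}{\hat a \hat x}) \in I\hat R$. Because $\hat\ba_{p^e-1} \cdot I\hat R = (\ba_{p^e-1}I)\hat R$, the product $\hat a \hat x$ is an $\bm$-adic limit of elements $y^{(n)} \in \ba_{p^e-1}I$. Uniform $(\Delta, \ba_{\bullet}, F)$-compatibility of $I$ in $R$ gives $\tilde\phi_j(\Fr{e}{y^{(n)}}) = \phi_j(\Fr{e}{y^{(n)}}) \in I$ for every $n$ and $j$. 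Each $\tilde\phi_j$ is $\hat R$-linear between finitely generated $\hat R$-modules, hence continuous in the $\bm$-adic topology, and $I\hat R$ is closed in $\hat R$; passing to the limit gives $\tilde\phi_j(\Fr{e}{\hat a \hat x}) \in I\hat R$, and summing yields $\hat\phi(\Fr{e}{\hat a \hat x}) \in I\hat R$. The only point demanding real care is the identification of completion with $\Fr{e}{(-)}$ applied to the reflexive module $R(\lceil(p^e - 1)\Delta\rceil)$, which is standard for excellent $F$-finite (normal, when $\Delta \neq 0$) local rings; the continuity step then sidesteps any direct manipulation of the Frobenius-twisted $\hat R$-module structure on $\Fr{e}{\hat M_e}$, since $\phi_j$ is only ever evaluated on honest elements of $\ba_{p^e-1}I \subseteq R$.
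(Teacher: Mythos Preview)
Your argument is correct and follows essentially the same approach as the paper: both hinge on the Hom--base-change isomorphism $\Hom_R(M_e,R)\otimes_R \hat R \cong \Hom_{\hat R}(\hat M_e,\hat R)$ coming from $F$-finiteness, so that every $\hat\phi$ is an $\hat R$-combination of maps extended from $R$. The only difference is in the final reduction: where you pass to $\mathfrak m$-adic limits and invoke continuity plus closedness of $I\hat R$, the paper instead observes that $F^e_* I$ and $F^e_* \ba_{p^e-1}$ are finitely generated $R$-modules whose generators also generate $F^e_* \hat I$ and $F^e_* \hat\ba_{p^e-1}$ over $\hat R$, so one can check the containment on those finitely many elements by pure $\hat R$-linearity---this avoids the topological bookkeeping entirely.
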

\begin{proof}
Since $R$ is $F$-finite, notice that $\Hom_R(\Fr{e}{R(\lceil (p^e -1 )\Delta \rceil)}, R) \tensor_R \hat{R}$ is isomorphic to  $\Hom_{\hat R}(\Fr{e}{\hat{R}(\lceil (p^e - 1)\hat{\Delta}\rceil), \hat{R})}$.  Let $\{x_1, \ldots, x_n\}$ be generators for $\Fr{e}{I}$ as an $R$-module.  This implies that their images are generators for $\Fr{e}{\hat{I}}$ as an $\hat{R}$-module.  But then, it is enough to check that for every $e > 0$,  every $a \in \hat{\ba}_{p^e - 1}$, and every homomorphism $\phi \in \Hom_{\hat R}(\Fr{e}{\hat{R}(\lceil (p^e - 1)\hat{\Delta}\rceil, \hat{R})}$, we have $(\phi \circ \Fr{e}{(\times a)}) (f(x_i)) \subseteq \hat{I}$.  However, again, we may generate $\Frp{e}{\hat{\ba}_{p^e - 1}}$ as an $\hat{R}$-module, by the images of elements of $\ba_{p^e - 1}$, and so we may assume that the $a$ above may be taken to be one of these elements.  But then it becomes clear that $\phi \circ \Fr{e}{(\times a)} (f(x_i)) \subseteq \hat{I}$ since the result holds for $\phi$ coming from $\Hom_R(\Fr{e}{R(\lceil (p^e -1 )\Delta \rceil)}, R)$.
\end{proof}

We can also link uniformly $F$-compatible ideals with Fedder-type criteria (in fact we use Fedder's original machinery).  We first recall a lemma of Fedder (which was stated in slightly more general context originally).  Also compare with \cite[Proposition 5.2]{LyubeznikSmithCommutationOfTestIdealWithLocalization}.

\begin{lemma}
\label{LemmaFeddersOriginal}
\cite[Lemma 1.6]{FedderFPureRat}
Suppose that $S$ is an $F$-finite regular local ring and $R = S/I$.  Then,
\begin{itemize}
\item[(a)]  $\Hom_S(\Fr{e}{S}, S) \cong \Fr{e}{S}$ as an $\Fr{e}{S}$-module.
\item[(b)]  Let $T$ be a generator for $\Hom_S(\Fr{e}{S}, S)$ as an $\Fr{e}{S}$-module, and $J$ an ideal.  Let $x$ be an element of $S$.  Then the image of $\Fr{e}{J} \subset \Fr{e}{S}$ under the homomorphism $T \circ \Fr{e}{(\times x)}: \Fr{e}{S} \rightarrow S$ is contained in $J$ if and only if $x \in (J^{[p^e]} : J)$.
\item[(c)]  There exists an isomorphism $\phi: \Fr{e}{((I^{[p^e]} : I)/I^{[p^e]})} \rightarrow \Hom_R(\Fr{e}{R}, R)$ defined by $\phi(\overline{s}) = \overline{T \circ \Fr{e}{(\times s)}}$ where $T$ is any choice of an $\Fr{e}{S}$-module generator for the module $\Hom_S(\Fr{e}{S}, S)$.
\end{itemize}
 \end{lemma}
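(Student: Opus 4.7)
The plan is to prove (a) via Kunz's theorem combined with a duality/completion argument, and then derive (b) and (c) as essentially formal consequences of the rank-one structure established in (a). For (a), since $S$ is $F$-finite and regular local, Kunz's theorem makes $\Fr{e}{S}$ a finite free $S$-module; hence $\Hom_S(\Fr{e}{S}, S)$ is $S$-free of the same rank. To upgrade this to a statement about the $\Fr{e}{S}$-module structure I would observe that $S \to \Fr{e}{S}$ is a finite flat extension of regular (hence Gorenstein) local rings, so its relative dualizing module $\omega_{\Fr{e}{S}/S} \cong \Hom_S(\Fr{e}{S}, S)$ is invertible, and, being invertible over the local ring $\Fr{e}{S}$, is free of rank one. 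Alternatively, after completing to $\hat{S} \cong k[[x_1, \dots, x_n]]$, one writes down an explicit $\Fr{e}{S}$-generator as the dual basis vector to the monomial $\Fr{e}{(x_1^{p^e - 1} \cdots x_n^{p^e - 1})}$, and cyclicity descends to $S$ by faithful flatness.

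For (b), the direction $(\Leftarrow)$ is immediate: $xJ \subseteq J^{[p^e]}$ forces $\Fr{e}{(xJ)} \subseteq \Fr{e}{(J^{[p^e]})} = J \cdot \Fr{e}{S}$, and then $S$-linearity of $T$ gives $T(J \cdot \Fr{e}{S}) \subseteq J$. For $(\Rightarrow)$, the crucial consequence of (a) is that, because $\Fr{e}{S}$ is finite free over $S$ and $\Hom_S(\Fr{e}{S}, S) = \Fr{e}{S} \cdot T$ is $\Fr{e}{S}$-free of rank one, we have $\Hom_S(\Fr{e}{S}, J) = J \cdot \Hom_S(\Fr{e}{S}, S) = \Fr{e}{(J^{[p^e]})} \cdot T$, so a map of the form $\Fr{e}{s} \cdot T$ has image contained in $J$ if and only if $s \in J^{[p^e]}$. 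Now suppose $T \circ \Fr{e}{(\times x)}$ sends $\Fr{e}{J}$ into $J$. For any $s \in J$ and any $t \in S$ the product $st$ still lies in $J$, so $T(\Fr{e}{(xst)}) \in J$; this says $\Fr{e}{(xs)} \cdot T$ has image contained in $J$, forcing $xs \in J^{[p^e]}$. Letting $s$ range over $J$ yields $xJ \subseteq J^{[p^e]}$.

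For (c), the map $\phi(\overline{s}) = \overline{T \circ \Fr{e}{(\times s)}}$ requires four checks that all follow from (a) and (b). Well-definedness on the codomain is exactly (b) applied to $J = I$: $s \in (I^{[p^e]} : I)$ if and only if $T \circ \Fr{e}{(\times s)}$ descends to a map $\Fr{e}{R} \to R$. Well-definedness on the domain and injectivity both amount to the identification $\Hom_S(\Fr{e}{S}, I) = \Fr{e}{(I^{[p^e]})} \cdot T$ from the proof of (b), applied in opposite directions: if $s \in I^{[p^e]}$ then $\Fr{e}{s} \cdot T$ has image in $I$, so descends to zero; conversely if $\phi(\overline{s}) = 0$ then $\Fr{e}{s} \cdot T$ has image in $I$, forcing $s \in I^{[p^e]}$. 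For surjectivity I would lift any $\psi \in \Hom_R(\Fr{e}{R}, R)$ to some $\tilde{\psi} : \Fr{e}{S} \to S$ using projectivity of $\Fr{e}{S}$, write $\tilde{\psi} = \Fr{e}{s} \cdot T$ via (a), and invoke (b) to conclude $s \in (I^{[p^e]} : I)$. The main obstacle is (a): once the $\Fr{e}{S}$-module structure on $\Hom_S(\Fr{e}{S}, S)$ is pinned down as free of rank one, the proofs of (b) and (c) are direct unwindings of the identifications, but proving (a) itself is not formal and requires either the explicit monomial calculation after completion or the Gorenstein-duality interpretation.
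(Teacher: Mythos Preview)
The paper does not supply its own proof of this lemma; it is simply quoted from \cite[Lemma 1.6]{FedderFPureRat}. Your proposal is correct and is essentially the standard argument (and close to Fedder's original): (a) via Kunz plus either the explicit monomial generator after completion or the Gorenstein relative-canonical interpretation, and then (b) and (c) as formal consequences of the rank-one structure of $\Hom_S(\Fr{e}{S},S)$ over $\Fr{e}{S}$. There is nothing to compare against in the present paper.
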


\begin{proposition}
\label{PropFedderCriterionForCompatible}
Suppose that $S$ is an $F$-finite regular ring and that $R = S/I$.  Further suppose that $\ba_{\bullet}$ is a graded system of ideals such that $(S, \ba_{\bullet})$ and $(R, \overline{\ba_{\bullet}})$ are pairs.  Let $J$ be another ideal containing $I$.  Then $\overline{J} \subseteq R$ is uniformly $(\overline{\ba_{\bullet}}, F)$-compatible if and only if for all $e \geq 0$ and all $a \in \ba_{p^e-1}$ we have
\[
a (I^{[p^e]} : I) \subseteq (J^{[p^e]} : J).
\]
In other words, $\overline{J} \subseteq R$ is uniformly $(\overline{\ba_{\bullet}}, F)$-compatible if and only if for all $e \geq 0$ we have
\[
\ba_{p^e-1} (I^{[p^e]} : I) \subseteq (J^{[p^e]} : J).
\]
\end{proposition}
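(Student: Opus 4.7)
The plan is to translate uniform compatibility into Fedder-type colon-ideal data by applying all three parts of Lemma \ref{LemmaFeddersOriginal}. Because $\Delta = 0$ here, the $R$-linear maps we must control are just elements of $\Hom_R(\Fr{e}{R}, R)$, and Fedder's lemma describes this module completely in terms of colon ideals inside the regular ring $S$.

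First I would unwind the definition. The ideal $\overline{J} \subseteq R$ is uniformly $(\overline{\ba_{\bullet}}, F)$-compatible if and only if, for every $e > 0$, every $a \in \ba_{p^e - 1}$ (which suffices to test by generators), and every $\phi \in \Hom_R(\Fr{e}{R}, R)$, we have $\phi\bigl(\Fr{e}{\overline{aJ}}\bigr) \subseteq \overline{J}$. By Lemma \ref{LemmaFeddersOriginal}(c), writing $T$ for a generator of $\Hom_S(\Fr{e}{S}, S)$, every such $\phi$ is of the form $\overline{T \circ \Fr{e}{(\times s)}}$ for some $s \in (I^{[p^e]} : I)$, and conversely every such $s$ yields such a $\phi$. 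Thus the containment $\phi(\Fr{e}{\overline{aJ}}) \subseteq \overline{J}$ lifts to the containment $T(\Fr{e}{(saJ)}) \subseteq J + I = J$ in $S$ (the equality uses $I \subseteq J$, and the lift is well-defined because $T(\Fr{e}{I^{[p^e]}}) \subseteq I$).

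Second, I would apply Lemma \ref{LemmaFeddersOriginal}(b) to the ideal $J$ with the element $sa \in S$: this gives that $T(\Fr{e}{(saJ)}) \subseteq J$ if and only if $sa \in (J^{[p^e]} : J)$. Running $s$ over all elements of $(I^{[p^e]} : I)$ and $a$ over all elements of $\ba_{p^e - 1}$ therefore translates uniform $(\overline{\ba_{\bullet}}, F)$-compatibility of $\overline{J}$ into exactly the claim
\[
a\,(I^{[p^e]} : I) \subseteq (J^{[p^e]} : J) \qquad \text{for every } e > 0 \text{ and every } a \in \ba_{p^e - 1},
\]
which is the stated criterion (the $e = 0$ case being vacuous since both colons equal $S$ and $\ba_0 = S$). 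The equivalent reformulation $\ba_{p^e - 1}\,(I^{[p^e]} : I) \subseteq (J^{[p^e]} : J)$ is immediate by ranging $a$ through generators.

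The only step that requires a moment of care is checking that the equivalence ``$\phi(\Fr{e}{(\bar a \bar J)}) \subseteq \bar J$ in $R$'' genuinely corresponds to ``$T(\Fr{e}{(saJ)}) \subseteq J$ in $S$,'' since a priori we only know the latter modulo $I$; but $I \subseteq J$ collapses the ambiguity. Once this observation is in hand, the proposition is a direct bookkeeping of Fedder's isomorphism, and no further machinery is needed.
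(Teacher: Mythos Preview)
Your argument is essentially the same as the paper's: both directions are handled by representing an arbitrary $\phi \in \Hom_R(\Fr{e}{R},R)$ via Fedder's isomorphism as $\overline{T \circ \Fr{e}{(\times s)}}$ with $s \in (I^{[p^e]}:I)$, and then invoking part (b) of Lemma~\ref{LemmaFeddersOriginal} applied to $J$ with the element $as$. The only point you omit is the paper's opening reduction ``without loss of generality we may assume that $S$ is local'': Lemma~\ref{LemmaFeddersOriginal} is stated for a regular \emph{local} ring (the generator $T$ of $\Hom_S(\Fr{e}{S},S)$ need not exist globally), so you must localize first, noting that both uniform $(\overline{\ba_\bullet},F)$-compatibility and the colon-ideal containments can be checked locally. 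With that sentence added, your proof matches the paper's.
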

\begin{proof}
Without loss of generality, we may assume that $S$ is local.
Fix $T$ as in Lemma \ref{LemmaFeddersOriginal}.
Now suppose that for all $e \geq 0$, all $a \in \ba_{p^e-1}$ we have $a (I^{[p^e]} : I) \subseteq (J^{[p^e]} : J)$.
Note that, for every composition $\xymatrix{\phi : \Fr{e}{R} \ar[r]^-{\Fr{e}{(\times a)}} & \Fr{e}{R} \ar[r]^-{\alpha} & R}$, we can represent $\alpha$ as $T \circ \Fr{e}{(\times s)}$ for some $s \in (I^{[p^e]} : I)$.  But then $\phi$ can be represented by $T \circ \Fr{e}{(\times a s)}$.  But $a s \in (J^{[p^e]} : J)$, which implies that $\phi$ must send $\Fr{e}{(\overline{J})} \subseteq \Fr{e}{R}$ into $\overline{J}$, and so $\overline{J}$ is uniformly $(\overline{\ba}_{\bullet}, F)$-compatible.

Conversely, suppose that $\overline{J}$ is uniformly $(\ba_{\bullet}, F)$-compatible.  This means that for all $e \geq 0$ and all $a \in \ba_{(p^e-1)}$ and all compositions
\[
\xymatrix{\phi : \Fr{e}{R} \ar[r]^-{\Fr{e}{(\times a)}} & \Fr{e}{R} \ar[r]^-{\alpha} & R}
\]
we have $\phi(\Fr{e}{\overline{J}}) \subseteq \overline{J}$.  Pick arbitrary elements $s \in (I^{[p^e]} : I)$ and $a \in \ba_{(p^e-1)}$, and then consider the corresponding $\phi$ which is represented by $T \circ \Fr{e}{(\times a s)}$.  But note that $\phi(\Fr{e}{\overline{J}}) \subseteq \overline{J}$, which implies that $T \circ \Fr{e}{(\times a s)}(\Fr{e}{J}) \subseteq J$.  But then, by Lemma \ref{LemmaFeddersOriginal}, we see that $a s \in (J^{[p^e]} : J)$.  This implies that $a (I^{[p^e]} : I) \subseteq (J^{[p^e]} : J)$.
\end{proof}

\begin{remark}
There doesn't seem to be a way to formulate Proposition \ref{PropFedderCriterionForCompatible} in a way that involves divisors $\Delta$ on $R$.  Note that, since every integral divisor on $\Spec S$ is Cartier, any such divisor's contribution can be absorbed into the term $\ba_{\bullet}$.  On the other hand, some divisors on $\Spec R$ may not be the restriction of divisors from $\Spec S$.
\end{remark}

\begin{remark}
\label{RemarkLinksWithFESubmodules}
In particular, by \cite[Proposition 5.2]{LyubeznikSmithCommutationOfTestIdealWithLocalization} we see that uniformly $F$-compatible ideals can be characterized by the fact that they annihilate $\mathcal{F}(E)$-submodules of $E$ (where $E$ is the injective hull of the residue field $R/\bm$).  Also compare with Lemma \ref{LemmaFSubmoduleLocalCharacterizationOfCenterOfFPurity}.  Note that in \cite{LyubeznikSmithCommutationOfTestIdealWithLocalization}, the authors consider submodules of $E$ that are stable under the additive maps $\phi_e : E \rightarrow E$ which satisfy $\phi_e(rm) = r^{p^e} \phi_e(m)$.  We instead consider submodules (ideals) of $R$ that are stable under the additive maps $\psi_e : R \rightarrow R$ which satisfy $\psi_e(r^{p^e} m) = r \psi_e(m)$.  \end{remark}

\begin{corollary}
\label{CorFinitisticTestIdealIsCompatible}
If $R$ is a quotient of an $F$-finite regular local ring, then the finitistic test ideal $\tau(R, \ba_{\bullet})$ is uniformly $F$-compatible.
\end{corollary}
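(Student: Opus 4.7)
The plan is to verify Definition \ref{DefinitionUniformlyFCompatible} directly: fix $e > 0$, $a \in \ba_{p^e - 1}$, and $\phi \in \Hom_R(F^e_* R, R)$, and given $c \in \tau(R, \ba_\bullet)$, show that the element $d := \phi(F^e_*(ac))$ is again a (finitistic) sharp test multiplier. That is, I need to show: for any finitely generated $N \subseteq M$, any $z \in N^{*\ba_\bullet}_M$, any $e' \geq 0$, and any $a' \in \ba_{p^{e'} - 1}$, the element $d a' z^{p^{e'}} = z \otimes F^{e'}_*(d a')$ lies in $N^{[p^{e'}]}_M$.

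The argument is modeled on the proof of Theorem \ref{TheoremBigVsNonFinitisticTestIdeals}. The key preliminary observation is that $a a'^{p^e} \in \ba_{p^e - 1} \cdot \ba_{p^{e'} - 1}^{p^e} \subseteq \ba_{p^{e+e'} - 1}$. Applying the test multiplier property of $c$ at the higher exponent $e + e'$ (with this $a a'^{p^e}$ playing the role of the $\ba$-element) gives
\[
c \cdot (a a'^{p^e}) \cdot z^{p^{e+e'}} \;=\; z \otimes F^{e+e'}_*(c a a'^{p^e}) \;\in\; N^{[p^{e+e'}]}_M.
\]
Now consider the $R$-linear map $\tilde\phi := \mathrm{id}_M \otimes F^{e'}_* \phi : M \otimes_R F^{e+e'}_* R \to M \otimes_R F^{e'}_* R$; since $\phi$ carries $F^e_* R$ to $R$, this induced map sends $N^{[p^{e+e'}]}_M$ into $N^{[p^{e'}]}_M$. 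Because the $R$-action on $F^e_* R$ is through Frobenius, $F^e_*(c a a'^{p^e}) = a' \cdot F^e_*(ca)$; combined with the $R$-linearity of $\phi$,
\[
\tilde\phi\bigl(z \otimes F^{e+e'}_*(c a a'^{p^e})\bigr) \;=\; z \otimes F^{e'}_*\bigl(a' \phi(F^e_*(ca))\bigr) \;=\; d a' z^{p^{e'}},
\]
which therefore lies in $N^{[p^{e'}]}_M$, as desired.

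The main obstacle will be the shift in Frobenius exponent: the witness property of $c$ at exponent $e'$ does not directly yield information about $\phi(F^e_*(ac))$, so one must invoke $c$'s witness property at the higher exponent $e + e'$ and then use $\tilde\phi$ to descend back to exponent $e'$. Once this is set up, the identity $F^e_*(c a a'^{p^e}) = a' \cdot F^e_*(ca)$ (the Frobenius-twisted $R$-action) cleanly transports the factor of $a'$ across $F^e_*$ so that the $R$-linearity of $\phi$ extracts it. The hypothesis that $R$ is a quotient of an $F$-finite regular local ring ensures that sharp test multipliers exist and generate $\tau(R, \ba_\bullet)$, so this pointwise verification suffices.
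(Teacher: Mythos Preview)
Your proof is correct, and it takes a genuinely different route from the paper's. The paper defers entirely to \cite{SchwedeSharpTestElements} and \cite{VassilevTestIdeals}, whose arguments proceed via the Fedder-type criterion of Proposition~\ref{PropFedderCriterionForCompatible}: one verifies the colon-ideal containment $\ba_{p^e-1}(I^{[p^e]}:I) \subseteq (J^{[p^e]}:J)$ for $J$ the preimage of the test ideal in the regular ambient ring $S$. This is why the hypothesis that $R$ be a quotient of a regular local ring appears in the statement. Your argument, by contrast, works directly with the definition of test multiplier and is a finitistic version of the computation in Theorem~\ref{TheoremBigVsNonFinitisticTestIdeals} (as you note): you push the test-multiplier property of $c$ up to exponent $e+e'$ via the containment $a\,a'^{p^e}\in\ba_{p^{e+e'}-1}$, and then descend with the induced map $\tilde\phi=\mathrm{id}_M\otimes F^{e'}_*\phi$. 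The identity $F^e_*(ca\,a'^{p^e})=a'\cdot F^e_*(ca)$ and $R$-linearity of $\phi$ finish the job.

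Two remarks. First, your argument does not genuinely require the ambient regular ring: it shows directly that the set of finitistic sharp test multipliers is stable under $c\mapsto\phi(F^e_*(ac))$, so the hypothesis is only used (if at all) to guarantee that this set is the whole test ideal; the Fedder-criterion approach, by contrast, needs the ambient $S$ in an essential way. Second, you are proving uniform $(\ba_\bullet,F)$-compatibility, which is the correct reading of the corollary despite the shorthand ``uniformly $F$-compatible'' in the statement. Indeed, in a regular ring every nonzero uniformly $F$-compatible ideal is the unit ideal, whereas $\tau(R,\ba_\bullet)$ is typically proper, so the literal reading is false; the parallel Proposition~\ref{PropTestIdealIsCompatible} for the big test ideal states $(\Delta,\ba_\bullet,F)$-compatibility explicitly, confirming the intended meaning here.
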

\begin{proof}
The proof is the same as in \cite{SchwedeSharpTestElements} (respectively, \cite{VassilevTestIdeals}) where the same result was shown for $\tau(R, \ba^t)$ (respectively, for $\tau(R)$).
\end{proof}

\section{Centers of $F$-purity}
\label{SectionCentersOfFPurity}

In this section, we give the definition of centers of $F$-purity as inspired by the remarks in the introduction.  We then link this notion to uniformly $F$-compatible ideals.

\begin{definition}
Suppose $R$ is an $F$-finite reduced ring and that $(R, \Delta, \ba_{\bullet})$ is a triple as defined in Subsection \ref{SubsectionPositiveCharacteristicTriples}.  We say that a (possibly non-closed) point $P \in X = \Spec R$ is a \emph{center of sharp $F$-purity for $(R, \Delta, \ba_{\bullet})$} if, for every $f \in P R_P$, every $e > 0$ and every $a \in \ba_{p^e-1}$, the composition map
\[
\xymatrix{
R_P \ar[r] & \Fr{e}{R_P} \ar[r]^-{\Fr{e}{(\times f a)}} & \Fr{e}{R_P} \ar[r] & \Fr{e}{R_P(\lceil (p^e - 1) \Delta|_{\Spec R_P} \rceil)}
}
\]
does not split.  If $\Delta = 0$ and $\ba_i = R$ for all $i \geq 0$, then we simply say that such a $P \in X$ is a \emph{center of $F$-purity}.
\end{definition}

\begin{remark}
As we will see, these notions have the most interesting implications under the hypothesis that the triple $(R, \Delta, \ba_{\bullet})$ is sharply $F$-pure.
\end{remark}

\begin{remark}
In the previous definition, it is equivalent to consider all $a \in (\ba_{p^e - 1} R_P)$ or all $f \in P$.  Likewise, it is equivalent to pick $f \in P$ instead of $f \in P R_P$.
\end{remark} 

\begin{remark}
 \label{RemarkRelationToSplittingPrime}
Suppose that $(R, \bm)$ is an $F$-finite $F$-pure reduced local ring.  Aberbach and Enescu previously defined the \emph{splitting prime} of $R$, denoted $\mathcal{P} = \mathcal{P}(R)$, to be the set of elements $c \in R$ such that the map $R \rightarrow \Fr{e}{R}$ which sends $1$ to $c$, does not split; see \cite{AberbachEnescuStructureOfFPure}.  Furthermore, they proved that $\mathcal{P}$ is a prime ideal and is compatible with localization at primes containing $\mathcal{P}$, see \cite[Theorem 3.3, Proposition 3.6]{AberbachEnescuStructureOfFPure}.  Therefore, by localizing at $\mathcal{P}$ itself, we immediately see that $\mathcal{P}$ is a center of $F$-purity.  On the other hand, suppose there was a center of $F$-purity $Q$ with $\mathcal{P} \subsetneq Q$.  Then for $c \in Q \setminus \mathcal{P}$, the map $R \rightarrow \Fr{e}{R}$ which sends $1$ to $c$ cannot split (since it doesn't split after localizing at $Q$).  But that is impossible since it would imply that $c \in \mathcal{P}$.  Therefore we see that $\mathcal{P}$ is the unique largest center of $F$-purity in $(R, \bm)$.
\end{remark}


Centers of  sharp $F$-purity are analogues of centers of log canonicity in characteristic zero.  The following simple observation gives a certain amount of evidence for this (we will prove a generalization of this result in Proposition \ref{PropTestIdealIsCompatible}).

\begin{proposition}
Suppose that $R$ is $F$-finite and reduced and that $(R, \Delta, \ba_{\bullet})$  is a triple.  Then the minimal primes of the non-strongly $F$-regular locus of the triple are centers of sharp $F$-purity.
\end{proposition}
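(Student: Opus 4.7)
My plan is to argue by contradiction, reducing to an application of Lemma \ref{LemmaSplitAlongAndComplementImpliesFRegular}. Suppose $P$ is a minimal prime of the non-strongly $F$-regular locus of $(R, \Delta, \ba_{\bullet})$ but is \emph{not} a center of sharp $F$-purity. Then by definition there exist $e > 0$, $a \in \ba_{p^e - 1}$, and $f \in P R_P$ such that the composition $\xi : R_P \to \Fr{e}{R_P(\lceil (p^e - 1)\Delta|_{\Spec R_P}\rceil)}$ sending $1$ to $fa$ splits as a map of $R_P$-modules. I intend to apply Lemma \ref{LemmaSplitAlongAndComplementImpliesFRegular} to the triple $(R_P, \Delta|_{\Spec R_P}, \ba_{\bullet} R_P)$ with $c := f$, to deduce that this triple is strongly $F$-regular, contradicting that $P$ lies in the non-strongly $F$-regular locus.

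First I would verify hypothesis (i) of Lemma \ref{LemmaSplitAlongAndComplementImpliesFRegular}. The splitting $\xi$ is already of the form $1 \mapsto c \cdot a$ with $a \in \ba_{p^e-1}$ and $c = f$, so the only real content is confirming $f \in (R_P)^{\circ}$. Since $\xi$ is split-injective and $R_P$ is reduced, multiplication by $fa$ must be injective on the torsion-free module $\Fr{e}{R_P(\lceil (p^e-1)\Delta|_{\Spec R_P}\rceil)}$. This forces $fa$ to avoid every minimal prime of $R_P$, and hence so does $f$.

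Second I would verify hypothesis (ii): that $((R_P)_f, \Delta|_{\Spec (R_P)_f}, \ba_{\bullet} (R_P)_f)$ is strongly $F$-regular. Every prime $\mathfrak{q}$ of $(R_P)_f$ corresponds to a prime $Q$ of $R$ with $Q \subseteq P$ and $f \notin Q R_P$; since $f \in P R_P$, necessarily $Q \subsetneq P$. Minimality of $P$ in the non-strongly $F$-regular locus then guarantees $(R_Q, \Delta|_{\Spec R_Q}, \ba_{\bullet} R_Q)$ is strongly $F$-regular. I would then invoke that strong $F$-regularity is detectable on stalks---concretely, a triple is strongly $F$-regular if and only if its big test ideal $\tau_b$ is the unit ideal, and $\tau_b$ commutes with localization by the remark following Theorem \ref{TheoremBigVsNonFinitisticTestIdeals}---to conclude that $(R_P)_f$ is strongly $F$-regular. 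Lemma \ref{LemmaSplitAlongAndComplementImpliesFRegular} then yields the desired contradiction.

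The main obstacle is the second verification: justifying that strong $F$-regularity of all stalks of $(R_P)_f$ (at primes strictly smaller than $P$) implies strong $F$-regularity of the ambient ring $(R_P)_f$ as a triple. In the classical non-pair setting this globalization is standard, but in the triple setting one needs the commutation of $\tau_b$ with localization established in Section \ref{SectionAppendixOnBigTestIdeals}; everything else (the existence of the splitting, the description of the primes of $(R_P)_f$, and the application of Lemma \ref{LemmaSplitAlongAndComplementImpliesFRegular}) is essentially formal.
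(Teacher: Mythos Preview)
Your approach is essentially the same as the paper's: localize at $P$, observe that minimality forces the punctured spectrum of $R_P$ to be strongly $F$-regular, assume $P$ is not a center of sharp $F$-purity to obtain a splitting along some $f \in PR_P$, and invoke Lemma~\ref{LemmaSplitAlongAndComplementImpliesFRegular} for the contradiction. The paper's version is terser---it simply asserts that strong $F$-regularity is a local condition and leaves the verification of $f \in (R_P)^{\circ}$ implicit---whereas you spell out both hypotheses; one small caution is that $\Fr{e}{R_P(\lceil (p^e-1)\Delta\rceil)}$ need not be torsion-free when $R_P$ is merely reduced, but your conclusion $fa \in (R_P)^{\circ}$ still follows since injectivity of $\xi$ localizes to every minimal prime.
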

\begin{proof}
Suppose $P$ is such a minimal prime.  Since being strongly $F$-regular is a local condition, we see that $(R_P, \Delta|_{\Spec R_P}, (\ba R_P)_{\bullet})$ is strongly $F$-regular on the punctured spectrum (that is, except at the unique closed point $P$) but is \emph{not} strongly $F$-regular at $P$.  If $P$ is not a center of sharp $F$-purity, then there exists an $f \in P \O_{X, P}$ and an $a \in \ba_{p^e-1}$ such that the composition
\[
\xymatrix{
R_P \ar[r] & \Fr{e}{R_P} \ar[r]^-{\Fr{e}{(\times f a)}} & \Fr{e}{R_P} \ar[r] & \Fr{e}{R_P(\lceil (p^e - 1) \Delta|_{\Spec R_P} \rceil)}
}
\]
splits.  But this is impossible by Lemma \ref{LemmaSplitAlongAndComplementImpliesFRegular}.
\end{proof}

\begin{corollary}
Suppose $R$ is an $F$-finite domain and suppose that $(R, \Delta, \ba_{\bullet})$ is a triple.  Then $(R, \Delta, \ba_{\bullet})$ is strongly $F$-regular if and only if $(R, \Delta, \ba_{\bullet})$ has no proper non-trivial centers of sharp $F$-purity.
\end{corollary}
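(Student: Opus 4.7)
The argument reduces to the preceding proposition combined with the definitions of strongly $F$-regular and center of sharp $F$-purity; I would prove each direction separately.

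For the forward direction I would argue by contradiction: suppose $P$ is a proper non-trivial (hence nonzero) center of sharp $F$-purity. Since $R$ is a domain I can pick a nonzero $f \in P$, which lies in $R^{\circ}$. Applying strong $F$-regularity with $c = f$ yields some $e > 0$ and some $d \in \ba_{p^e-1}$ such that the map $R \to \Fr{e}{R(\lceil (p^e - 1)\Delta \rceil)}$ sending $1$ to $fd$ splits. Localizing this splitting at $P$, and using the compatibility of $\Fr{e}{-}$ with localization in the $F$-finite setting, produces a splitting of the corresponding map over $R_P$. But $f \in P R_P$ and $d \in \ba_{p^e-1} R_P$, contradicting the definition of a center of sharp $F$-purity at $P$.

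For the reverse direction I would argue contrapositively via the preceding proposition. If $(R, \Delta, \ba_{\bullet})$ is not strongly $F$-regular, then the non-strongly $F$-regular locus $V \subseteq \Spec R$ is nonempty and closed, so it admits minimal primes. The generic point $(0)$ cannot lie in $V$: because $\ba_i \cap R^{\circ} \neq \emptyset$ for every $i$, one has $\ba_i R_{(0)} = R_{(0)}$, so the localization at $(0)$ is essentially a triple over the field $K = R_{(0)}$ with trivial graded system, and such a triple is strongly $F$-regular (for any nonzero $c \in K$, take $d = 1$; the $K$-linear map $K \to \Fr{e}{K}$ sending $1 \mapsto c$ splits because $\Fr{e}{K}$ is a $K$-vector space and $c$ is nonzero). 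Hence every minimal prime of $V$ is proper and nonzero, and by the preceding proposition each such minimal prime is a center of sharp $F$-purity, contradicting the hypothesis. The only technical point, which I expect to be the main (minor) obstacle, is verifying that splittings of $R \to \Fr{e}{R(\lceil (p^e - 1)\Delta \rceil)}$ do localize appropriately; this follows from the $F$-finite hypothesis and is used implicitly in Lemma \ref{UniformlyFCompatibleIdealsLocalize}.
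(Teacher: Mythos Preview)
Your proof is correct and matches the paper's intended approach: the paper states this corollary without proof immediately after the proposition on minimal primes of the non-strongly $F$-regular locus, treating it as a direct consequence, and you have correctly supplied the details---the forward direction straight from the definitions (using that $R$ is a domain so a nonzero $f \in P$ lies in $R^{\circ}$), and the reverse direction via the preceding proposition together with the observation that the generic point is strongly $F$-regular.
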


We will prove later that the centers of log canonicity of a pair $(X, \Delta)$ of dense sharply $F$-pure type, reduce to centers  of $F$-purity for all but finitely many primes, see Theorem \ref{TheoremCentersOfLogCanonicityReduce}.

\begin{proposition}
Centers of sharp $F$-purity for $(R, \Delta, \ba_{\bullet})$ are precisely the prime ideals that are uniformly $(\Delta, \ba_{\bullet}, F)$-compatible.
\end{proposition}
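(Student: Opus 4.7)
The proof is a direct translation between the splitting condition at $P$ and the ideal-level compatibility condition, pivoting on Lemma \ref{UniformlyFCompatibleIdealsLocalize} and an appropriate rescaling of maps.

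First I would handle the implication that a uniformly $(\Delta, \ba_{\bullet}, F)$-compatible prime $P$ is a center of sharp $F$-purity. By Lemma \ref{UniformlyFCompatibleIdealsLocalize}, the ideal $PR_P \subseteq R_P$ is uniformly $(\Delta|_{\Spec R_P}, (\ba R_P)_{\bullet}, F)$-compatible. Now suppose for contradiction that $P$ is not a center of sharp $F$-purity, so there exist $e > 0$, $f \in PR_P$, $a \in \ba_{p^e-1}R_P$ and a splitting $\phi_P : \Fr{e}{R_P(\lceil (p^e-1)\Delta|_{\Spec R_P} \rceil)} \to R_P$ of the map sending $1$ to $fa$; in particular $\phi_P(fa) = 1$. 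But uniform compatibility of $PR_P$ forces $\phi_P(\Fr{e}{(a \cdot PR_P)}) \subseteq PR_P$, and since $f \in PR_P$ this would give $1 = \phi_P(fa) \in PR_P$, a contradiction.

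For the converse, assume $P$ is a center of sharp $F$-purity but, seeking a contradiction, that $P$ is not uniformly $(\Delta, \ba_{\bullet}, F)$-compatible. Then there exist $e > 0$, $a \in \ba_{p^e-1}$, $x \in P$, and an $R$-linear map $\phi : \Fr{e}{R(\lceil (p^e-1)\Delta\rceil)} \to R$ such that $\phi(\Fr{e}{(ax)}) = u \notin P$. Localize at $P$: the image $u/1$ is a unit in $R_P$, so $u^{-1}\phi_P$ (where $\phi_P$ denotes the localization of $\phi$) is an $R_P$-linear map $\Fr{e}{R_P(\lceil (p^e-1)\Delta|_{\Spec R_P}\rceil)} \to R_P$ sending $ax$ to $1$. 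That is precisely a splitting of the composition $R_P \to \Fr{e}{R_P(\lceil (p^e-1)\Delta|_{\Spec R_P}\rceil)}$ which sends $1$ to $ax$. Since $x \in P \subseteq PR_P$ and $a \in \ba_{p^e-1} \subseteq \ba_{p^e-1}R_P$, this contradicts the hypothesis that $P$ is a center of sharp $F$-purity (using the equivalent formulation noted in the remark following the definition).

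The only subtle point is compatibility of the two formulations of the center of sharp $F$-purity condition (one may pick $a \in \ba_{p^e-1}$ rather than $a \in \ba_{p^e-1}R_P$, and $f \in P$ rather than $f \in PR_P$); this is exactly what the remark immediately following the definition records, and is used without comment in both directions above. No serious obstacle arises: the argument is purely a matter of moving between an $R$-linear map and its localization, together with the observation that a map whose image on some element is a unit can, after inverting that unit, be read as a splitting.
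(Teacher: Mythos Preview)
Your proof is correct and follows essentially the same approach as the paper's: both directions are argued by contradiction, pivoting on the observation that a splitting sends some element of $\Fr{e}{(a\,PR_P)}$ to a unit, together with Lemma \ref{UniformlyFCompatibleIdealsLocalize} to pass between $R$ and $R_P$. The paper only writes out one direction and dismisses the other as ``similar,'' whereas you spell out both; your exposition is more complete but otherwise identical in content.
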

\begin{proof}
Suppose that $P$ is not a center of sharp $F$-purity.  Then for some $f \in P R_P$, some $e > 0$ and some $a \in \ba_{p^e - 1}$, the composition
\[
\xymatrix{
R_P \ar[r] & \Fr{e}{R_P} \ar[r]^-{\Fr{e}{(\times f a)}} & \Fr{e}{R_P} \ar[r] & \Fr{e}{R_P(\lceil (p^e - 1)\Delta|_{\Spec R_P} \rceil)}
}
\]
splits.  The splitting map sends an element of $a P R_P$ to $1$, and $1$ is not in $P R_P$.  This proves that $P$ cannot be uniformly $(\Delta, \ba_{\bullet}, F)$-compatible by Lemma \ref{UniformlyFCompatibleIdealsLocalize}.  The converse is similar.
\end{proof}

\begin{corollary}
\label{PropPrimaryDecompositionOfUniformlyFCompatibleIdeals}
Suppose that $(R, \Delta, \ba_{\bullet})$ is a triple and that $I$ is a radical ideal that is uniformly $(R, \Delta, \ba_{\bullet})$-compatible.  Then the associated primes of $I$ are centers of $F$-purity.  In particular, if $(R, \Delta, \ba_{\bullet})$ is sharply $F$-pure, then every associated prime of every uniformly $(\Delta, \ba_{\bullet}, F)$-compatible ideal is a center of sharp $F$-purity for the triple $(R, \Delta, \ba_{\bullet})$.
\end{corollary}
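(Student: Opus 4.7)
The plan is to reduce to the preceding proposition (that centers of sharp $F$-purity are precisely the prime uniformly $(\Delta, \ba_{\bullet}, F)$-compatible ideals) via a localization argument. Let $I$ be a radical uniformly $(\Delta, \ba_{\bullet}, F)$-compatible ideal, and let $P$ be an associated prime of $I$. Because $I$ is radical, all associated primes are minimal over $I$, and the primary decomposition takes the form $I = \bigcap_{j} P_j$ with $P = P_{j_0}$ one of the minimal primes.

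First I would localize at $P$. Since $P$ is minimal over $I$, any other minimal prime $P_j$ of $I$ is not contained in $P$, so $P_j R_P = R_P$, which gives $I R_P = P R_P$. By Lemma \ref{UniformlyFCompatibleIdealsLocalize}, the ideal $I R_P$ is uniformly $(\Delta|_{\Spec R_P}, (\ba R_P)_{\bullet}, F)$-compatible in $R_P$; hence $P R_P$ is uniformly compatible in $R_P$. Since $P R_P$ is a prime ideal in $R_P$, the preceding proposition tells us that $P R_P$ is a center of sharp $F$-purity for the localized triple on $R_P$.

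To return from the local statement to a statement about $P \subseteq R$, I would apply Lemma \ref{UniformlyFCompatibleIdealsUnlocalize} with $T = R \setminus P$: the preimage of $P R_P$ under $R \to R_P$ is exactly $P$ (as $P$ is prime), so $P$ itself is uniformly $(\Delta, \ba_{\bullet}, F)$-compatible in $R$. Invoking the preceding proposition once more identifies $P$ as a center of sharp $F$-purity for $(R, \Delta, \ba_{\bullet})$, which is what the first sentence asserts.

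For the final ``in particular'' claim, the additional hypothesis that $(R, \Delta, \ba_{\bullet})$ is sharply $F$-pure is used only to remove the radical hypothesis: by Corollary \ref{CorUniformlyFCompatibleDefinesFPureSubscheme}, every uniformly $(\Delta, \ba_{\bullet}, F)$-compatible ideal in a sharply $F$-pure triple is automatically radical, so the first part of the corollary applies directly. There is no real obstacle here; the only mild subtlety is the verification that $I R_P = P R_P$ for $P$ a minimal prime over the radical ideal $I$, which is a routine primary-decomposition check, and the observation that taking preimages along the localization map sends $P R_P$ back to $P$.
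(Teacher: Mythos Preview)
Your proof is correct and follows essentially the same approach as the paper's: localize at the associated prime $P$ so that $IR_P = PR_P$ becomes a uniformly compatible prime, then pull back via Lemma~\ref{UniformlyFCompatibleIdealsUnlocalize}. The paper compresses this into a single sentence, but your expanded version (including the explicit use of Lemma~\ref{UniformlyFCompatibleIdealsLocalize} and the handling of the ``in particular'' clause via Corollary~\ref{CorUniformlyFCompatibleDefinesFPureSubscheme}) spells out exactly the intended argument.
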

\begin{proof}
Simply localize at the prime in question and then use Lemma \ref{UniformlyFCompatibleIdealsUnlocalize}.
\end{proof}



\begin{corollary}
Suppose that $R$ is $F$-pure and that $P$ is a center of $F$-purity, consider $Z = V(P)$ as a subscheme of $X = \Spec R$.  Then $X$ is compatibly $F$-split along $Z$.
\end{corollary}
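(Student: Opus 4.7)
The plan is to combine two facts: first, since $R$ is $F$-pure, there exists some $e > 0$ and a map $\psi : \Fr{e}{R} \rightarrow R$ such that the composition $R \xrightarrow{F^e} \Fr{e}{R} \xrightarrow{\psi} R$ is the identity; this is exactly what $F$-purity gives us (in fact, under our $F$-finiteness assumption). Second, by the proposition immediately preceding this corollary, a center of $F$-purity $P$ is precisely a prime ideal that is uniformly $F$-compatible (taking $\Delta = 0$ and $\ba_i = R$ for all $i$).

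Unpacking Definition \ref{DefinitionUniformlyFCompatible} in this setting, uniform $F$-compatibility of $P$ means that \emph{every} $R$-linear map $\phi : \Fr{e}{R} \rightarrow R$ satisfies $\phi(\Fr{e}{P}) \subseteq P$. Applying this to the particular splitting $\psi$ produced above, we get $\psi(\Fr{e}{P}) \subseteq P$ for free. Comparing with Definition \ref{DefinitionCompatiblyFSplit}, this is exactly the assertion that $R$ is compatibly $F$-split along $P$, i.e., $X$ is compatibly $F$-split along $Z = V(P)$.

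There is essentially no obstacle here: the corollary is a direct consequence of the previous proposition once one recognizes that ``uniformly $F$-compatible'' is defined by a condition that quantifies over \emph{all} maps $\Fr{e}{R} \rightarrow R$, and hence in particular applies to any Frobenius splitting of $R$. The only thing worth double-checking is that $F$-purity of $R$ actually produces a splitting (rather than merely a pure inclusion), which holds because $R$ is $F$-finite, so $F$-purity coincides with $F$-split.
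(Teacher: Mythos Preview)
Your proof is correct and follows essentially the same approach as the paper: take any splitting $\psi : \Fr{e}{R} \to R$ (which exists since $R$ is $F$-pure and $F$-finite), and use the fact that $P$, being a center of $F$-purity, is uniformly $F$-compatible, so $\psi(\Fr{e}{P}) \subseteq P$. The paper's proof additionally remarks that in fact $\psi(\Fr{e}{P}) = P$ since $\psi(x^{p^e}) = x$, but this is an extra observation not needed for the statement.
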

\begin{proof}
Let $\phi : \Fr{e}{\O_X} \rightarrow \O_X$ be an $\O_X$-module homomorphism that sends $1$ to $1$.  By assumption, $\phi(\Fr{e}{P}) \subseteq P$.  Note that in this case we have equality, $\phi(\Fr{e}{P}) =  P$, since $\phi(x^p) = x$.
\end{proof}

Finally, again using this notion of centers of $F$-purity, we see that the radical of a uniformly $F$-compatible ideal is uniformly $F$-compatible.

\begin{proposition}
Suppose that $(R, \Delta, \ba_{\bullet})$ is a triple and suppose that $I$ is a uniformly $(\Delta, \ba_{\bullet}, F)$-compatible ideal.  Then $\sqrt{I}$ is also $(\Delta, \ba_{\bullet}, F)$-compatible.
\end{proposition}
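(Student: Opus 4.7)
The plan is to reduce the statement to a fact about associated primes. Since $R$ is Noetherian, $\sqrt{I}$ equals the intersection of the (finitely many) minimal primes of $I$, each of which lies in $\mathrm{Ass}(R/I)$. If I can show that every associated prime of $R/I$ is itself uniformly $(\Delta, \ba_{\bullet}, F)$-compatible, then Lemma \ref{EasyPropertiesOfUniformlyFCompatibleIdeals} (finite intersections of uniformly compatible ideals are uniformly compatible) immediately yields that $\sqrt{I}$ is uniformly $(\Delta, \ba_{\bullet}, F)$-compatible.

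To handle the associated primes, I would fix $P \in \mathrm{Ass}(R/I)$ and write $P = (I : x)$ for some $x \in R \setminus I$. Given arbitrary data $e > 0$, $a \in \ba_{p^e - 1}$, $p \in P$, and an $R$-linear map $\phi : \Fr{e}{R(\lceil (p^e - 1)\Delta \rceil)} \to R$, set $q := \phi(\Fr{e}{(ap)})$; the goal is to check $qx \in I$, since this means $q \in (I : x) = P$. The key calculation uses that the $R$-module structure on $\Fr{e}{R}$ satisfies $\Fr{e}{(x^{p^e} s)} = x \cdot \Fr{e}{s}$, so by $R$-linearity of $\phi$ one obtains
\[
qx \;=\; x \cdot \phi(\Fr{e}{(ap)}) \;=\; \phi(\Fr{e}{(a \cdot p x^{p^e})}).
\]
Because $p \in P = (I : x)$ gives $px \in I$, it follows that $p x^{p^e} = (px)\cdot x^{p^e - 1} \in I$, and therefore $a \cdot p x^{p^e} \in aI$. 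Uniform $(\Delta, \ba_{\bullet}, F)$-compatibility of $I$ then forces $\phi(\Fr{e}{(a p x^{p^e})}) \in I$, i.e.\ $qx \in I$.

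I do not expect any real obstacle in this argument: the associated primes are defined by a colon ideal, and the $R$-linearity of $\phi$ is exactly flexible enough to transport multiplication by $x$ across $\Fr{e}{(-)}$ as the $p^e$th power $x^{p^e}$. The only minor point to verify is that the \emph{same} element $a \in \ba_{p^e - 1}$ witnesses compatibility simultaneously for $I$ and for the colon $P = (I:x)$, but this is visible from the single computation above. Once this is done, $\sqrt{I}$ is the intersection of its (associated, hence compatible) minimal primes, and we are finished.
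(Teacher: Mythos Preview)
Your argument is correct, and it is more direct than the paper's. Both proofs reduce to showing that each minimal prime $Q$ of $I$ is uniformly $(\Delta,\ba_\bullet,F)$-compatible and then invoke Lemma~\ref{EasyPropertiesOfUniformlyFCompatibleIdeals}. The paper, however, proceeds by localization and a dichotomy: either $IR_Q = QR_Q$, in which case compatibility of $Q$ follows from Lemmas~\ref{UniformlyFCompatibleIdealsLocalize} and~\ref{UniformlyFCompatibleIdealsUnlocalize}, or $IR_Q \subsetneq QR_Q$, in which case Corollary~\ref{CorUniformlyFCompatibleDefinesFPureSubscheme} forces the localized triple to fail sharp $F$-purity, so $Q$ is a center of sharp $F$-purity and hence compatible. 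Your colon-ideal computation bypasses this case split entirely, needs no localization, and in fact shows the stronger statement that \emph{every} associated prime of $R/I$ (not just the minimal ones) is uniformly $(\Delta,\ba_\bullet,F)$-compatible, without any sharply $F$-pure hypothesis on the ambient triple. The paper's route, by contrast, illustrates how the notion of center of $F$-purity organizes the argument conceptually.
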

\begin{proof}
It will be enough to show that the minimal associated primes of $I$ are uniformly $(\Delta, \ba_{\bullet}, F)$-compatible, since we can then use Lemma \ref{EasyPropertiesOfUniformlyFCompatibleIdeals}.  Therefore, let $Q$ be a prime corresponding to a minimal primary component of the primary decomposition of $I$.  Notice that by assumption $\sqrt {I R_Q} = Q R_Q$.  There are now two possibilities:
\begin{itemize}
\item[(i)]  The first is that $I R_Q = Q R_Q$, in which case $Q$ is uniformly $(\Delta, \ba_{\bullet}, F)$-compatible by Lemmas \ref{UniformlyFCompatibleIdealsLocalize} and \ref{UniformlyFCompatibleIdealsUnlocalize}.
\item[(ii)]  If $I R_Q \neq Q R_Q$, then we see that the triple $(R_Q, \Delta|_{\Spec R_{Q}}, (\ba R_Q)_{\bullet})$ is not sharply $F$-pure by Corollary \ref{CorUniformlyFCompatibleDefinesFPureSubscheme}, which implies that $Q$ is a center of sharp $F$-purity for the triple $(R, \Delta, \ba_{\bullet})$.  But then $Q$ is uniformly $(\Delta, \ba_{\bullet}, F)$-compatible.
\end{itemize}
\end{proof}

Using the technique of Fedder's criterion, we can also easily characterize centers of sharp $F$-purity for pairs $(R, \ba_{\bullet})$, see Proposition \ref{PropFedderCriterionForCompatible}.

\section{Relations to $F$-stable submodules of $H^d_{\bm}(R)$ and finiteness of centers of sharp $F$-purity}
\label{SectionRelationsToFStableSubmodulesAndFinitenessOfCenters}

In this section we will show that there are only finitely many centers of $F$-purity in the case of an $F$-pure local ring, and give an alternate explanation of why the notions we have been considering are very closely related to the theory of $F$-stable submodules of $H^d_{\bm}(R)$ (as studied, for example, in \cite{SmithFRatImpliesRat}).  In fact, it is by applying the machinery that was used to study the $F$-stable submodules of $H^d_{\bm}(R)$ by Enescu-Hochster and Sharp that we are able to prove that there are only finitely many centers of $F$-purity for an $F$-pure pair.

\begin{lemma}
\label{LemmaFSubmoduleLocalCharacterizationOfCenterOfFPurity}
Suppose that $R$ is an $F$-finite ring and that $(R, \Delta, \ba_{\bullet})$ is a triple.  Fix an ideal $I \subseteq R$.  The following are equivalent:
\begin{itemize}
\item[(a)]  $I$ is uniformly $(\Delta, \ba_{\bullet}, F)$-compatible.
\item[(b)]  For every $e > 0$ and every $a \in \ba_{p^e -1}$ and $f \in I$, the composition
\[
\small
\xymatrix@C=9pt{
\Hom_{R}(\Fr{e}{R(\lceil (p^e - 1) \Delta \rceil)}, R) \ar[r] & \Hom_R(\Fr{e}{R}, R) \ar[rr]^-{\Fr{e}{(\times a f)}} & & \Hom_{R}(\Fr{e}{R}, R) \ar[r] & \Hom_{R}(R, R) = R \ar[r] & R/I
}
\]
is zero.
Here the first three maps in the composition are simply $\Hom_R(\blank, R)$ applied to
\[
\xymatrix@C=35pt{R \ar[r] & \Fr{e}{R} \ar[r]^-{\Fr{e}{(\times a f)}} & \Fr{e}{R} \ar[r] & \Fr{e}{R(\lceil (p^e - 1) \Delta \rceil)}}.
\]

\vskip 10pt
\item[(*)] If, in addition, we assume that $R$ is a local ring with maximal ideal $\bm$ and we use $E_R$ to denote the injective hull of the residue field $R/\bm$, then condition (c) below is also equivalent to (a) and (b).
\item[(c)]   For every $e > 0$ and every $a \in \ba_{p^e -1}$ and every $f \in I$, the composition
\[
\xymatrix{
E_{R/I} \ar[r] & E_R \ar[r] & E_R \tensor_R \Fr{e}{R} \ar[rr]^-{\Fr{e}{(\times a f)}} & & E_R \tensor_R \Fr{e}{R} \ar[r] & E_R \tensor_R \Fr{e}{R(\lceil (p^e - 1) \Delta \rceil)}
}
\]
is zero.
\end{itemize}
\end{lemma}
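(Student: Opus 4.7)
I would prove (a) $\Leftrightarrow$ (b) by direct unpacking, and (b) $\Leftrightarrow$ (c) by Matlis duality.  For (a) $\Leftrightarrow$ (b), given an arbitrary $\phi \in \Hom_R(\Fr{e}{R(\lceil (p^e - 1)\Delta \rceil)}, R)$, I would trace the image of $\phi$ through the composition in (b): the restriction is $\phi|_{\Fr{e}{R}}$; precomposition with $\Fr{e}{(\times af)}$ sends this to $g \mapsto \phi(afg)$; and precomposition with $R \to \Fr{e}{R}$, $1 \mapsto 1$, evaluates at $af$.  Hence condition (b) says exactly that $\phi(af) \in I$ for every $\phi$, every $e > 0$, every $a \in \ba_{p^e - 1}$ and every $f \in I$, which is the definition of uniformly $(\Delta, \ba_{\bullet}, F)$-compatibility in Definition \ref{DefinitionUniformlyFCompatible}.

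For (b) $\Leftrightarrow$ (c), the key input is the natural isomorphism
\[
\mu_M : M \otimes_R E_R \xrightarrow{\sim} \Hom_R(\Hom_R(M, R), E_R), \qquad m \otimes x \mapsto \bigl(\phi \mapsto \phi(m) x\bigr),
\]
valid for every finitely presented $R$-module $M$.  This follows from applying $\Hom_R(\blank, R)$ then $\Hom_R(\blank, E_R)$ to a finite free presentation of $M$, using injectivity of $E_R$ and the five-lemma.  Since $R$ is $F$-finite and Noetherian, $\mu$ applies to both $M = \Fr{e}{R}$ and $M = \Fr{e}{R(\lceil (p^e - 1)\Delta \rceil)}$.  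Now apply $\Hom_R(\blank, E_R)$ to the composition in (b) and use $\mu$ termwise; by naturality of $\mu$, restriction of Homs becomes the inclusion of tensor products, precomposition with $\Fr{e}{(\times af)}$ becomes multiplication by $af$ on the $\Fr{e}{R}$ factor, and the projection $R \to R/I$ dualizes to the inclusion $E_{R/I} = \Ann_{E_R}(I) \hookrightarrow E_R$.  The resulting composition is precisely the composition displayed in (c).

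It remains to observe that Matlis dualization detects vanishing for maps between finitely generated $R$-modules: the source $\Hom_R(\Fr{e}{R(\lceil (p^e-1)\Delta\rceil)}, R)$ and target $R/I$ of the composition in (b) are both finitely generated over $R$, and for any such map $f$ one has $\Hom_R(f, E_R) = 0$ if and only if $f \otimes_R \hat R = 0$ (by Matlis duality over $\hat R$), if and only if $f = 0$ (by faithful flatness of $R \to \hat R$ for the Noetherian local ring $R$).  The step I expect to require the most care is the arrow-by-arrow identification, via naturality of $\mu$, of the Matlis dual of the composition in (b) with the composition in (c); in particular, verifying compatibility with the twist by $\Fr{e}{(\times af)}$ and with the inclusion $\Fr{e}{R} \hookrightarrow \Fr{e}{R(\lceil (p^e - 1)\Delta \rceil)}$, rather than merely matching the source and target of the two compositions.
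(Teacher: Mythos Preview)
Your proposal is correct and follows essentially the same approach as the paper: (a) $\Leftrightarrow$ (b) by directly unpacking the composition to see it evaluates to $\phi(af)$ modulo $I$, and (b) $\Leftrightarrow$ (c) by Matlis duality.  The paper is much terser on the duality step (it simply invokes ``the usual application of Matlis duality'' and cites \cite[Lemma 3.4]{TakagiInversion}), whereas you spell out the natural isomorphism $M \otimes_R E_R \cong \Hom_R(\Hom_R(M,R), E_R)$ for finitely presented $M$ and the faithful-flatness/cogenerator argument for detecting vanishing; both are fine.
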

\begin{proof}
By the usual application of Matlis duality, it is clear that conditions (b) and (c) are equivalent; see for example \cite[Lemma 3.4]{TakagiInversion}.  On the other hand, $I$ is uniformly $(\Delta, \ba_{\bullet}, F)$-compatible if and only if for every map $\phi : \Fr{e}{R(\lceil (p^e - 1) \Delta \rceil)} \rightarrow R$ and every $a \in \ba_{p^e -1}$ we have $\phi(\Fr{e}{aI}) \subseteq I$.  But this implies that for any $f \in I$, the image of any composition
\[
\xymatrix{R \ar[r] & \Fr{e}{R} \ar[rr]^-{\Fr{e}{(\times a f)}} & & \Fr{e}{R} \ar[r] &  \Fr{e}{R(\lceil (p^e - 1) \Delta \rceil)} \ar[r]^-{\phi} & R}
\]
is contained in $I$, which certainly implies that the map in (b) is zero.  Conversely, if (a) is false, then there exists some $a \in \ba_{p^e -1}$, $f \in I$ and $\phi : \Fr{e}{R(\lceil (p^e - 1) \Delta \rceil)} \rightarrow R$ such that $\phi(a f) \notin I$.  But then the associated composition from (b) is non-zero.
\end{proof}

\begin{remark}
Suppose that $(R, \bm)$ is a quasi-Gorenstein local, $\Delta = 0$, $\ba_i = R$ for all $i \geq 0$ and $I \subset R$ is an ideal.  Also set $N = \Ann_{E_R} I \subset E_R \cong H^{d}_{\bm}(R)$.  Then we see that $I$ is uniformly $F$-compatible if and only if $N$ is a $F$-stable submodule of $H^{d}_{\bm}(R)$.
\end{remark}

This implies that the question of whether there are finitely many centers of $F$-purity is closely related to the question of whether there are finitely many $F$-stable submodules of $H^d_{\bm}(R)$.  Thus we obtain the following corollary:

\begin{corollary}
Let $R$ be a reduced local ring.  Suppose further that $(R, \Delta, \ba_{\bullet})$ is sharply $F$-pure.  Then there are only finitely many uniformly $(\Delta, \ba_{\bullet}, F)$-compatible ideals $I$.  In particular, there are only finitely many centers of $F$-purity for $(R, \Delta, \ba_{\bullet})$.
\end{corollary}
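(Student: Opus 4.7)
The approach is to dualize the problem via Matlis duality and invoke the finiteness theorems for Frobenius-stable submodules due to Enescu--Hochster \cite{EnescuHochsterTheFrobeniusStructureOfLocalCohomology} and Sharp \cite{SharpGradedAnnihilatorsOfModulesOverTheFrobeniusSkewPolynomialRing}, as anticipated in Remark \ref{RemarkLinksWithFESubmodules}.

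First I would reduce to the complete case: by Lemmas \ref{LemmaUniformlyCompatibleIdealsComplete} and \ref{LemmaUniformlyCompatibleIdealsUncomplete}, $I \mapsto I\hat R$ is an injection from the uniformly $(\Delta, \ba_{\bullet}, F)$-compatible ideals of $R$ to those of $\hat R$, and sharp $F$-purity survives completion, since the element $d$ of Definition \ref{DefinitionSharplyFPurePair} continues to split the corresponding map over $\hat R$. So we may assume $(R, \bm)$ is a complete $F$-finite local ring. Write $E = E_R$ for the injective hull of the residue field.

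Next I would use Lemma \ref{LemmaFSubmoduleLocalCharacterizationOfCenterOfFPurity} to convert the problem: uniformly $(\Delta, \ba_{\bullet}, F)$-compatible ideals $I \subseteq R$ correspond bijectively, via $I \mapsto N := \Ann_E(I)$, to those $R$-submodules $N \subseteq E$ with the property that for every $e > 0$ and every $a \in \ba_{p^e - 1}$, the Frobenius-linear map
\[
\mu_{e, a} : E \longrightarrow E \tensor_R \Fr{e}{R(\lceil (p^e - 1)\Delta \rceil)}, \qquad z \longmapsto z \tensor a,
\]
sends $N$ into the $\Ann_R(N)$-torsion of the target. The maps $\mu_{e,a}$ assemble into an action on $E$ by the graded (noncommutative) $R$-algebra $\mathcal T := \bigoplus_{e \geq 0} \Fr{e}{\ba_{p^e - 1}}$, whose multiplication is induced by composition of Frobenius maps and is well defined by the containment $\ba_{p^e - 1}^{p^{e'}}\ba_{p^{e'} - 1} \subseteq \ba_{p^{e + e'} - 1}$ built into the definition of a graded system of ideals. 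Under this dictionary, uniformly $(\Delta, \ba_{\bullet}, F)$-compatible ideals correspond to the $R$-annihilators $\Ann_R M$ of graded $\mathcal T$-submodules $M \subseteq E$.

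Finally, sharp $F$-purity of the triple supplies some $e_0 > 0$ and some $d \in \ba_{p^{e_0} - 1}$ for which the action of $\Fr{e_0}{d} \in \mathcal T$ on $E$ is injective -- this injectivity is the Matlis dual of the splitting in Definition \ref{DefinitionSharplyFPurePair}, and by Lemma \ref{LemmaSharplyFPureImpliesInfiniteSplittings} all iterates act injectively as well. Restricting the $\mathcal T$-action to the subalgebra generated over $R$ by $\Fr{e_0}{d}$ realizes the Artinian module $E$ as a module over a Frobenius skew polynomial ring on which the generator acts injectively, which is precisely the setting to which the theorems of \cite{SharpGradedAnnihilatorsOfModulesOverTheFrobeniusSkewPolynomialRing} and \cite{EnescuHochsterTheFrobeniusStructureOfLocalCohomology} apply. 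Those theorems guarantee that the set of $R$-annihilators of submodules of $E$ stable under this single endomorphism is finite; since every uniformly $(\Delta, \ba_{\bullet}, F)$-compatible ideal lies in that (a priori larger) finite list, there are only finitely many such ideals, and hence in particular only finitely many centers of sharp $F$-purity. The main obstacle is this last step: one must verify that Sharp's and Enescu--Hochster's finiteness, stated in the untwisted classical setting, really yields what we want after adjoining the twists from $\Delta$ and $\ba_{\bullet}$; this goes through because full $\mathcal T$-stability is a \emph{stronger} requirement than stability under $\Fr{e_0}{d}$ alone, so the finite Sharp bound produced by the single injective Frobenius dominates the count of uniformly compatible ideals.
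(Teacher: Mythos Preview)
Your route via Matlis duality is genuinely different from the paper's. The paper does not dualize at all: it simply observes that under the sharp $F$-purity hypothesis the uniformly $(\Delta,\ba_\bullet,F)$-compatible ideals are radical (Corollary \ref{CorUniformlyFCompatibleDefinesFPureSubscheme}), closed under sums and intersections (Lemma \ref{EasyPropertiesOfUniformlyFCompatibleIdeals}), and closed under taking minimal primes (Proposition \ref{PropPrimaryDecompositionOfUniformlyFCompatibleIdeals}), and then invokes the purely order-theoretic \cite[Theorem 3.1]{EnescuHochsterTheFrobeniusStructureOfLocalCohomology}, which says any such collection of radical ideals in a noetherian ring is finite. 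This is shorter and avoids completion, duality, and any module theory on $E$; your approach, by contrast, explains \emph{why} the finiteness holds by locating the compatible ideals inside Sharp's framework of graded annihilators, and the paper does note that an alternate proof along Sharp's lines exists.

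There is, however, a real gap in your execution when $\Delta\neq 0$. Your map $\mu_{e_0,d}$ lands in $E\otimes_R \Fr{e_0}{R(\lceil (p^{e_0}-1)\Delta\rceil)}$, which is \emph{not} $E$ (nor canonically $\Fr{e_0}{E}$), so $E$ is not in any evident way a module over a Frobenius skew polynomial ring via $\mu_{e_0,d}$, and Sharp's hypotheses are not met. Your closing sentence addresses only the $\ba_\bullet$-twist (correctly: stability under one element is weaker than full $\mathcal T$-stability) but does not address this $\Delta$-twist. The fix is to use the splitting itself rather than its source map: if $\psi:\Fr{e_0}{R(\lceil (p^{e_0}-1)\Delta\rceil)}\to R$ is the splitting sending $d\mapsto 1$, then $\phi:=\psi\circ\Fr{e_0}{(\times d)}\big|_{\Fr{e_0}{R}}:\Fr{e_0}{R}\to R$ is a surjective $R$-linear map with $\phi(\Fr{e_0}{I})\subseteq I$ for every uniformly $(\Delta,\ba_\bullet,F)$-compatible $I$ (this is exactly Definition \ref{DefinitionUniformlyFCompatible} applied to $\psi$ and $a=d$). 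Matlis-dualizing $\phi$ gives an honest injective $p^{e_0}$-linear endomorphism of $E$ under which each $\Ann_E(I)$ is stable, and now Sharp's finiteness of graded annihilators applies directly.
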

\begin{proof}
We have observed previously that uniformly $(\Delta, \ba_{\bullet}, F)$-compatible ideals are reduced, see Corollary \ref{CorUniformlyFCompatibleDefinesFPureSubscheme}, and also that they are closed under sum and intersection, see Lemma \ref{EasyPropertiesOfUniformlyFCompatibleIdeals}.  Proposition \ref{PropPrimaryDecompositionOfUniformlyFCompatibleIdeals} implies that the set of uniformly $(\Delta, \ba_{\bullet}, F)$-compatible ideals are closed under taking primary decomposition.  But then it follows that there can be at most finitely many such ideals by \cite[Theorem 3.1]{EnescuHochsterTheFrobeniusStructureOfLocalCohomology}.  One can obtain an alternate proof using the techniques of \cite{SharpGradedAnnihilatorsOfModulesOverTheFrobeniusSkewPolynomialRing}.
\end{proof}



\begin{remark}
\label{RemarkFIdealsAreCompatible}
Suppose that $(R, \bm)$ is an $F$-finite local ring and suppose that $N \subseteq H^d_{\bm}(R)$ is an $F$-stable submodule of $H^d_{\bm}(R)$.  The proof of \cite[Theorem 4.1]{EnescuHochsterTheFrobeniusStructureOfLocalCohomology} shows that if $I = \Ann_R N$, then $I$ is uniformly $F$-compatible (they only state that $I$ is ``compatible'' with splittings, but the proof of the general case is the same).  In fact, the same proof can be used to show the following result (also compare with the proof of Theorem \ref{TheoremBigVsNonFinitisticTestIdeals}).
\end{remark}

\begin{proposition}
Suppose that $M$ is an $R$-module and $(R, \Delta, \ba_{\bullet})$ is a triple.   For all $e \geq 0$ and all $a \in \ba_{p^e - 1}$, use $\phi_{e, a} : R \rightarrow \Fr{e}{R(\lceil (p^e - 1)\Delta \rceil)}$ to denote the map which sends $1$ to $a$.  Suppose that $N \subseteq M$ and that $J = \Ann_R N$.  Finally suppose that the image of $N$ under the map
\[
\gamma_{e, a} := M \tensor \phi_{e, a} : M \rightarrow M \tensor_R \Fr{e}{R(\lceil (p^e - 1)\Delta \rceil)}
\]
is annihilated by $\Fr{e}{J}$ for all $e$ and $a$.  Then $J$ is uniformly $(\Delta, \ba_{\bullet}, F)$-compatible.
\end{proposition}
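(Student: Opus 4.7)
The plan is to verify the uniform compatibility condition directly from its definition: given $e>0$, $a\in\ba_{p^e-1}$, an $R$-linear map $\phi:\Fr{e}{R(\lceil(p^e-1)\Delta\rceil)}\to R$, and an arbitrary $x\in J$, I need to show $\phi(\Fr{e}{(ax)})\in J=\Ann_R N$. Equivalently, I must show that $\phi(\Fr{e}{(ax)})\cdot n=0$ for every $n\in N$.

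The natural bridge between the hypothesis and this conclusion is the functor $M\otimes_R(-)$ applied to $\phi$. First I would observe that for any $n\in N$, the element $\gamma_{e,a}(n)\in M\tensor_R\Fr{e}{R(\lceil(p^e-1)\Delta\rceil)}$ is simply $n\tensor \Fr{e}{a}$. The hypothesis then says that $\Fr{e}{J}$ annihilates this element; in particular, applying $\Fr{e}{(\times x)}$ for any $x\in J$ gives
\[
n\tensor \Fr{e}{(ax)}\;=\;\Fr{e}{x}\cdot(n\tensor\Fr{e}{a})\;=\;0
\]
in $M\tensor_R\Fr{e}{R(\lceil(p^e-1)\Delta\rceil)}$.

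Next, I would apply the induced map $\mathrm{id}_M\tensor\phi:M\tensor_R\Fr{e}{R(\lceil(p^e-1)\Delta\rceil)}\to M\tensor_R R=M$ to this vanishing element. Under the canonical identification $M\tensor_R R\cong M$, the element $n\tensor\Fr{e}{(ax)}$ maps to $\phi(\Fr{e}{(ax)})\cdot n$. Thus $\phi(\Fr{e}{(ax)})\cdot n=0$ for every $n\in N$, so $\phi(\Fr{e}{(ax)})\in\Ann_R N=J$, as required.

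There is no real obstacle here; the whole content is recognizing that the hypothesis is phrased exactly so that $\mathrm{id}_M\tensor\phi$ transports the annihilation statement on $M\tensor_R\Fr{e}{R(\lceil(p^e-1)\Delta\rceil)}$ into the annihilation statement $J\cdot N=0$ back on $M$. The only mild care needed is to treat the twist by $\lceil(p^e-1)\Delta\rceil$ correctly, but since $\phi$ is given as a map out of that twisted module, and $J\subseteq R$ sits inside $\Fr{e}{R}\subseteq\Fr{e}{R(\lceil(p^e-1)\Delta\rceil)}$ in the usual way, the argument goes through verbatim and simultaneously reproduces the Enescu--Hochster observation referenced in Remark \ref{RemarkFIdealsAreCompatible}.
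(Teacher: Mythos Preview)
Your proof is correct and follows essentially the same approach as the paper's: both arguments tensor the given map $\phi$ with $M$ to obtain $\mathrm{id}_M\otimes\phi$, observe that $n\otimes\Fr{e}{(ax)}=0$ by the annihilation hypothesis, and then read off $\phi(\Fr{e}{(ax)})\cdot n=0$ from the image under $\mathrm{id}_M\otimes\phi$.
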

The argument of the proof is essentially the same as in \cite[Theorem 4.1]{EnescuHochsterTheFrobeniusStructureOfLocalCohomology}, we simply generalize it to the context of triples $(R, \Delta, \ba_{\bullet})$.  We provide it for the convenience of the reader.
\begin{proof}
Consider a map $\psi : \Fr{e}{R(\lceil (p^e - 1)\Delta \rceil)} \rightarrow R$ and note that $\psi$ induces a map
\[ \delta : M \tensor_R \Fr{e}{R(\lceil (p^e - 1)\Delta \rceil)} \rightarrow M \tensor_R R \cong M \]
We need to show that $\psi(\Fr{e}{(\ba_{p^e - 1} J)}) \subseteq J = \Ann_R(N)$. Therefore choose $n \in N$, $a \in \ba_{p^e - 1}$ and $x \in \Fr{e}{J} \subset \Fr{e}{R} \subseteq \Fr{e}{R(\lceil (p^e - 1)\Delta \rceil)}$.  We note that $\delta(n \tensor a x) = n \tensor \psi(a x) = \psi(a x) n \in M \cong M \tensor R$.  But $n \tensor ax = (n \tensor a).x = 0 \in M \tensor_R \Fr{e}{R(\lceil (p^e - 1)\Delta \rceil)}$ by hypothesis.
\end{proof}

\section{Relations to big test ideals and multiplier ideals}
\label{SectionRelationToBigTestIdealsAndMultiplierIdeals}

In this section, we show that big (non-finitistic) test ideals are uniformly $F$-compatible and that the big test ideal is the smallest uniformly $F$-compatible ideal whose intersection with $R^{\circ}$ is non-trivial.  Note that the usual (finitistic) test ideal is uniformly $F$-compatible by \cite{VassilevTestIdeals} or, in the case of pairs $(R, \ba_{\bullet})$, by Corollary \ref{CorFinitisticTestIdealIsCompatible} (compare with \cite{SchwedeSharpTestElements}.  See Section \ref{SectionAppendixOnBigTestIdeals} for basic definitions of big (non-finitistic) test ideals, or see \cite{HaraYoshidaGeneralizationOfTightClosure} or \cite{LyubeznikSmithCommutationOfTestIdealWithLocalization} for an alternate point of view.

\begin{proposition}  \cite[Theorem 6.2]{LyubeznikSmithCommutationOfTestIdealWithLocalization}, \cite{SmithDModuleStructure} \label{PropTestIdealIsCompatible}
Given a triple $(R, \Delta, \ba_{\bullet})$, the ideal $\tau_b(\Delta, \ba_{\bullet})$ is uniformly $(\Delta, \ba_{\bullet}, F)$-compatible.
\end{proposition}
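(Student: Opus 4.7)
The plan is to take $c\in\tau_b(\Delta,\ba_\bullet)$, fix $e\geq 0$, $a\in\ba_{p^e-1}$, and an $R$-linear map $\phi\colon F^e_*R(\lceil(p^e-1)\Delta\rceil)\to R$, and show that $c':=\phi(F^e_*(ac))$ is again a big sharp test multiplier. So, given any containment $N\subseteq M$ of $R$-modules, any $u\in N^{*\Delta,\ba_\bullet}_M$, any $e'\geq 0$, and any $a'\in \ba_{p^{e'}-1}$, I need to verify that $c'\cdot a'\cdot u^{p^{e'}}\in N^{[p^{e'}]\Delta}_M$.

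The key observation is the same rounding/graded system calculation used in the proof of Theorem \ref{TheoremBigVsNonFinitisticTestIdeals}: the element $b:=a\cdot(a')^{p^e}$ lies in $\ba_{p^e-1}\cdot\ba_{p^e-1}^{[p^e]}\subseteq \ba_{p^{e+e'}-1}$ since $(a')^{p^e}\in \ba_{p^e(p^{e'}-1)}=\ba_{p^{e+e'}-p^e}$. Because $c$ is a big sharp test multiplier and $u\in N^{*\Delta,\ba_\bullet}_M$, the definition at level $e+e'$ then gives
\[
c\cdot b\cdot u^{p^{e+e'}}\;\in\; N^{[p^{e+e'}]\Delta}_M.
\]

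Now I apply $\phi$ after twisting, exactly as in the proof of Theorem \ref{TheoremBigVsNonFinitisticTestIdeals} (and in the spirit of \cite[Proof of Lemma 2.5]{TakagiInterpretationOfMultiplierIdeals}). Concretely, $\phi$ induces a map $\phi'\colon F^e_*R(\lceil(p^{e+e'}-1)\Delta\rceil)\to R(\lceil(p^{e'}-1)\Delta\rceil)$ using the inequality $\lceil(p^e-1)\Delta\rceil+p^e\lceil(p^{e'}-1)\Delta\rceil\geq \lceil(p^{e+e'}-1)\Delta\rceil$, and then $F^{e'}_*\phi'$ gives
\[
\Psi_{e'}\colon F^{e+e'}_*R(\lceil(p^{e+e'}-1)\Delta\rceil)\longrightarrow F^{e'}_*R(\lceil(p^{e'}-1)\Delta\rceil).
\]
Tensoring with $M$ yields an $F^{e'}_*R$-linear map that carries $N^{[p^{e+e'}]\Delta}_M$ into $N^{[p^{e'}]\Delta}_M$. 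The $R$-linearity of $\phi$ produces $\Psi_{e'}(F^{e+e'}_*(c\cdot a\cdot (a')^{p^e}))=F^{e'}_*\bigl(a'\cdot\phi(F^e_*(ca))\bigr)=F^{e'}_*(a' c')$, so tracing $cbu^{p^{e+e'}}$ through this map gives precisely $c'\cdot a'\cdot u^{p^{e'}}$, which therefore lies in $N^{[p^{e'}]\Delta}_M$, as required.

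The main technical point—really the only obstacle—is the bookkeeping with the divisorial twists when defining $\Psi_{e'}$ and verifying the equation $\Psi_{e'}(F^{e+e'}_*(ca(a')^{p^e}))=F^{e'}_*(a'c')$; aside from an $R$-linearity check for $\phi$ (pulling $(a')^{p^e}$ out from inside $F^e_*$), everything else is the rounding inclusion for the graded system $\ba_\bullet$ and the definition of a big sharp test multiplier applied at level $e+e'$.
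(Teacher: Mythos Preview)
Your proof is correct and takes a genuinely different route from the paper. The paper first reduces to the complete local case (using the localization and completion lemmas for both $\tau_b$ and for uniformly $F$-compatible ideals), then invokes the Matlis-dual characterization of uniform $F$-compatibility (condition (c) of Lemma~\ref{LemmaFSubmoduleLocalCharacterizationOfCenterOfFPurity}) together with the identification $\tau_b(\Delta,\ba_\bullet)=\Ann_R(0^{*\Delta,\ba_\bullet}_{E_R})$ from Theorem~\ref{TheoremBigVsNonFinitisticTestIdeals}. You instead work directly with the definition of $\tau_b$ as the ideal of big sharp test multipliers: given $c\in\tau_b$ and data $(e,a,\phi)$, you show $c'=\phi(F^e_*(ac))$ is again a big sharp test multiplier by the same twisting/rounding maneuver already used in the proof of Theorem~\ref{TheoremBigVsNonFinitisticTestIdeals}. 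This avoids both the local/complete reduction and the passage through $E_R$, and it makes transparent that Proposition~\ref{PropTestIdealIsCompatible} is really the same computation as Theorem~\ref{TheoremBigVsNonFinitisticTestIdeals} applied once more. (One small typo: the intermediate containment should read $\ba_{p^e-1}\cdot(\ba_{p^{e'}-1})^{p^e}$, not $\ba_{p^e-1}\cdot\ba_{p^e-1}^{[p^e]}$; your subsequent justification is correct.) The paper's approach buys a cleaner one-line conclusion once the duality is in place; yours is more self-contained and shows the result does not actually require the local or complete hypotheses at any stage.
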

\begin{proof}
By Lemmas \ref{UniformlyFCompatibleIdealsLocalize} and \ref{UniformlyFCompatibleIdealsUnlocalize} and the fact that the big test ideal behaves well with respect to localization, we see that it is harmless to assume that $R$ is a local ring.  Likewise, by Lemmas \ref{LemmaUniformlyCompatibleIdealsUncomplete} and \ref{LemmaUniformlyCompatibleIdealsComplete} and since the big test ideal behaves well with respect to completion, see \cite{HaraTakagiOnAGeneralizationOfTestIdeals}, we see that it is harmless to assume that $R$ is complete.

We need to show that the composition
\[
\xymatrix@C=15pt{
E_{R/\tau_b(\Delta, \ba_{\bullet})} \ar[r] & E_R \ar[r] & E_R \tensor_R \Fr{e}{R} \ar[rr]^-{F^e_*{(\times a f)}} & & E_R \tensor_R \Fr{e}{R} \ar[r] & E_R \tensor_R \Fr{e}{R(\lceil (p^e - 1) \Delta \rceil)}
}
\]
is zero for every $e \geq 0$ and every $f \in \tau_b(\Delta, \ba_{\bullet})$.  But this follows immediately from the fact that $\tau_b(\Delta, \ba_{\bullet})$ is made up of big sharp test elements for $(R, \Delta, \ba_{\bullet})$ and that $0^{*\Delta, \ba_{\bullet}}_{E_R} = E_{R/\tau_b(\Delta, \ba_{\bullet})}$.
\end{proof}

\begin{remark}
It is natural to ask whether the finitistic test ideal of a triple $(R, \Delta, \ba_{\bullet})$ is uniformly $F$-compatible in the case that $\Delta \neq 0$.  If one defines $\tau(R, \Delta, \ba_{\bullet})$ to be the ideal made up of ``test elements for finite modules'' for $(R, \Delta, \ba_{\bullet})$, then one can use an argument similar to the one in Theorem \ref{TheoremBigVsNonFinitisticTestIdeals} to show that $\tau(R, \Delta, \ba_{\bullet})$ is uniformly $F$-compatible.  However, if one defines $\tau(R, \Delta, \ba_{\bullet})$ to be $\Ann_R 0^{* \Delta, \ba_{\bullet}, fg}$ then I do not see an easy approach to the result.  On the other hand, if $K_R + \Delta$ is $\bQ$-Cartier then one can use the technique of \cite[Theorem 2.8]{TakagiInterpretationOfMultiplierIdeals}.
\end{remark}

\begin{theorem}
\label{TheoremTestIdealIsSmallestFCompatible}
Given a triple $(R, \Delta, \ba_{\bullet})$, the ideal $\tau_b(\Delta, \ba_{\bullet})$ is the smallest ideal which is uniformly $(\Delta, \ba_{\bullet}, F)$-compatible and whose intersection with $R^{\circ}$ is non-empty.
\end{theorem}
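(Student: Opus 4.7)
The first half of the statement is already in hand: Proposition \ref{PropTestIdealIsCompatible} says that $\tau_b(\Delta,\ba_\bullet)$ is uniformly $(\Delta,\ba_\bullet,F)$-compatible, and Lemma \ref{LemmaBigSharpTestElementsExist} produces a big sharp test element $d \in R^\circ$, which lies in $\tau_b(\Delta,\ba_\bullet)$ by definition, so $\tau_b(\Delta,\ba_\bullet) \cap R^\circ \neq \emptyset$. The real content is the minimality assertion, so let $J$ be uniformly $(\Delta,\ba_\bullet,F)$-compatible with $J\cap R^\circ \neq \emptyset$, and pick $c' \in J \cap R^\circ$.

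My plan is to produce a big sharp test element $d$ that already lies in $J$. Revisiting the construction in Lemma \ref{LemmaBigSharpTestElementsExist} together with Remark \ref{RemarkLocalizingTestElementsExist}: we may choose the auxiliary element $c \in \ba_1 \cap R^\circ$ so that $R_c$ is regular and $\Supp(\Div(c))$ contains $\Supp(\Delta)$. Replacing $c$ by $cc'$ keeps all of these properties intact (since $R_{cc'}$ is a further localization of $R_c$, $cc' \in \ba_1$, and $c' \in R^\circ$ does not shrink the support), and the construction of Lemma \ref{LemmaBigSharpTestElementsExist} then yields a big sharp test element of the form $d = (cc')^m$. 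Because $c' \in J$, we have $d \in J$.

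Next I reduce to the local case. By Lemma \ref{UniformlyFCompatibleIdealsLocalize} (for $J$ and $\tau_b$) and the fact that $\tau_b$ commutes with localization, the containment $\tau_b(\Delta,\ba_\bullet) \subseteq J$ can be checked after localizing at each prime of $R$; moreover the localized $c'$ and $d$ stay in the corresponding localizations of $J$, and $d$ remains a big sharp test element by Remark \ref{RemarkLocalizingTestElementsExist}. So assume $(R,\bm)$ is local. Then by Theorem \ref{TheoremBigVsNonFinitisticTestIdeals} we have $\tau_b(\Delta,\ba_\bullet) = \tld\tau(\Delta,\ba_\bullet)$, and Lemma \ref{LemmaHaraTakagiCharacterizationOfTestIdeal} applied to the big sharp test element $d \in J$ tells us that every $z \in \tau_b(\Delta,\ba_\bullet)$ can be written
\[
z \;=\; \sum_{e=0}^{e_1}\sum_{i=1}^{r_e} \phi_i^{(e)}\!\left(d\, x_i^{(e)}\right),
\]
where $x_i^{(e)} \in \ba_{p^e-1}$ and $\phi_i^{(e)} \in \Hom_R(\Fr{e}{R(\lceil(p^e-1)\Delta\rceil)},R)$.

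Finally, uniform $(\Delta,\ba_\bullet,F)$-compatibility of $J$ says exactly that for every $e$, every $a\in\ba_{p^e-1}$, and every such $\phi$, one has $\phi(\Fr{e}{aJ}) \subseteq J$. Applying this with $a = x_i^{(e)}$ and using $d \in J$, each summand $\phi_i^{(e)}(d\, x_i^{(e)}) = \phi_i^{(e)}(\Fr{e}{(x_i^{(e)} d)})$ lies in $J$, whence $z \in J$. This gives $\tau_b(\Delta,\ba_\bullet) \subseteq J$ after localization, and hence globally, completing the proof. The main obstacle is the technical one just navigated: ensuring the constructed big sharp test element actually lies in $J$ (handled by multiplying the auxiliary $c$ by $c'$) and justifying the Hara--Takagi characterization in the non-local setting (handled by localization, since both sides of the desired containment commute with localization).
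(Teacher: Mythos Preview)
Your argument is correct, but it takes a different route from the paper's proof. The paper reduces to the local case, then argues via the injective hull: if $I$ is uniformly $(\Delta,\ba_\bullet,F)$-compatible with $I\cap R^\circ\neq\emptyset$, then Lemma \ref{LemmaFSubmoduleLocalCharacterizationOfCenterOfFPurity}(c) says that for any $f\in I\cap R^\circ$ the map $E_{R/I}\to E_R\otimes \Fr{e}{R(\lceil(p^e-1)\Delta\rceil)}$ is zero, hence $E_{R/I}\subseteq 0^{*\Delta,\ba_\bullet}_{E_R}$, and taking annihilators gives $I\supseteq\tau_b(\Delta,\ba_\bullet)$ immediately. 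Your approach instead manufactures a big sharp test element $d=(cc')^m$ lying in $J$ by tweaking the auxiliary element in Lemma \ref{LemmaBigSharpTestElementsExist}, and then feeds that $d$ into the Hara--Takagi description (Lemma \ref{LemmaHaraTakagiCharacterizationOfTestIdeal}) to express each element of $\tau_b$ as a finite sum of images $\phi_i^{(e)}(d\,x_i^{(e)})$, each of which lands in $J$ by the very definition of uniform $(\Delta,\ba_\bullet,F)$-compatibility. The paper's proof is shorter and more conceptual, going straight through Matlis duality; yours is more explicit and stays on the ``map'' side of the duality, at the cost of the extra step of arranging $d\in J$. One small remark: the sum in Lemma \ref{LemmaHaraTakagiCharacterizationOfTestIdeal} starts at $e=0$, while Definition \ref{DefinitionUniformlyFCompatible} only quantifies over $e>0$, but the $e=0$ terms are harmless since $\phi_i^{(0)}\in\Hom_R(R,R)$ is multiplication by an element of $R$ and $d\in J$.
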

\begin{proof}
As above, we may assume that $R$ is local.
If the conclusion of the theorem is false then by Lemma \ref{EasyPropertiesOfUniformlyFCompatibleIdeals} we may assume that there exists some $I \subsetneq \tau_b(\Delta, \ba_{\bullet})$ which is uniformly $(\Delta, \ba_{\bullet}, F)$-compatible and which satisfies $I \cap R^{\circ} \neq \emptyset$.  This implies that for every $e \geq 0$, every $a \in \ba_{p^e - 1}$ and every $f \in I \cap R^{\circ}$, the composition
\[
\xymatrix{
E_{R/I} \ar[r] & E_R \ar[r] & E_R \tensor_R \Fr{e}{R} \ar[rr]^-{\Fr{e}{(\times a f)}} & & E_R \tensor_R \Fr{e}{R} \ar[r] & E_R \tensor_R \Fr{e}{R(\lceil (p^e - 1) \Delta \rceil)}
}
\]
is zero.  But this implies that $E_{R/I}$ is contained in the $(\Delta, \ba_{\bullet})$-tight closure of zero in $E_R$.  However, $I = \Ann_R E_{R/I} \supseteq \Ann_R 0^{* \Delta, \ba_{\bullet}}_{E_R} = \tau_b(\Delta, \ba_{\bullet})$, which is impossible.
\end{proof}

\begin{corollary}
\label{CorollaryFedderCriterionForTestIdeals}
Suppose that $R$ is a quotient of a regular local ring $S$ by an ideal $I$ and denote the projection $S \rightarrow S/I = R$ by $\pi$. Further suppose that $\ba_\bullet$ is a graded system of ideals of $S$ such that both $(S, \ba_{\bullet})$ and $(R, \overline{\ba}_{\bullet})$ are pairs.  Consider the ideals $J$ that contain $I$ and that $(J/I) \cap R^{\circ} \neq \emptyset$.  Then $\pi^{-1}\tau_b(R,\ba_{\bullet})$ is the unique smallest ideal of this set which also satisfies the property that
\[
\ba_{p^e - 1} (I^{[p^e]} : I) \subseteq (J^{[p^e]} : J).
\]
for all $e > 0$.
\end{corollary}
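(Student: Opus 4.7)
The plan is to combine the preceding two results, Proposition~\ref{PropFedderCriterionForCompatible} and Theorem~\ref{TheoremTestIdealIsSmallestFCompatible}, using the order-preserving bijection $J \leftrightarrow \overline{J} = J/I$ between ideals of $S$ containing $I$ and ideals of $R = S/I$.

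First I would note that, for any $J \supseteq I$, Proposition~\ref{PropFedderCriterionForCompatible} identifies the Fedder-type containment
\[
\ba_{p^e - 1}(I^{[p^e]} : I) \subseteq (J^{[p^e]} : J), \quad \text{for all } e > 0,
\]
with the statement that $\overline{J} \subseteq R$ is uniformly $(\overline{\ba}_\bullet, F)$-compatible. Next, since the projection $\pi$ is surjective, the side condition $(J/I) \cap R^\circ \neq \emptyset$ is literally $\overline{J} \cap R^\circ \neq \emptyset$. So the collection of $J$'s described in the corollary is in order-preserving bijection with the collection of ideals $\overline{J} \subseteq R$ that are uniformly $(\overline{\ba}_\bullet, F)$-compatible and satisfy $\overline{J} \cap R^\circ \neq \emptyset$.

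Then I would invoke Theorem~\ref{TheoremTestIdealIsSmallestFCompatible}, applied to the pair $(R, \overline{\ba}_\bullet)$, which asserts that the unique smallest such $\overline{J}$ is $\tau_b(R, \ba_\bullet)$. Pulling back along $\pi$, which preserves inclusions and carries the smallest member of the downstairs collection to the smallest member of the upstairs collection, yields $\pi^{-1}\tau_b(R, \ba_\bullet)$ as the required unique minimal element.

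There is no serious obstacle here; the entire argument is a routine translation through the correspondence $J \leftrightarrow \overline{J}$, with all of the genuine content already absorbed into Proposition~\ref{PropFedderCriterionForCompatible} (the Fedder-type criterion) and Theorem~\ref{TheoremTestIdealIsSmallestFCompatible} (identification of the big test ideal as the smallest uniformly compatible ideal meeting $R^\circ$). The only mild point worth verifying explicitly is that minimality and the ``meets $R^\circ$'' condition each pass cleanly through $\pi^{-1}$, which is immediate from surjectivity of $\pi$ and the fact that every admissible $J$ contains $I = \ker \pi$.
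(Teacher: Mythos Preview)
Your proposal is correct and is precisely the argument the paper has in mind: the corollary is stated without proof, immediately after Theorem~\ref{TheoremTestIdealIsSmallestFCompatible}, and is intended to follow by combining that theorem with Proposition~\ref{PropFedderCriterionForCompatible} via the order-preserving correspondence $J \leftrightarrow J/I$, exactly as you describe.
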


\begin{remark}
We note that in the case that $R = S$, Corollary \ref{CorollaryFedderCriterionForTestIdeals} looks very similar to the characterization of test ideals given in \cite[Definition 2.9]{BlickleMustataSmithDiscretenessAndRationalityOfFThresholds}.  There, in the context of a pair $(R, \ba^t)$ where $R$ is regular, they show that the test ideal is the smallest ideal $J$ such that $\ba^{\lceil tp^e \rceil} \subseteq J^{[p^e]}$ for all $e > 0$.  On the other hand, in the case that $\ba_i = R$ for all $i \geq 0$ (but $R$ is possibly singular), the characterization of the big test ideal in Corollary \ref{CorollaryFedderCriterionForTestIdeals} coincides precisely with a criterion given in \cite{LyubeznikSmithCommutationOfTestIdealWithLocalization}.
\end{remark}

In the case that $R = S$, this gives an alternate proof of the following result of Takagi; see \cite[Proof of Theorem 2.4]{TakagiFormulasForMultiplierIdeals}.

\begin{corollary}
Suppose that $R$ is a regular local ring and $(R, \ba_{\bullet})$ is a pair.  Then $\tau_b(\ba_{\bullet})^{* \ba_{\bullet}} = R$.
\end{corollary}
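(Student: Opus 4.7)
The plan is to combine three ingredients already established in the excerpt: (i) the big test ideal $\tau_b(\ba_{\bullet})$ is uniformly $(\ba_{\bullet}, F)$-compatible, by Proposition \ref{PropTestIdealIsCompatible}; (ii) in the regular case Fedder's criterion (Proposition \ref{PropFedderCriterionForCompatible} with $S = R$ and $I = 0$) translates uniform $(\ba_{\bullet}, F)$-compatibility of an ideal $J \subseteq R$ into the containments $\ba_{p^e-1} \cdot J \subseteq J^{[p^e]}$ for every $e > 0$, since $(0^{[p^e]} : 0) = R$; and (iii) $\tau_b(\ba_{\bullet})$ meets $R^{\circ}$, because by Lemma \ref{LemmaBigSharpTestElementsExist} there exists a big sharp test element in $R^{\circ}$, and such elements lie in $\tau_b(\ba_{\bullet})$ by definition.

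First I would fix any $c \in \tau_b(\ba_{\bullet}) \cap R^{\circ}$, which exists by (iii). Next, applying (i) and (ii) to the ideal $J = \tau_b(\ba_{\bullet})$, I obtain
\[
\ba_{p^e-1} \cdot \tau_b(\ba_{\bullet}) \subseteq \tau_b(\ba_{\bullet})^{[p^e]}
\]
for every $e \geq 0$. Since $c \in \tau_b(\ba_{\bullet})$, this yields $c \cdot a \in \tau_b(\ba_{\bullet})^{[p^e]}$ for every such $e$ and every $a \in \ba_{p^e - 1}$. Rewriting $c a$ as $c \cdot a \cdot 1^{p^e}$, this is precisely the condition from the definition of $\ba_{\bullet}$-tight closure witnessing that $1 \in \tau_b(\ba_{\bullet})^{*\ba_{\bullet}}$, with test element $c$ and with $N = \tau_b(\ba_{\bullet}) \subseteq R = M$. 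Since $1$ lies in the tight closure of $\tau_b(\ba_{\bullet})$, one obtains $\tau_b(\ba_{\bullet})^{*\ba_{\bullet}} = R$, as desired.

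There is no real obstacle; the corollary is a direct formal consequence of the Fedder-type characterization packaged in Corollary \ref{CorollaryFedderCriterionForTestIdeals}. The only small care needed is to match the tight-closure convention (``for all $e \gg 0$'') against the Fedder-style statement (``for all $e \geq 0$''), but the latter trivially implies the former, so nothing substantive is required beyond assembling the three ingredients above.
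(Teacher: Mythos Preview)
Your proof is correct and is essentially the same as the paper's. The paper's one-line proof cites Corollary \ref{CorollaryFedderCriterionForTestIdeals} (which packages together your ingredients (i)--(iii)) and then makes the identical observation that $\tau_b(R,\ba_{\bullet})\,\ba_{p^e-1}\,1^{p^e} \subseteq \tau_b(R,\ba_{\bullet})^{[p^e]}$ witnesses $1 \in \tau_b(\ba_{\bullet})^{*\ba_{\bullet}}$; you simply unpack that corollary into Proposition \ref{PropTestIdealIsCompatible}, Proposition \ref{PropFedderCriterionForCompatible}, and Lemma \ref{LemmaBigSharpTestElementsExist}, which is if anything slightly cleaner since only the uniform $(\ba_{\bullet},F)$-compatibility of $\tau_b$ is needed, not its minimality.
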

\begin{proof}
Use Corollary \ref{CorollaryFedderCriterionForTestIdeals} and observe that $\tau_b(R, \ba_{\bullet}) \ba_{p^e - 1} 1^{p^e} \subseteq \tau_b(R, \ba_{\bullet})^{[p^e]}$ for all $e \gg 0$.
\end{proof}

Since test ideals are closely related to multiplier ideals (see \cite{SmithMultiplierTestIdeals}, \cite{HaraInterpretation}, \cite{HaraYoshidaGeneralizationOfTightClosure}, \cite{TakagiInterpretationOfMultiplierIdeals}) it is natural to ask whether multiplier ideals (or even multiplier ideal-like constructions such as adjoint ideals, see \cite{LazarsfeldPositivity2} and \cite{TakagiPLTAdjoint}) always yield uniformly $(\Delta, F)$-compatible ideals.  As we will see, the answer to this is affirmative.

\begin{theorem}
\label{TheoremGeneralizationOfCentersOfLogCanonicityAreFCompatible}
Suppose $R$ is a normal domain and a quotient of a regular ring essentially of finite type over a perfect field.  Set $X = \Spec R$ and suppose that $(X, \Delta)$ is a pair such that $K_X + \Delta$ is $\bQ$-Cartier.  Further suppose that $\pi : \tld X \rightarrow X$ is a proper birational morphism and that $\tld X$ is normal.  If $G$ is any effective integral divisor on $\tld X$ such that $\pi_* \O_{\tld X}(\lceil K_{\tld X} - \pi^*(K_X + \Delta) \rceil + G)$ is naturally a submodule of $\O_X$ (in particular, this happens if $G$ is exceptional), then $\pi_* \O_{\tld X}(\lceil K_{\tld X} - \pi^*(K_X + \Delta) \rceil + G)$ is uniformly $(\Delta, F)$-compatible.
\end{theorem}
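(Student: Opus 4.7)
The plan is to follow the strategy of Hara--Watanabe (cited in the theorem statement and also used in the proof of Theorem 5.9 of \cite{HaraWatanabeFRegFPure}), which lifts a given map $\phi$ from $X$ up to $\tld X$ and then tracks zeros and poles using the discrepancy identity $K_{\tld X} \sim_{\bQ} \pi^*(K_X + \Delta) + (K_{\tld X} - \pi^*(K_X + \Delta))$. Set $N := \lceil K_{\tld X} - \pi^*(K_X + \Delta) \rceil + G$ and $J_0 := \pi_* \O_{\tld X}(N) \subseteq \O_X$. Since $R$ is normal and the claim we must prove is local on $X$ (by Lemma \ref{UniformlyFCompatibleIdealsLocalize}), and since the sheaves in question are reflexive of rank $1$, it is enough to construct, for an arbitrary $\phi : \Fr{e}{R(\lceil (p^e-1) \Delta \rceil)} \rightarrow R$, a compatible map $\tld\phi$ on $\tld X$ and then push forward.

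First, I would translate $\phi$ into a divisorial datum. By Grothendieck duality applied to the finite morphism $F^e : X \to X$, combined with reflexivity on the normal scheme $X$, the module $\Hom_R(\Fr{e}{R(\lceil(p^e-1)\Delta\rceil)}, R)$ is identified with (global sections of) $\Fr{e}{\O_X((1-p^e) K_X - \lceil (p^e - 1) \Delta \rceil)^{**}}$. Thus $\phi$ corresponds to a rational section $s$ of the reflexive sheaf $\O_X((1-p^e)(K_X+\Delta) + \langle (p^e - 1)\Delta\rangle)$ (here $\langle \cdot \rangle$ denotes the fractional part), i.e.\ to an effective Weil divisor $D_\phi$ whose associated $\bQ$-linear equivalence class is $(1-p^e)(K_X+\Delta) + \langle (p^e-1)\Delta\rangle$. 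Pulling $s$ back through $\pi$ (which is an isomorphism in codimension one after possibly shrinking $\tld X$, or using reflexivity), I obtain a rational section $\tld s$ of the corresponding sheaf on $\tld X$, and hence an $\O_{\tld X}$-linear map
\[
\tld\phi : \Fr{e}{\O_{\tld X}\bigl(p^e N + \lceil (p^e-1)(K_{\tld X} - \pi^*(K_X+\Delta)) \rceil - \text{(correction)}\bigr)} \longrightarrow \O_{\tld X}(N),
\]
whose $\pi_*$ recovers $\phi$ restricted to the claimed subsheaf $\pi_* \Fr{e}{\O_{\tld X}(N)}$, using that $G$ either is exceptional or that $\pi_* \O_{\tld X}(N) \hookrightarrow \O_X$ by hypothesis.

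The heart of the argument is then a rounding/discrepancy computation: I must check that the divisor appearing in the domain of $\tld\phi$ dominates $p^e N$, equivalently that
\[
p^e \bigl(\lceil K_{\tld X} - \pi^*(K_X + \Delta) \rceil + G\bigr) \;\leq\; \lceil (p^e - 1)(K_{\tld X} - \pi^*(K_X + \Delta)) \rceil + \bigl(\lceil K_{\tld X} - \pi^*(K_X+\Delta)\rceil + G\bigr) + p^e G,
\]
which follows coefficient-by-coefficient from the elementary inequality $p^e \lceil x \rceil \leq \lceil (p^e - 1) x \rceil + \lceil x \rceil$ for $x \in \bQ$ (applied to each prime divisor) together with the obvious $p^e G \geq (p^e - 1) G$ on the exceptional/prescribed side. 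This shows $\tld\phi$ restricts to a map $\Fr{e}{\O_{\tld X}(N)} \to \O_{\tld X}(N)$.

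Finally, applying $\pi_*$ to this restriction, and using that $\pi_* \O_{\tld X}(N) = J_0 \subseteq \O_X$, yields $\phi(\Fr{e}{J_0}) \subseteq J_0$. Since any $a \in \ba_{p^e - 1}$ can be absorbed into $\phi$ by replacing it with $\phi \circ \Fr{e}{(\times a)}$, the same argument applies uniformly in $a$. The main obstacle will be the careful reflexive setup in the duality step together with the coefficient bookkeeping in the rounding inequality above; this is the technical core that is implicit in \cite{HaraWatanabeFRegFPure} and must be verified cleanly in the present triple setting.
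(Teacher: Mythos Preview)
Your overall strategy coincides with the paper's: both translate $\phi$ into a section $f$ via duality for the finite morphism $F^e$, pull $f$ back to $\tld X$ using the $\bQ$-Cartier hypothesis, and reinterpret it as a map preserving $\O_{\tld X}(N)$. However, the rounding step you isolate as ``the heart of the argument'' is wrong.

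The claimed inequality $p^e \lceil x \rceil \leq \lceil (p^e - 1) x \rceil + \lceil x \rceil$ is \emph{false} for $x \in \bQ$. Take $x = -9/10$ and $p^e = 3$: the left side is $3\cdot 0 = 0$ while the right side is $\lceil -9/5 \rceil + 0 = -1$. This value of $x$ is exactly the discrepancy along the strict transform of a component of $\Delta$ with coefficient $9/10$; since $G$ is typically exceptional, the extra $+G$ in your displayed inequality gives no help on such components. There is also some confusion between $F^e_*$ and $(F^e)^*$ in your bookkeeping: to restrict a map with domain $\Fr{e}{\O_{\tld X}(D)}$ to $\Fr{e}{\O_{\tld X}(N)}$ you need $D \geq N$, not $D \geq p^e N$.

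The correct computation is simpler. After identifying $\phi$ with $f \in R(\lfloor (1-p^e)(K_X+\Delta)\rfloor)$ and using the $\bQ$-Cartier hypothesis to pull back (take $r$ with $r(K_X+\Delta)$ Cartier, pull back $f^r$, then descend), one finds $f$ is a global section of $\O_{\tld X}((1-p^e)\lfloor \pi^*(K_X+\Delta)\rfloor)$. Since $\sHom(\Fr{e}{\O_{\tld X}(N)}, \O_{\tld X}(N)) \cong \Fr{e}{\O_{\tld X}((1-p^e)(K_{\tld X} - N))}$, the only inequality you need is
\[
(1-p^e)\lfloor\pi^*(K_X+\Delta)\rfloor \;\leq\; (1-p^e)(K_{\tld X} - N),
\]
equivalently $N \geq K_{\tld X} - \pi^*(K_X+\Delta)$, which is immediate from $N = \lceil K_{\tld X} - \pi^*(K_X+\Delta)\rceil + G$ with $G \geq 0$. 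No delicate coefficient-by-coefficient rounding is needed.

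One minor point: your final paragraph absorbs an element $a \in \ba_{p^e-1}$, but the theorem concerns only a pair $(X,\Delta)$; there is no graded system $\ba_\bullet$ in its statement.
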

The proof uses the same technique as the proof of the main theorem of \cite{HaraWatanabeFRegFPure}.
\begin{proof}
First note, that we may assume that $\pi_* K_{\tld X} = K_X$ (as divisors) and that $K_{\tld X}$ (and thus $K_X$) is an integral Weil divisor.

Choose a map $\phi \in \Hom_R(\Fr{e}{R(\lceil (p^e - 1) \Delta \rceil)}, R)$.  By using \cite[Lemma 3.4]{HaraWatanabeFRegFPure}, we may identify $\phi$ with an element $f \in R( \lfloor (1 - p^e)(K_X + \Delta) \rfloor )$.  Choose $r > 0$ to be an integer such that $r(K_R + \Delta)$ is Cartier.  Thus
\[
f^r \in R( r \lfloor (1 - p^e)(K_X + \Delta) \rfloor) \subseteq R((1 - p^e)(r (K_X + \Delta))).
\]
Therefore we can view $f^r$ as a global section of
\[
\O_{\tld X}((1 - p^e) \pi^*(r(K_X + \Delta))) \subseteq \O_{\tld X}(r \lceil (1 - p^e) {1 \over r} \pi^*(r(K_X + \Delta))\rceil ).
\]
Write ${1 \over r} \pi^*(r(K_X + \Delta)) = K_{\tld X} - \Sigma a_i E_i$ (here we mean equality as $\bQ$-Weil divisors).  Note that not all of the $E_i$ are exceptional divisors; some correspond to components of the strict transform of $\Delta$.  Also note that $\Sigma a_i E_i = K_{\tld X} - \pi^*(K_X + \Delta)$.  Thus, we may view $f$ as a global section of
\[
\O_{\tld X}(\lceil (1 - p^e) ( K_{\tld X} - \Sigma a_i E_i ) \rceil) \subseteq \O_{\tld X}((1 - p^e) \lfloor K_{\tld X} - \Sigma a_i E_i \rfloor) \subseteq \O_{\tld X}( (1 - p^e) (K_{\tld X} - \lceil \Sigma a_i E_i \rceil - G)),
\]
which we view as $\O_{\tld X}( (1 - p^e) K_X - (1 - p^e)(\lceil \Sigma a_i E_i \rceil + G)).$
By using \cite[Lemma 3.4]{HaraWatanabeFRegFPure} in the opposite direction as before, we can view $f$ as some $\psi$ in the global sections of
\begin{align*}
& \sHom_{\O_{\tld X}}(\Fr{e}{\O_{\tld X}((1 - p^e)(\lceil \Sigma a_i E_i \rceil + G))}, \O_X) \\
\cong & \sHom_{\O_{\tld X}}(\Fr{e}{\O_{\tld X}(\lceil \Sigma a_i E_i \rceil + G)}, \O_{\tld X}(\lceil \Sigma a_i E_i \rceil + G)).
\end{align*}
At the level of the fraction field of $R$ (or simply outside of the exceptional set of $\pi$ and outside the support of $\Delta$), $\phi$ and $\psi$ induce the same map.  Therefore, considering the action of $\psi$ on global sections, we see that $\pi_* \O_{\tld X}(\lceil K_{\tld X} - \pi^*(K_X + \Delta) \rceil + G) = \pi_* \O_{\tld X}(\lceil \Sigma a_i E_i \rceil + G)$ is uniformly $(\Delta, F)$-compatible.
\end{proof}

In some sense we can view the ideals obtained in this way as a generalization of centers of log canonicity (in particular, any center of log canonicity may be obtained for an appropriate choice of $G$).  Also note that all of the ideals obtained this way are necessarily integrally closed, but there are test ideals which are not integrally closed.

In particular, if in a fixed positive characteristic we have a log resolution of $(X, \Delta)$, then the multiplier ideal computed from that resolution is $(\Delta, F)$-compatible.  The same argument works for adjoint ideals as well since we have a great deal of freedom in how we choose the divisor $G$.  We also note, as mentioned above, that for any exceptional divisor $E_i$ of $\pi$ with discrepancy $\leq -1$, we can choose $G$ so that $\lceil K_{\tld X} - \pi^*(K_X + \Delta) \rceil + G \geq -E_i$.  Thus we obtain the following result.

\begin{theorem}
\label{TheoremCentersOfLogCanonicityReduce}
Suppose that $(X, \Delta)$ is a pair where $X$ is a variety of finite type over a field of characteristic zero.  Further suppose that $K_X + \Delta$ is $\bQ$-Cartier.  Suppose that $Q \in X$ is a center of log canonicity corresponding to an exceptional divisor $E$ in some proper birational morphism $\pi : \tld X \rightarrow X$ (or a non-exceptional center as well).  Then after generic reduction to characteristic $p > 0$, the corresponding ideals $Q_p$ are uniformly $(X_p, \Delta_p)$-compatible.
\end{theorem}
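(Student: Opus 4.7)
The plan is to apply Theorem~\ref{TheoremGeneralizationOfCentersOfLogCanonicityAreFCompatible} after reducing modulo~$p$, choosing the auxiliary effective divisor~$G$ so that the resulting pushforward ideal equals exactly~$\mathcal{I}_Q$. Spreading the entire setup---$X$, $\Delta$, the proper birational morphism $\pi\colon \tld X\to X$, and the distinguished divisor $E$---to a model over a finitely generated $\mathbb{Z}$-algebra (we may assume $\tld X$ is smooth by first passing to a log resolution), for almost every prime~$p$ the reduction $\pi_p\colon\tld X_p\to X_p$ remains proper birational with $\tld X_p$ smooth, and the discrepancy $a_{E_p}=a_E\le -1$ is preserved because it is a rational-coefficient divisorial invariant.

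Next I would construct $G$ explicitly. Decompose
\[
K_{\tld X}-\pi^*(K_X+\Delta)=a_E E+\sum_{i\neq E}a_i E_i+\sum_j(-d_j)D_j,
\]
with the $E_i$ the remaining $\pi$-exceptional primes and the $D_j$ the components of $\pi^{-1}_*\Delta$ of coefficients $d_j>0$ in~$\Delta$, and set
\[
G_E=-1-\lceil a_E\rceil,\qquad G_{E_i}=\max\bigl(0,-\lceil a_i\rceil\bigr),\qquad G_{D_j}=-\lceil -d_j\rceil.
\]
Each coefficient is nonnegative ($a_E\le -1$ gives $G_E\ge 0$, and $d_j\ge 0$ gives $G_{D_j}\ge 0$), and an elementary computation yields
\[
\lceil K_{\tld X}-\pi^*(K_X+\Delta)\rceil+G=-E+F,
\]
where $F=\sum_{i\neq E}\max(0,\lceil a_i\rceil)E_i$ is an effective $\pi$-exceptional divisor that does not have~$E$ as a component.

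The remaining step is to identify $\pi_*\O_{\tld X}(-E+F)$ with~$\mathcal{I}_Q$. A section of $\O_{\tld X}(-E+F)$ vanishes to order~$\ge 1$ along~$E$ (since $F$ avoids~$E$) and is regular at every non-exceptional prime of~$\tld X$ (since $F$ is $\pi$-exceptional); as the image of the $\pi$-exceptional locus has codimension~$\ge 2$ in~$X$, the normality of~$X$ lets such a section extend to a regular function on~$X$ vanishing on $\pi(E)=\overline Q$. Conversely, for $f\in\mathcal{I}_Q$ the pullback satisfies $\Div(\pi^*f)\ge E\ge E-F$. Hence $\pi_*\O_{\tld X}(-E+F)=\mathcal{I}_Q$ is a submodule of~$\O_X$, Theorem~\ref{TheoremGeneralizationOfCentersOfLogCanonicityAreFCompatible} applies, and $\mathcal{I}_{Q_p}$ is uniformly $(\Delta_p,F)$-compatible. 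The parenthetical non-exceptional case proceeds identically with the component $D_j=E$ (of coefficient $d_{D_j}\ge 1$) playing the role of the exceptional~$E$. The main obstacle I anticipate is precisely this identification in codimension~two; should it pose a difficulty in some edge case, one can fall back on the containment $\pi_*\O_{\tld X}(-E+F)\subseteq\mathcal{I}_Q$ with $\mathcal{I}_{Q_p}$ a minimal associated prime thereof, then invoke Corollary~\ref{PropPrimaryDecompositionOfUniformlyFCompatibleIdeals} together with the preceding proposition that radicals of uniformly $F$-compatible ideals remain uniformly $F$-compatible.
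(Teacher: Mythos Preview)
Your proposal is correct and follows essentially the same approach as the paper. The paper's own argument is just the sentence preceding the theorem: for any exceptional (or non-exceptional) divisor $E$ with discrepancy $\le -1$, one chooses $G$ so that $\lceil K_{\tld X}-\pi^*(K_X+\Delta)\rceil+G\ge -E$, and then appeals to Theorem~\ref{TheoremGeneralizationOfCentersOfLogCanonicityAreFCompatible}; you have simply filled in the explicit choice of $G$ and the identification $\pi_*\O_{\tld X}(-E+F)=\mathcal{I}_Q$ that the paper leaves to the reader.
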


\section{Results related to $F$-adjunction and conductor ideals}
\label{SectionResultsRelatedToFAdjunction}

In this section, we prove a number of local geometric properties of centers of $F$-purity.  There have been a number of related results in the past, see for example \cite{VassilevTestIdeals}, \cite{SchwedeSharpTestElements} and \cite[Theorem 4.1]{EnescuHochsterTheFrobeniusStructureOfLocalCohomology}.  Many of these results can also be proven directly, using the Fedder-type criteria, for sharply $F$-pure pairs $(R, \ba_{\bullet})$ under the hypothesis that $R$ is a quotient of a regular ring.

\begin{theorem}
\label{TheoremUnionOfCentersOfFPurityFormAnFPureSubscheme}
Suppose that $(R, \Delta, \ba_{\bullet})$  is sharply $F$-pure.  Then any finite (scheme-theoretic) union of centers of sharp $F$-purity for $(R, \Delta, \ba_{\bullet})$ form an $F$-pure subscheme.
\end{theorem}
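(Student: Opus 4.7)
The plan is to observe that this theorem is essentially an immediate consequence of results already proved earlier in the paper, so the work is to correctly assemble the pieces.

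First, I would translate the scheme-theoretic statement into ideal-theoretic language. If $P_1, \dots, P_n$ are centers of sharp $F$-purity for $(R, \Delta, \ba_{\bullet})$, then the scheme-theoretic union of the closed subschemes $V(P_1), \dots, V(P_n)$ of $X = \Spec R$ is precisely $V(I)$, where
\[
I := P_1 \cap P_2 \cap \cdots \cap P_n.
\]
So the claim reduces to showing that $R/I$ is $F$-pure.

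Second, by the proposition identifying centers of sharp $F$-purity with the prime uniformly $(\Delta, \ba_{\bullet}, F)$-compatible ideals, each $P_i$ is uniformly $(\Delta, \ba_{\bullet}, F)$-compatible. Lemma \ref{EasyPropertiesOfUniformlyFCompatibleIdeals} then shows that the finite intersection $I$ is itself uniformly $(\Delta, \ba_{\bullet}, F)$-compatible.

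Third, I would invoke Corollary \ref{CorUniformlyFCompatibleDefinesFPureSubscheme}: since $(R, \Delta, \ba_{\bullet})$ is sharply $F$-pure and $I$ is uniformly $(\Delta, \ba_{\bullet}, F)$-compatible, $I$ defines an $F$-pure subscheme, which is exactly what we need.

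There is essentially no obstacle — all the work is in the preceding lemmas. The only thing to be careful about is conceptual: Corollary \ref{CorUniformlyFCompatibleDefinesFPureSubscheme} yields $F$-purity of the quotient $R/I$ but does not automatically promote this to sharp $F$-purity of the triple $(R/I, \Delta|_{V(I)}, \ba_{\bullet}(R/I))$ (as noted in the remark following that corollary, the compatible splitting need not send $\Fr{e}{I}$ into $I$ for the chosen $d \in \ba_{p^e-1}$). The theorem statement only asserts $F$-purity of the subscheme, so this subtlety is exactly what is assumed away, and no additional argument is required.
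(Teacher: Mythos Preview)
Your proposal is correct and follows essentially the same route as the paper: identify the union with $V(I)$ for $I = \bigcap P_i$, use Lemma \ref{EasyPropertiesOfUniformlyFCompatibleIdeals} to see that $I$ is uniformly $(\Delta, \ba_{\bullet}, F)$-compatible, and then apply Corollary \ref{CorUniformlyFCompatibleDefinesFPureSubscheme}. Your closing remark about the distinction between $F$-purity of $R/I$ and sharp $F$-purity of the induced triple is accurate and matches the paper's own caveat following that corollary.
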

\begin{proof}
A (scheme-theoretic) union of centers of sharp $F$-purity is, by definition, a reduced scheme such that each irreducible component is a center of sharp $F$-purity.  Let $I$ denote the reduced ideal defining this scheme.  Note that $I = \bigcap_{i=1}^n P_i$ where the $P_i$ are prime centers of sharp $F$-purity.  But then $I$ is uniformly $(\Delta, \ba_{\bullet}, F)$-compatible by Lemma \ref{EasyPropertiesOfUniformlyFCompatibleIdeals}, which implies that $R/I$ is $F$-pure by Corollary \ref{CorUniformlyFCompatibleDefinesFPureSubscheme}.
\end{proof}

\begin{remark}
 Actually, the same proof shows that the closure of any (possibly-infinite, scheme-theoretic) union of centers of sharp $F$-purity for $(R, \Delta, \ba_{\bullet})$ form an $F$-pure subscheme.  However, we do not know if the set of centers of sharp $F$-purity for $(R, \Delta, \ba_{\bullet})$ can be infinite (in the local case, it is always finite as we have shown).
\end{remark}




\begin{corollary}
\label{CorollaryUnionOfCentersOfLogCanonicityFPureType}
Suppose $(X, \Delta)$ is a pair over $\bC$ and $K_X + \Delta$ is $\bQ$-Cartier.  If $(X, \Delta)$ is of dense sharply $F$-pure type, then any (scheme-theoretic) union of centers of log canonicity also has dense $F$-pure type.  In particular, any such (scheme-theoretic) union has Du Bois singularities.
\end{corollary}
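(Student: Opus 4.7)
The plan is to combine Theorem~\ref{TheoremUnionOfCentersOfFPurityFormAnFPureSubscheme} with the reduction-to-characteristic-$p$ statement Theorem~\ref{TheoremCentersOfLogCanonicityReduce}, and then to conclude the Du Bois property via a standard theorem that dense $F$-injective type implies Du Bois singularities. Concretely, I would write the scheme-theoretic union as $Z = Z_1 \cup \dots \cup Z_n$, where each $Z_j$ is a center of log canonicity of $(X, \Delta)$ with reduced defining ideal $I_j \subseteq R$, so that $Z = V(I)$ for $I = I_1 \cap \dots \cap I_n$. By passing to a common log resolution $\pi : \tld X \rightarrow X$ dominating log resolutions witnessing each $Z_j$, I may assume there exist divisors $E_1, \dots, E_n$ on $\tld X$ with discrepancies $\leq -1$ and with $\pi(E_j)$ having the same generic point as $Z_j$.

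Next, I would reduce modulo $p$. By hypothesis, the family of characteristic-$p$ models $(X_p, \Delta_p)$ is sharply $F$-pure for a Zariski-dense set of primes, and away from a further finite set of primes the resolution $\pi$, the divisors $E_j$, and the ideals $I_j$ all reduce compatibly (with discrepancies preserved). For each $p$ in the resulting dense set, Theorem~\ref{TheoremCentersOfLogCanonicityReduce} (applied to each $Z_j$) yields that every $(I_j)_p$ is uniformly $(\Delta_p, F)$-compatible. Since the class of uniformly $(\Delta_p, F)$-compatible ideals is closed under finite intersection (Lemma~\ref{EasyPropertiesOfUniformlyFCompatibleIdeals}), the ideal $I_p = (I_1)_p \cap \dots \cap (I_n)_p$ defining $Z_p$ is uniformly $(\Delta_p, F)$-compatible as well. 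Because $(X_p, \Delta_p)$ is sharply $F$-pure, Corollary~\ref{CorUniformlyFCompatibleDefinesFPureSubscheme} (which underlies Theorem~\ref{TheoremUnionOfCentersOfFPurityFormAnFPureSubscheme}) shows that $R_p/I_p$ is $F$-pure. Thus $Z$ is of dense $F$-pure type.

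For the final sentence of the corollary, I would invoke the theorem that in characteristic zero any reduced scheme of dense $F$-injective type, and in particular of dense $F$-pure type, has Du Bois singularities. Applied to the subscheme $Z$ just constructed, this yields the Du Bois conclusion.

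The main obstacle I foresee is the bookkeeping for the reduction-mod-$p$ step carried out uniformly across several centers: one must check that a single dense set of primes serves all the $Z_j$ simultaneously, preserving the log resolution, the discrepancies of each $E_j$, and the (radical) structure of the scheme-theoretic union, so that the hypotheses of Theorem~\ref{TheoremCentersOfLogCanonicityReduce} and of Theorem~\ref{TheoremUnionOfCentersOfFPurityFormAnFPureSubscheme} are all available at the same $p$. Each individual compatibility is a standard ``spreading-out'' argument, but assembling them uniformly is where the care lies.
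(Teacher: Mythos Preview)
Your proposal is correct and follows essentially the same approach as the paper: the paper's proof simply says the main statement is a direct consequence of Theorem~\ref{TheoremUnionOfCentersOfFPurityFormAnFPureSubscheme} (implicitly via Theorem~\ref{TheoremCentersOfLogCanonicityReduce} for the reduction step) and then deduces the Du Bois conclusion from the fact that $F$-pure implies $F$-injective together with \cite[Theorem~6.1]{SchwedeFInjectiveAreDuBois}. Your version is a more explicit unpacking of exactly these ingredients, including the spreading-out bookkeeping the paper leaves to the reader.
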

\begin{proof}
 The main statement is a direct result of Theorem \ref{TheoremUnionOfCentersOfFPurityFormAnFPureSubscheme}.  The statement about Du Bois singularities follows immediately from the fact that $F$-pure singularities are $F$-injective and from \cite[Theorem 6.1]{SchwedeFInjectiveAreDuBois}.
\end{proof}

\begin{theorem}
Suppose that $(R, \Delta, \ba_{\bullet})$ is sharply $F$-pure.  Then any finite (scheme-theoretic) intersection of centers of sharp $F$-purity is a (scheme-theoretic) union of centers of sharp $F$-purity, and so the (scheme-theoretic) intersection cuts out an $F$-pure subscheme.
\end{theorem}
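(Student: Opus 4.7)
The plan is to translate the scheme-theoretic intersection of the centers $P_1, \ldots, P_n$ into the sum of ideals $I := P_1 + P_2 + \cdots + P_n$, then use the tools established earlier in the paper to show $I$ is itself a scheme-theoretic union of centers of sharp $F$-purity, and finally apply Theorem \ref{TheoremUnionOfCentersOfFPurityFormAnFPureSubscheme} to conclude.

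More precisely, I would first observe that each $P_i$ is, by definition, a prime uniformly $(\Delta, \ba_{\bullet}, F)$-compatible ideal. By Lemma \ref{EasyPropertiesOfUniformlyFCompatibleIdeals} (closure of uniformly compatible ideals under finite sums), the sum $I = P_1 + \cdots + P_n$ is again uniformly $(\Delta, \ba_{\bullet}, F)$-compatible. Then, since $(R, \Delta, \ba_{\bullet})$ is sharply $F$-pure by hypothesis, Corollary \ref{CorUniformlyFCompatibleDefinesFPureSubscheme} applies and gives $I = \sqrt{I}$.

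Now, being radical, $I$ admits a primary decomposition of the form $I = Q_1 \cap Q_2 \cap \cdots \cap Q_m$ where the $Q_j$ are the (minimal) associated primes of $I$. By Corollary \ref{PropPrimaryDecompositionOfUniformlyFCompatibleIdeals}, because $(R, \Delta, \ba_{\bullet})$ is sharply $F$-pure and $I$ is uniformly $(\Delta, \ba_{\bullet}, F)$-compatible, each $Q_j$ is itself a center of sharp $F$-purity for the triple. Thus, scheme-theoretically,
\[
V(P_1) \cap V(P_2) \cap \cdots \cap V(P_n) \;=\; V(I) \;=\; V(Q_1) \cup V(Q_2) \cup \cdots \cup V(Q_m),
\]
exhibiting the intersection as a (scheme-theoretic) union of centers of sharp $F$-purity.

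Finally, applying Theorem \ref{TheoremUnionOfCentersOfFPurityFormAnFPureSubscheme} to this union shows that $V(I)$ is an $F$-pure subscheme. I do not anticipate a serious obstacle: the entire argument is a careful bookkeeping exercise in translating the scheme-theoretic operation (intersection $\leftrightarrow$ sum of ideals) and chaining together Lemma \ref{EasyPropertiesOfUniformlyFCompatibleIdeals}, Corollary \ref{CorUniformlyFCompatibleDefinesFPureSubscheme}, Corollary \ref{PropPrimaryDecompositionOfUniformlyFCompatibleIdeals}, and Theorem \ref{TheoremUnionOfCentersOfFPurityFormAnFPureSubscheme}. The only subtle point worth flagging explicitly is the radicality step: without sharp $F$-purity of the ambient triple, the sum $P_1 + \cdots + P_n$ need not be radical, so Corollary \ref{CorUniformlyFCompatibleDefinesFPureSubscheme} is doing essential work in converting the sum-of-primes into the intersection-of-primes description.
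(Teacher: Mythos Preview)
Your proposal is correct and is exactly the argument the paper has in mind: the paper's proof is the single sentence ``This immediately follows from Lemma \ref{EasyPropertiesOfUniformlyFCompatibleIdeals},'' and your write-up is the natural unpacking of that sentence (sum of compatible ideals is compatible, hence radical by Corollary \ref{CorUniformlyFCompatibleDefinesFPureSubscheme}, hence its minimal primes are centers by Corollary \ref{PropPrimaryDecompositionOfUniformlyFCompatibleIdeals}, hence $F$-pure by Theorem \ref{TheoremUnionOfCentersOfFPurityFormAnFPureSubscheme} or directly by Corollary \ref{CorUniformlyFCompatibleDefinesFPureSubscheme}). Your flagging of the radicality step as the place where sharp $F$-purity is used is also on point.
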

\begin{proof}
This immediately follows from Lemma \ref{EasyPropertiesOfUniformlyFCompatibleIdeals}.  Alternately, if $R$ is a quotient of a regular local ring and $\Delta = 0$, then one can also prove the result using Proposition \ref{PropFedderCriterionForCompatible}.
\end{proof}

The following proposition shows that centers of sharp $F$-purity themselves force the existence of other centers of sharp $F$-purity.

\begin{proposition}
\label{PropositionCentersOfFPurityForQuotientsInduceCenters}
Suppose that $(R, \Delta, \ba_{\bullet})$ is a triple and that $I$ is a uniformly $(\Delta, \ba_{\bullet}, F)$-compatible ideal.  Further suppose that $P \supset I$ corresponds to a center of $F$-purity for $R/I$.  Then $P$ is a center of sharp $F$-purity for $(R, \Delta, \ba_{\bullet})$ as well.
\end{proposition}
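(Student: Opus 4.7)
The plan is to invoke the characterization proved earlier (``Centers of sharp $F$-purity for $(R, \Delta, \ba_\bullet)$ are precisely the prime ideals that are uniformly $(\Delta, \ba_\bullet, F)$-compatible'') and reduce the problem to showing that $P$ is uniformly $(\Delta, \ba_\bullet, F)$-compatible, since $P$ is already prime by hypothesis. The key idea is that the hypothesis on $I$ lets every relevant map for $(R, \Delta, \ba_\bullet)$ descend to a map on $R/I$, after which the hypothesis on $\overline{P}$ takes over.

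Concretely, I would fix an arbitrary $e > 0$, an element $a \in \ba_{p^e - 1}$ and an $R$-linear map
\[
\phi : \Fr{e}{R(\lceil (p^e - 1)\Delta \rceil)} \to R,
\]
and form the composition
\[
\psi \;:=\; \phi \circ \Fr{e}{(\times a)} : \Fr{e}{R} \to R,
\]
where the source of $\phi$ is reached via the natural inclusion $\Fr{e}{R} \hookrightarrow \Fr{e}{R(\lceil (p^e - 1)\Delta \rceil)}$. The uniform $(\Delta, \ba_\bullet, F)$-compatibility of $I$ says exactly that $\psi(\Fr{e}{I}) = \phi(\Fr{e}{(aI)}) \subseteq I$, so $\psi$ descends to an $R$-linear (hence $R/I$-linear) map
\[
\overline{\psi} : \Fr{e}{(R/I)} \to R/I.
\]

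Now I apply the hypothesis that $\overline{P} = P/I$ is a center of $F$-purity of $R/I$; equivalently, $\overline{P}$ is a prime uniformly $F$-compatible ideal of $R/I$. Therefore $\overline{\psi}(\Fr{e}{\overline{P}}) \subseteq \overline{P}$, and pulling this back to $R$ gives $\phi(\Fr{e}{(aP)}) = \psi(\Fr{e}{P}) \subseteq P$. Since $\phi$, $e$ and $a$ were arbitrary, $P$ is uniformly $(\Delta, \ba_\bullet, F)$-compatible, and being prime it is a center of sharp $F$-purity for $(R, \Delta, \ba_\bullet)$.

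There is essentially no obstacle here beyond checking that the descent of $\psi$ to $R/I$ is well-defined and that the definition of ``center of $F$-purity of $R/I$'' coincides with ``prime uniformly $F$-compatible ideal of $R/I$'' (which is the no-pair case $\Delta = 0$, $\ba_i = R$ of the equivalence established earlier). The only mildly subtle point is making sure that one checks the compatibility condition for maps out of $\Fr{e}{R(\lceil (p^e - 1)\Delta\rceil)}$ rather than merely out of $\Fr{e}{R}$; this is handled automatically because $\phi$ is only ever precomposed with the inclusion $\Fr{e}{R} \hookrightarrow \Fr{e}{R(\lceil (p^e - 1)\Delta\rceil)}$ before being restricted to $\Fr{e}{(aI)}$ and $\Fr{e}{(aP)}$, which already lie in $\Fr{e}{R}$.
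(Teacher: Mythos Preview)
Your proof is correct and follows essentially the same route as the paper's: both arguments fix an arbitrary $e$, $a \in \ba_{p^e-1}$, and a map out of $\Fr{e}{R(\lceil (p^e-1)\Delta\rceil)}$, use uniform $(\Delta,\ba_\bullet,F)$-compatibility of $I$ to descend the composite $\Fr{e}{R}\to R$ to a map $\Fr{e}{(R/I)}\to R/I$, and then use that $P/I$ is uniformly $F$-compatible in $R/I$ to conclude $\phi(\Fr{e}{(aP)})\subseteq P$. The only cosmetic difference is that the paper records this via commutative diagrams and phrases the conclusion directly, while you explicitly invoke the earlier equivalence between prime uniformly $(\Delta,\ba_\bullet,F)$-compatible ideals and centers of sharp $F$-purity.
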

\begin{proof}
Choose an arbitrary $e > 0$, an element $a \in \ba_{p^e - 1}$, and a map $\gamma : \Fr{e}{R(\lceil (p^e - 1) \Delta \rceil)} \rightarrow R$.
Since $I$ is uniformly $(\Delta, \ba_{\bullet}, F)$-compatible, we have a commutative diagram
\[
\xymatrix{
\Fr{e}{R} \ar[d] \ar[r]^-{\Fr{e}{(\times a)}} & \Fr{e}{R}\ar[r] & \Fr{e}{R(\lceil (p^e - 1) \Delta \rceil)} \ar[r]^-{\gamma} & R \ar[d] \\
\Fr{e}{R/I} \ar[rrr]^{\phi} &&& R/I
}
\]
where the vertical arrows are the natural maps.  Note that there exists a commutative diagram
\[
\xymatrix{
\Fr{e}{R/I} \ar[r]^-{\phi} \ar[d] & R/I \ar[d] \\
\Fr{e}{R/P} \ar[r]^-{\psi} & R/P.
}
\]
But this gives us a commutative diagram
\[
\xymatrix{
\Fr{e}{R} \ar[d]^{\alpha} \ar[r]^-{\Fr{e}{(\times a)}} & \Fr{e}{R} \ar[r] & \Fr{e}{R(\lceil (p^e - 1) \Delta \rceil)} \ar[r]^-{\gamma} & R \ar[d]^{\beta} \\
\Fr{e}{R/P} \ar[rrr]^{\psi} & & & R/P.
}
\]
Note that $\ker(\alpha) = \Fr{e}{P}$ and $\ker{\beta} = P$, so we see that $\gamma(\Fr{e}{(a P)}) \subseteq P$, which proves that $P$ is a center of sharp $F$-purity for $(R, \Delta, \ba_{\bullet})$ since $\gamma$ was arbitrary.
\end{proof}

\begin{remark}
 In the statement of Proposition \ref{PropositionCentersOfFPurityForQuotientsInduceCenters}, $P$ need not be prime (or even radical), it simply must be an ideal containing $I$ which corresponds to a uniformly $F$-compatible ideal of $R/I$.  The proof is unchanged.
\end{remark}

\begin{remark}
 The converse to the previous proposition is not true.  In particular, there exists a ring $R$, an ideal $I \subset R$ and a prime $P \supset I$ such that $P$ is a center of $F$-purity for $R$ but that $P$ does not correspond to a center of $F$-purity for $R/I$.  For example, consider the ring
\[
 R = k[a,b,c]/(a^3 + abc - b^2) = k[xy, x^2y, x-y] \subset k[x,y]
\]
where $k$ is a perfect field of positive characteristic.  (Geometrically, this example is obtained by taking the two axes in $\bA^2$ and gluing them together).

It is easy to see, using Fedder's criterion, that this ring is $F$-pure; see \cite{FedderFPureRat}.  The singular locus is defined by the height one prime ideal $(a,b)$.  Thus since $\dim R = 2$, the ideal $(a,b)$ is a center of $F$-purity (since strongly $F$-regular rings are normal).  Also note that $R/(a,b) \cong k[c]$ is regular (and so it has no non-trivial centers of $F$-purity).  On the other hand, I claim that the ideal $(a,b,c)$ is also a center of $F$-purity.  To see this, simply note that for any $f \in (a,b,c)$, we have that
\[
f (a^{3} + abc - b^{2})^{p^e - 1} \in (a^{p^e}, b^{p^e}, c^{p^e})
\]
for $e > 0$ simply because of the middle term $(abc)$.

On the other hand, let $I_C$ be the conductor ideal of $R$ in its normalization $R^N = k[x,y]$.  Inside $R^N$, the conductor ideal simply cuts out the two axes, that is $I_C = (xy)$.  It is then interesting to note that the origin $(x,y)$ is a center of sharp $F$-purity for the pair $(R^N, I_C) = (k[x,y], \Div(xy))$.
\end{remark}

The following corollary is closely related to Kawamata's subadjunction theorem, \cite{KawamataSubadjunction2}.  Note that we do not need to assume that the ambient ring $R$ is strongly $F$-regular (the characteristic $p$ analogue of Kawamata log terminal).  The following result generalizes \cite[Theorem 4.7]{AberbachEnescuStructureOfFPure} to triples $(R, \Delta, \ba_{\bullet})$, to the non-local case, and to the situation where $R$ is not (necessarily) the quotient of a regular ring.

\begin{corollary} \cite[Theorem 4.7]{AberbachEnescuStructureOfFPure}
\label{TheoremMaximalCenterOfFPuritySubadjunction}
Suppose that $(R, \Delta, \ba_{\bullet})$ is a triple and $P$ is a center of sharp $F$-purity for $(R, \Delta, \ba_{\bullet})$ that is maximal (as an ideal) among the centers of $F$-purity for $(R, \Delta, \ba_{\bullet})$ with respect to ideal containment.  Then $R/P$ is strongly $F$-regular.
\end{corollary}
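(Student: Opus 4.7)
The plan is a proof by contradiction: assuming $R/P$ is not strongly $F$-regular, I would manufacture a center of sharp $F$-purity strictly larger than $P$. First, since $P$ is a prime center of sharp $F$-purity, it is uniformly $(\Delta, \ba_{\bullet}, F)$-compatible (by the characterization earlier in Section \ref{SectionCentersOfFPurity}), so $R/P$ is a reduced $F$-finite domain.

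Now suppose $R/P$ is not strongly $F$-regular, and equip $R/P$ with the trivial triple structure ($\Delta_{R/P} = 0$, $\ba_i (R/P) = R/P$ for all $i$). The non-strongly-$F$-regular locus of this triple is a proper nonempty closed subset of $\Spec R/P$: openness of the strongly $F$-regular locus in an $F$-finite ring is standard, and the generic point of the domain $R/P$ is a field and hence strongly $F$-regular. Pick a minimal prime $\overline{Q}$ of this locus; by the proposition early in Section \ref{SectionCentersOfFPurity} on minimal primes of the non-strongly-$F$-regular locus, $\overline{Q}$ is a center of sharp $F$-purity for $R/P$. Because $\overline{Q}$ cannot be the zero ideal, its preimage $Q \subset R$ satisfies $P \subsetneq Q$. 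Applying Proposition \ref{PropositionCentersOfFPurityForQuotientsInduceCenters} with $I = P$ then upgrades $Q$ to a center of sharp $F$-purity for $(R, \Delta, \ba_{\bullet})$, contradicting the maximality of $P$.

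The only non-bookkeeping input is openness of the strongly $F$-regular locus for $F$-finite rings, which is standard; the remaining checks are purely definitional (the trivial triple on $R/P$ satisfies the conditions of a triple, and the notion of ``center of $F$-purity for $R/I$'' appearing in Proposition \ref{PropositionCentersOfFPurityForQuotientsInduceCenters} coincides with the trivial-triple version produced on $R/P$). I do not anticipate any further obstacle.
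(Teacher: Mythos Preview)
Your proposal is correct and follows essentially the same argument as the paper: assume $R/P$ is not strongly $F$-regular, pick a minimal prime $\overline{Q}$ of the non-strongly-$F$-regular locus of $R/P$ to obtain a center of $F$-purity for $R/P$, and then invoke Proposition \ref{PropositionCentersOfFPurityForQuotientsInduceCenters} to contradict maximality of $P$. The paper's proof is terser but structurally identical.
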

\begin{proof}
Suppose that $R/P$ is not strongly $F$-regular, then there exists a prime $Q \supsetneq P$ such that $Q/P$ is a center of $F$-purity for $R/P$ (for example, $Q$ could correspond to a minimal prime of the non-strongly $F$-regular locus of $R/P$).  But then $Q$ is a center of sharp $F$-purity for $(R, \Delta, \ba_{\bullet})$ as well, by Proposition \ref{PropositionCentersOfFPurityForQuotientsInduceCenters}, contradicting the maximality of the choice of $P$.
\end{proof}

\begin{remark}
Note that the previous result is not very interesting unless the triple $(R, \Delta, \ba_{\bullet})$ is sharply $F$-pure.  This is because in a local ring $(R, \bm)$ such that $(R, \Delta, \ba_{\bullet})$ is not sharply $F$-pure, the maximal height center of $F$-purity for $(R, \Delta, \ba_{\bullet})$ is the maximal ideal.
\end{remark}

%

Suppose that $R$ is a ring and $R^N$ is its normalization.  Since the height one associated primes of the conductor ideal of $R \subset R^N$ are among the minimal primes of the non-strongly $F$-regular locus, we see that the intersection of the height one associated primes of the conductor ideal are uniformly $F$-compatible.  This suggests that we might ask the following question: ``Is the conductor uniformly $F$-compatible as an ideal in $R$?"  Something very similar was done in \cite[Proposition 1.2.5]{BrionKumarFrobeniusSplittingMethods}, where they only considered splittings.  Therefore, for the convenience of the reader, we will modify their proof into our setting.

\begin{proposition}
\label{PropConductorIsFCompatible}
Given a reduced $F$-finite ring $R$ with normalization $R^N$, the conductor ideal of $R$ in $R^N$ is uniformly $F$-compatible.
\end{proposition}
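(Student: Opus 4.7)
The plan is to exploit the common total ring of fractions $K = \mathrm{Frac}(R) = \mathrm{Frac}(R^N)$, which agree because $R \hookrightarrow R^N$ is birational. Every $R$-linear map $\phi : \Fr{e}{R} \rightarrow R$ localizes at the non-zero-divisors of $R$ to produce a $K$-linear map $\phi_K : \Fr{e}{K} \rightarrow K$ that restricts to $\phi$ on $\Fr{e}{R}$. Viewing $R^N \subseteq K$, this is the mechanism that lets us move $R^N$-scalars through $\phi$, even though $\phi$ itself is only $R$-linear.

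First I would record the mild but essential fact that the conductor $\mathfrak{c} := \{r \in R : r R^N \subseteq R\}$ is an ideal not merely of $R$ but of $R^N$: for $c \in \mathfrak{c}$, $s \in R^N$, and any $t \in R^N$, the product $(sc)t = c(st)$ lies in $c R^N \subseteq R$, so $sc \in \mathfrak{c}$. Now fix $\phi : \Fr{e}{R} \rightarrow R$ and $c \in \mathfrak{c}$; to conclude $\phi(\Fr{e}{c}) \in \mathfrak{c}$, it suffices to show that $s \cdot \phi(\Fr{e}{c}) \in R$ for every $s \in R^N$. Since $\mathfrak{c}$ is an ideal of $R^N$, the element $s^{p^e} c$ lies in $\mathfrak{c} \subseteq R$, so $\Fr{e}{(s^{p^e} c)} \in \Fr{e}{R}$. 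Inside $\Fr{e}{K}$ the $K$-module structure gives $s \cdot \Fr{e}{c} = \Fr{e}{(s^{p^e} c)}$, so by the $K$-linearity of $\phi_K$ and the fact that $\phi_K$ extends $\phi$,
\[
s \cdot \phi(\Fr{e}{c}) \;=\; \phi_K(s \cdot \Fr{e}{c}) \;=\; \phi_K\bigl(\Fr{e}{(s^{p^e} c)}\bigr) \;=\; \phi\bigl(\Fr{e}{(s^{p^e} c)}\bigr) \;\in\; R,
\]
as required.

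There is no serious obstacle: the argument hinges only on the $K$-linearity of $\phi_K$ (automatic from localization, even when $R$ is reduced-but-not-a-domain, in which case $K$ is a finite product of fields) and on the fact that $\mathfrak{c}$ is $R^N$-stable. Notably, the approach avoids having to construct a genuine $R^N$-linear extension $\Fr{e}{R^N} \rightarrow R^N$ of $\phi$; passing through the ambient $K$ is enough, which sidesteps the more delicate question of whether such an extension carries $\Fr{e}{R^N}$ into $R^N$ rather than merely into $K$. The strategy closely parallels the Frobenius-splitting argument of \cite[Proposition~1.2.5]{BrionKumarFrobeniusSplittingMethods} mentioned just before the statement, generalized from a genuine splitting to an arbitrary $R$-linear map.
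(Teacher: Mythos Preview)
Your proof is correct and is essentially identical to the paper's own argument: both extend $\phi$ to the total ring of fractions, use that the conductor is an $R^N$-ideal, and compute $s\cdot\phi(c)=\phi(s^{p^e}c)\in R$ for $s\in R^N$. The paper likewise attributes the idea to \cite[Proposition~1.2.5]{BrionKumarFrobeniusSplittingMethods}.
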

\begin{proof}
The conductor ideal $I$ can be defined as ``the largest ideal $I \subseteq R$ that is simultaneously an ideal of $R^N$''.  It can also be described as
\[
 I := \Ann_R R^N/R = \{x \in R | x R^N \subseteq R\}.
\]
Following the proof of \cite[Proposition 1.2.5]{BrionKumarFrobeniusSplittingMethods}, consider $\phi \in \Hom_R(F^e_* R, R)$.  Notice, that by localization, $\phi$ extends to a map on the total field of fractions (which contains $R^N$).  We will abuse notation and also call this map $\phi$ (since it restricts to $\phi : F^e_* R \rightarrow R$).  Now choose $x \in F^e_* I$ and $r \in R^N$.  Then $\phi(x) r = \phi(x r^{p^e}) \in \phi(F^e_* R) \subseteq R$.  Thus $\phi(x) \in I$ as desired.
\end{proof}

One might ask even more generally if every map $\phi : F^e_* R \rightarrow R$ extends to a map on the normalization?  In particular, if $R$ is $F$-split, then is its normalization also $F$-split?  This second statement is found in \cite[Exercise 1.2.E(4)]{BrionKumarFrobeniusSplittingMethods}, and the proof also implies the first statement.

\begin{proposition}
\label{PropEveryMapExtends}
For a reduced $F$-finite ring $R$, every map $\phi : F^e_* R \rightarrow R$, when viewed as a map on total field of fractions, restricts to a map $\phi' : F^e_* R^N \rightarrow R^N$ on the normalization.
\end{proposition}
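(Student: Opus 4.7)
The plan is to extend $\phi$ to a $K$-linear map $\phi_K \colon F^e_* K \to K$, where $K$ denotes the total ring of fractions of $R$, and then combine the conductor ideal $I = \Ann_R(R^N/R) = \{r \in R : r R^N \subseteq R\}$ with the determinant trick (Cayley--Hamilton). If $R$ is already normal there is nothing to show, so we may assume $R \neq R^N$. Since $R$ is excellent, $R^N$ is a finitely generated $R$-module, and since $R^N \subseteq K$, the quotient $R^N/R$ is $R$-torsion; thus $I$ is a nonzero ideal of $R$ containing a nonzerodivisor, and in particular $I$ is a finitely generated faithful $R$-module. Localizing the $R$-linear map $\phi$ at the nonzerodivisors of $R$ produces $\phi_K$; here we use the fact (easy to verify directly for $R$ reduced and $F$-finite) that the localization of $F^e_* R$ at the nonzerodivisors of $R$ is canonically $F^e_* K$.

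Now fix $y \in F^e_* R^N$ (so that $y^{p^e} \in R^N$). The heart of the argument is to show that $I \cdot \phi_K(y) \subseteq I$. Given $i \in I$ and $s \in R^N$, the element $isy \in F^e_* K$ in fact lies in $F^e_* R$ because
\[
(isy)^{p^e} \;=\; i^{p^e} s^{p^e} y^{p^e} \;\in\; I^{p^e} \cdot R^N \;\subseteq\; I \cdot R^N \;\subseteq\; R,
\]
using that $I$ is an ideal (so $I^{p^e} \subseteq I$) and the defining property of the conductor. Hence $\phi(isy) \in R$, and by the $K$-linearity of $\phi_K$ we get $\phi(isy) = is \cdot \phi_K(y)$. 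Taking $s = 1$ first shows $i \cdot \phi_K(y) \in R$; varying $s$ over $R^N$ shows moreover that $i \cdot \phi_K(y) \cdot R^N \subseteq R$, which by the definition of the conductor means $i \cdot \phi_K(y) \in I$.

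Therefore $\phi_K(y) \cdot I \subseteq I$ with $I$ a finitely generated faithful $R$-module, and the determinant trick produces a monic polynomial over $R$ satisfied by $\phi_K(y)$. This shows $\phi_K(y) \in R^N$, so $\phi_K$ restricts to the desired map $\phi' \colon F^e_* R^N \to R^N$. The main technical point to check carefully is the well-definedness and $K$-linearity of the extension $\phi_K$ (together with the identification of $F^e_* R^N$ with the integral closure of $R$ inside $F^e_* K$), but both follow from standard facts about reduced $F$-finite rings; the genuinely new content is the conductor computation carried out above, which parallels the proof of Proposition \ref{PropConductorIsFCompatible}.
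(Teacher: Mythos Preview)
Your argument is correct and follows the same strategy as the paper's: show that the conductor $I$ satisfies $I\cdot\phi_K(y)\subseteq I$ for each $y\in F^e_*R^N$, and then deduce that $\phi_K(y)$ is integral over $R$. There are two cosmetic differences worth noting. First, the paper begins by reducing to the case where $R$ is a domain (using that minimal primes are uniformly $F$-compatible), whereas you avoid this by observing directly that $I$ contains a nonzerodivisor and is therefore a faithful finitely generated $R$-module; your route is slightly cleaner here. Second, once $I\phi_K(y)\subseteq I$ is established, you invoke the determinant trick immediately, while the paper iterates to get $I\phi_K(y)^m\subseteq I$ for all $m$ and then appeals to the ``bounded powers'' criterion for integrality (a fixed nonzerodivisor $c$ with $c\phi_K(y)^m\in R$ for all $m$); these are equivalent integrality criteria. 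Finally, the paper obtains $I\phi_K(y)\subseteq I$ by citing Proposition~\ref{PropConductorIsFCompatible} (that $\phi(F^e_*I)\subseteq I$), while you reprove this inline with the extra multiplier $s\in R^N$ built in; both arguments are really the same computation.
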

\begin{proof}
We follow the hint for \cite[Exercise 1.2.E(4)]{BrionKumarFrobeniusSplittingMethods}.  For any $x \in R^N \in K(R)$, we wish to show that $\phi(x) \in R^N$.  First we show that we can reduce to the case of a domain.  We write $R \subseteq K(R) = K_1 \oplus \dots \oplus K_t$ as a subring of its total field of fractions.  Since each minimal prime $Q_i$ of $R$ is uniformly $F$-compatible, it follows that $\phi$ induces a map $\phi_i : F^e_* R/Q_i \rightarrow R/Q_i$ for each $i$.  Notice that the normalization of $\Spec R$ is a disjoint union of components (each one corresponding to a minimal prime of $R$), and the $i$th component is equal to $\Spec (R/Q_i)^N$.  Thus, since we are ultimately interested in $\phi'$ restricted to each $(R/Q_i)^N$, it is harmless to assume that $R$ is a domain.

Suppose that $I$ is the conductor and consider $I\phi(x)$.  For any $z \in I$, $z\phi(x) = \phi(z^{p^e}x) \in \phi(F^e_* I) \subseteq I$ (notice that $z^{p^e}x \in I$ since $I$ is an ideal of $R^N$).  More generally, $z\phi(x)^m = z\phi(x)(\phi(x))^{m-1} \subseteq I (\phi(x))^{m-1}$ which implies that $I\phi(x)^m \subseteq I \phi(x)^{m-1}$, and so by induction $I \phi(x)^m \subseteq I \subset R$ for all $m > 0$.  This implies that for every $c \in I \subseteq R$ we have that $c \phi(x)^m \in I \subseteq R$.  Therefore $\phi(x)$ is integral over $R$ by \cite[Exercise 2.26(iv)]{HunekeSwansonIntegralClosure}.
\end{proof}

\begin{remark}
With notation as above, notice that $\phi : F^e_* R \rightarrow R$ sends $1$ to $1$ if and only if $\phi' : F^e_* R^N \rightarrow R^N$ sends $1$ to $1$.  This is because the two maps agree at the level of the total field of fractions.
\end{remark}

If $R$ is \stwo{} and seminormal (note that $F$-pure schemes are seminormal) then the conductor ideal is radical and height one in both $R$ and $R^N$, see \cite[Lemma 1.2]{TraversoPicardGroup} and \cite[Lemma 7.4]{GrecoTraversoSeminormal}.  Thus it follows that the conductor ideal corresponds to a reduced integral effective divisor $C$ on $X^N = \Spec R^N$. 
Now note that if $R$ is $F$-pure, then some surjective map $\phi \in \Hom_R(F^e_* R, R)$ extends to a $R^N$-linear map $\phi' : F^e_* R^N \rightarrow R^N$ by Proposition \ref{PropEveryMapExtends}.  Since $1 \in \phi(F^e_* R)$, we also have $1 \in \phi'(F^e_* R^N)$ and so $R^N$ is also $F$-pure.  Since the conductor ideal is uniformly $F$-compatible, $\phi'$ restricts to a map $\phi' : F^e_* I_C \rightarrow I_C$.  Applying $\Hom_{R^N}(I_C, \blank)$ we obtain a map $\phi'' : F^e_* R^N((p^e - 1)C) \rightarrow R^N$.  In fact, $\phi''$ restricts to $\phi$ since the two maps agree at the level of the total field of fractions.   It follows that the pair $(R^N, C)$ is also sharply $F$-pure.

\begin{corollary}
Suppose that $X$ is an affine \stwo{} scheme of finite type over a field of characteristic zero, and suppose that $X$ has dense $F$-pure type.  If $K_{X^N} + C$ is $\bQ$-Cartier then $X$ is weakly semi-log canonical.
\end{corollary}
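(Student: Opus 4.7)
The plan is to reduce the statement to a known implication (dense sharply $F$-pure type $\Rightarrow$ log canonical for pairs) applied to the normalization/conductor pair $(X^N, C)$, using the paragraph immediately preceding the corollary as the bridge between $F$-purity of $X$ and sharp $F$-purity of $(X^N, C)$.

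First I would pick a family of characteristic $p > 0$ models $(X_p, X^N_p, C_p)$ of $(X, X^N, C)$, taking care (by passing to a dense open set of primes if necessary) that normalization commutes with reduction, that $C_p$ is the conductor divisor of $X_p \subseteq X^N_p$, that $X_p$ remains $S_2$ and seminormal, and that $K_{X^N_p} + C_p$ is $\mathbb{Q}$-Cartier. Since $X$ has dense $F$-pure type, after shrinking we may further assume there is a Zariski-dense set $W$ of primes for which $X_p$ is $F$-pure.

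For each $p \in W$, apply the discussion immediately before the statement of the corollary: because $R = \mathcal{O}(X_p)$ is reduced, $S_2$, seminormal (automatic from $F$-purity), and $F$-pure, Proposition \ref{PropConductorIsFCompatible} together with Proposition \ref{PropEveryMapExtends} gives a splitting $\phi'' : F^e_* R^N((p^e - 1)C_p) \to R^N$ extending a chosen splitting of $R$. In other words, the pair $(X^N_p, C_p)$ is sharply $F$-pure for every $p \in W$. Consequently, the characteristic zero pair $(X^N, C)$ has dense sharply $F$-pure type.

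Finally I would invoke the standard Hara--Watanabe-type result (see \cite{HaraWatanabeFRegFPure}) that, for a pair whose log canonical divisor is $\mathbb{Q}$-Cartier, dense sharply $F$-pure type implies log canonical: since $K_{X^N} + C$ is $\mathbb{Q}$-Cartier by hypothesis, this gives that $(X^N, C)$ is log canonical, which by definition means that $X$ is weakly semi-log canonical (note that $X$ is already reduced, $S_2$, and seminormal since $F$-pure type forces seminormality after reduction and lifting). The main obstacle I anticipate is not the $F$-purity argument itself, which is essentially handed to us by the preceding paragraph, but the bookkeeping involved in reduction mod $p$: one must check that the formation of the normalization, of the conductor divisor, and of the $\mathbb{Q}$-Cartier structure on $K_{X^N} + C$ all commute with reduction on a Zariski-dense set of primes, so that sharp $F$-purity upstairs in characteristic $p$ really does witness dense sharply $F$-pure type of $(X^N, C)$ in characteristic zero.
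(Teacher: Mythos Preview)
Your proposal is correct and follows essentially the same route as the paper: reduce $X$ and $X^N$ to characteristic $p$, use the paragraph preceding the corollary to pass from $F$-purity of $X_p$ to sharp $F$-purity of $(X^N_p, C_p)$, and then invoke \cite{HaraWatanabeFRegFPure} to conclude that $(X^N, C)$ is log canonical. The paper's own proof is a two-line sketch of exactly this, so your write-up is simply a more fleshed-out version, including the reduction-mod-$p$ bookkeeping that the paper leaves implicit.
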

\begin{proof}
The proof of this result follows from the main result of \cite{HaraWatanabeFRegFPure}.  One simply reduces both $X$ and its normalization $X^N$ to characteristic $p > 0$.
\end{proof}

\begin{corollary}
Suppose that $(X, \Delta)$ is a pair of finite type over a field of characteristic zero, and further suppose that $(X, \Delta)$ has dense sharply $F$-pure type.  Suppose that $W \subset \Spec R$ is a scheme-theoretic union of centers of log canonicity that is also \stwo.  Set $W^N$ to be the normalization of $W$ and set $C_W$ is the divisor corresponding to the conductor on $W^N$.  If $K_{W^N} + C_W$ is $\bQ$-Cartier then $W$ is weakly semi-log canonical.
\end{corollary}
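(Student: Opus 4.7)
The plan is to chain the two preceding corollaries. First, since $(X,\Delta)$ has dense sharply $F$-pure type, Corollary \ref{CorollaryUnionOfCentersOfLogCanonicityFPureType} applies and tells us that the scheme-theoretic union $W$ of centers of log canonicity has dense $F$-pure type (after reduction modulo $p$, a Zariski-dense set of the models $W_p \subseteq X_p$ are $F$-pure subschemes, coming from the fact that each $W_p$ is cut out by a uniformly $F$-compatible ideal in the sharply $F$-pure model of $(X,\Delta)$).

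Next, I would verify that the hypotheses of the previous corollary (the one establishing that an \stwo{} scheme with dense $F$-pure type is weakly semi-log canonical when $K_{X^N}+C$ is $\bQ$-Cartier) apply to $W$ in place of $X$. The hypothesis that $W$ is \stwo{} is given, the hypothesis that $K_{W^N} + C_W$ is $\bQ$-Cartier is also given, and the dense sharply $F$-pure type condition on $W$ is what was just supplied by Corollary \ref{CorollaryUnionOfCentersOfLogCanonicityFPureType}. Hence applying the previous corollary directly to $W$ yields that $W$ is weakly semi-log canonical.

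The only subtlety that might merit explicit comment is making sure the reduction-to-characteristic-$p$ data used for $W$ is compatible with what one needs: the models $W_p$ should be taken as the reductions of the same scheme-theoretic union inside the models $X_p$, and then $F$-purity of $W_p$ in the dense set of primes is exactly what was produced in the earlier corollary. Once one has an $F$-pure model $W_p$, its normalization $W_p^N$ is $F$-pure and the pair $(W_p^N, (C_W)_p)$ is sharply $F$-pure by the discussion preceding the previous corollary (using Proposition \ref{PropEveryMapExtends} and Proposition \ref{PropConductorIsFCompatible}). Then the Hara--Watanabe theorem lifts the sharp $F$-purity of $(W_p^N, (C_W)_p)$ in a Zariski-dense set of primes to log canonicity of $(W^N, C_W)$ in characteristic zero, giving by definition that $W$ is weakly semi-log canonical.

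The main obstacle, which is really only bookkeeping, is confirming that the reductions are taken consistently: the same spreading-out used to get $F$-pure type for $(X,\Delta)$ is used to get the scheme-theoretic union of centers of log canonicity as $W_p \subseteq X_p$, so that the $F$-purity of $W_p$ (as a subscheme) and the identification of its normalization with $W_p^N$ and the conductor with $(C_W)_p$ all happen simultaneously on a Zariski-dense set of primes. Once this is in place, no new machinery is required beyond the two preceding corollaries.
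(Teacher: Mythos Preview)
Your proposal is correct and matches the paper's intended argument: the corollary is stated without proof in the paper precisely because it follows immediately by chaining Corollary \ref{CorollaryUnionOfCentersOfLogCanonicityFPureType} (to get that $W$ has dense $F$-pure type) with the immediately preceding corollary (to conclude weak semi-log canonicity). Your additional remarks on compatibility of the spreading-out data are sensible bookkeeping, though the paper treats this as routine.
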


\section{Further Questions}

There are a number of open questions that one can ask about this theory.





\begin{question}
Is the integral closure of a uniformly $F$-compatible ideal still uniformly $F$-compatible?  What about other ideal closures?
\end{question}


\begin{question}
 Is there a generalization of log canonical singularities (without the normal or $\bQ$-Cartier hypotheses) and a purely characteristic zero proof of the following?
 \vskip4pt \hskip-12pt
 {\bf Conjecture: } \emph{Suppose that $(X, \Delta)$ is a log canonical pair.  Then any scheme-theoretic union of centers of log canonicity is ``generalized log canonical''. }
 \vskip4pt \hskip-12pt
 Recent work of Hacon and de Fernex, see \cite{HaconDeFernex}, gives a definition of normal log canonical singularities without the hypothesis that $K_X + \Delta$ is $\bQ$-Cartier, so this may be a good place to start.
\end{question}




\begin{thebibliography}{LRPT06}

\bibitem[AE05]{AberbachEnescuStructureOfFPure}
{\sc I.~M. Aberbach and F.~Enescu}: \emph{The structure of {$F$}-pure rings},
  Math. Z. \textbf{250} (2005), no.~4, 791--806. {\sf\scriptsize MR2180375}

\bibitem[Amb98]{AmbroSeminormalLocus}
{\sc F.~Ambro}: \emph{{The locus of log canonical singularities}}, Preprint
  available at arXiv:math.AG/9806067 (1998).

\bibitem[BMS08]{BlickleMustataSmithDiscretenessAndRationalityOfFThresholds}
{\sc M.~Blickle, M.~Musta{\c{t}}{\u{a}}, and K.~Smith}: \emph{Discreteness and
  rationality of {F}-thresholds}, Michigan Math. J. \textbf{57} (2008), 43--61,
  Dedicated to Mel Hochster on the occasion of his 65th birthday.

\bibitem[BK05]{BrionKumarFrobeniusSplittingMethods}
{\sc M.~Brion and S.~Kumar}: \emph{Frobenius splitting methods in geometry and
  representation theory}, Progress in Mathematics, vol. 231, Birkh\"auser
  Boston Inc., Boston, MA, 2005. {\sf\scriptsize MR2107324 (2005k:14104)}

\bibitem[DH08]{HaconDeFernex}
{\sc T.~{De Fernex} and C.~Hacon}: \emph{Singularities on normal varieties}, to
  appear in Compositio Math. (2008), arXiv:0805.1767.

\bibitem[EH08]{EnescuHochsterTheFrobeniusStructureOfLocalCohomology}
{\sc F.~Enescu and M.~Hochster}: \emph{The {F}robenius structure of local
  cohomology}, Algebra Number Theory \textbf{2} (2008), no.~7, 721--754.
  {\sf\scriptsize MR2460693}

\bibitem[Fed83]{FedderFPureRat}
{\sc R.~Fedder}: \emph{{$F$}-purity and rational singularity}, Trans. Amer.
  Math. Soc. \textbf{278} (1983), no.~2, 461--480. {\sf\scriptsize MR701505
  (84h:13031)}

\bibitem[GT80]{GrecoTraversoSeminormal}
{\sc S.~Greco and C.~Traverso}: \emph{On seminormal schemes}, Compositio Math.
  \textbf{40} (1980), no.~3, 325--365. {\sf\scriptsize MR571055 (81j:14030)}

\bibitem[Har01]{HaraInterpretation}
{\sc N.~Hara}: \emph{Geometric interpretation of tight closure and test
  ideals}, Trans. Amer. Math. Soc. \textbf{353} (2001), no.~5, 1885--1906
  (electronic). {\sf\scriptsize MR1813597 (2001m:13009)}

\bibitem[Har05]{HaraACharacteristicPAnalogOfMultiplierIdealsAndApplications}
{\sc N.~Hara}: \emph{A characteristic {$p$} analog of multiplier ideals and
  applications}, Comm. Algebra \textbf{33} (2005), no.~10, 3375--3388.
  {\sf\scriptsize MR2175438 (2006f:13006)}

\bibitem[HT04]{HaraTakagiOnAGeneralizationOfTestIdeals}
{\sc N.~Hara and S.~Takagi}: \emph{On a generalization of test ideals}, Nagoya
  Math. J. \textbf{175} (2004), 59--74. {\sf\scriptsize MR2085311
  (2005g:13009)}

\bibitem[HW02]{HaraWatanabeFRegFPure}
{\sc N.~Hara and K.-I. Watanabe}: \emph{F-regular and {F}-pure rings vs. log
  terminal and log canonical singularities}, J. Algebraic Geom. \textbf{11}
  (2002), no.~2, 363--392. {\sf\scriptsize MR1874118 (2002k:13009)}

\bibitem[HY03]{HaraYoshidaGeneralizationOfTightClosure}
{\sc N.~Hara and K.-I. Yoshida}: \emph{A generalization of tight closure and
  multiplier ideals}, Trans. Amer. Math. Soc. \textbf{355} (2003), no.~8,
  3143--3174 (electronic). {\sf\scriptsize MR1974679 (2004i:13003)}

\bibitem[Har94]{HartshorneGeneralizedDivisorsOnGorensteinSchemes}
{\sc R.~Hartshorne}: \emph{Generalized divisors on {G}orenstein schemes},
  Proceedings of Conference on Algebraic Geometry and Ring Theory in honor of
  Michael Artin, Part III (Antwerp, 1992), vol.~8, 1994, pp.~287--339.
  {\sf\scriptsize MR1291023 (95k:14008)}

\bibitem[Hir64]{HironakaResolution}
{\sc H.~Hironaka}: \emph{Resolution of singularities of an algebraic variety
  over a field of characteristic zero. {I}, {II}}, Ann. of Math. (2) 79 (1964),
  109--203; ibid. (2) \textbf{79} (1964), 205--326. {\sf\scriptsize MR0199184
  (33 \#7333)}

\bibitem[Hoc07]{HochsterFoundations}
{\sc M.~Hochster}: \emph{Foundations of tight closure theory}, lecture notes
  from a course taught on the University of Michigan Fall 2007.  Available online at
  http://www.math.lsa.umich.edu/\~{ }hochster/711F07/711.html.

\bibitem[HH90]{HochsterHunekeTC1}
{\sc M.~Hochster and C.~Huneke}: \emph{Tight closure, invariant theory, and the
  {B}rian\c con-{S}koda theorem}, J. Amer. Math. Soc. \textbf{3} (1990), no.~1,
  31--116. {\sf\scriptsize MR1017784 (91g:13010)}

\bibitem[HH94]{HochsterHunekeFRegularityTestElementsBaseChange}
{\sc M.~Hochster and C.~Huneke}: \emph{{$F$}-regularity, test elements, and
  smooth base change}, Trans. Amer. Math. Soc. \textbf{346} (1994), no.~1,
  1--62. {\sf\scriptsize MR1273534 (95d:13007)}

\bibitem[HH06]{HochsterHunekeTightClosureInEqualCharactersticZero}
{\sc M.~Hochster and C.~Huneke}: \emph{Tight closure in equal characteristic
  zero}, A preprint of a manuscript, 2006.

\bibitem[HR76]{HochsterRobertsFrobeniusLocalCohomology}
{\sc M.~Hochster and J.~L. Roberts}: \emph{The purity of the {F}robenius and
  local cohomology}, Advances in Math. \textbf{21} (1976), no.~2, 117--172.
  {\sf\scriptsize MR0417172 (54 \#5230)}

\bibitem[HS06]{HunekeSwansonIntegralClosure}
{\sc C.~Huneke and I.~Swanson}: \emph{Integral closure of ideals, rings, and
  modules}, London Mathematical Society Lecture Note Series, vol. 336,
  Cambridge University Press, Cambridge, 2006. {\sf\scriptsize MR2266432}

\bibitem[Kaw98]{KawamataSubadjunction2}
{\sc Y.~Kawamata}: \emph{Subadjunction of log canonical divisors. {II}}, Amer.
  J. Math. \textbf{120} (1998), no.~5, 893--899. {\sf\scriptsize MR1646046
  (2000d:14020)}

\bibitem[KSB88]{KollarShepherdBarron}
{\sc J.~Koll{\'a}r and N.~I. Shepherd-Barron}: \emph{Threefolds and
  deformations of surface singularities}, Invent. Math. \textbf{91} (1988),
  no.~2, 299--338. {\sf\scriptsize MR922803 (88m:14022)}

\bibitem[Kol97]{KollarSingularitiesOfPairs}
{\sc J.~Koll{\'a}r}: \emph{Singularities of pairs}, Algebraic geometry---Santa
  Cruz 1995, Proc. Sympos. Pure Math., vol.~62, Amer. Math. Soc., Providence,
  RI, 1997, pp.~221--287. {\sf\scriptsize MR1492525 (99m:14033)}

\bibitem[K+92]{KollarFlipsAndAbundance}
{\sc J.~Koll{\'a}r and {14 coauthors}}: \emph{Flips and abundance for algebraic
  threefolds}, Soci\'et\'e Math\'ematique de France, Paris, 1992, Papers from
  the Second Summer Seminar on Algebraic Geometry held at the University of
  Utah, Salt Lake City, Utah, August 1991, Ast\'erisque No. 211 (1992).
  {\sf\scriptsize MR1225842 (94f:14013)}

\bibitem[KM98]{KollarMori}
{\sc J.~Koll{\'a}r and S.~Mori}: \emph{Birational geometry of algebraic
  varieties}, Cambridge Tracts in Mathematics, vol. 134, Cambridge University
  Press, Cambridge, 1998, With the collaboration of C. H. Clemens and A. Corti,
  Translated from the 1998 Japanese original. {\sf\scriptsize MR1658959
  (2000b:14018)}

\bibitem[KSS08]{KovacsSchwedeSmithLCImpliesDuBois}
{\sc S.~Kov\'acs, K.~Schwede, and K.~Smith}: \emph{{{C}ohen-{M}acaulay semi-log
  canonical singularities are {D}u {B}ois}}, arXiv:0801.1541.

\bibitem[LRPT06]{LauritzenRabenThomsenGlobalFRegularityOfSchuertVarieties}
{\sc N.~Lauritzen, U.~Raben-Pedersen, and J.~F. Thomsen}: \emph{Global
  {$F$}-regularity of {S}chubert varieties with applications to {$\scr
  D$}-modules}, J. Amer. Math. Soc. \textbf{19} (2006), no.~2, 345--355
  (electronic). {\sf\scriptsize MR2188129 (2006h:14005)}

\bibitem[Laz04]{LazarsfeldPositivity2}
{\sc R.~Lazarsfeld}: \emph{Positivity in algebraic geometry. {II}}, Ergebnisse
  der Mathematik und ihrer Grenzgebiete. 3. Folge. A Series of Modern Surveys
  in Mathematics [Results in Mathematics and Related Areas. 3rd Series. A
  Series of Modern Surveys in Mathematics], vol.~49, Springer-Verlag, Berlin,
  2004, Positivity for vector bundles, and multiplier ideals. {\sf\scriptsize
  MR2095472 (2005k:14001b)}

\bibitem[LS01]{LyubeznikSmithCommutationOfTestIdealWithLocalization}
{\sc G.~Lyubeznik and K.~E. Smith}: \emph{On the commutation of the test ideal
  with localization and completion}, Trans. Amer. Math. Soc. \textbf{353}
  (2001), no.~8, 3149--3180 (electronic). {\sf\scriptsize MR1828602
  (2002f:13010)}

\bibitem[MR85]{MehtaRamanathanFrobeniusSplittingAndCohomologyVanishing}
{\sc V.~B. Mehta and A.~Ramanathan}: \emph{Frobenius splitting and cohomology
  vanishing for {S}chubert varieties}, Ann. of Math. (2) \textbf{122} (1985),
  no.~1, 27--40. {\sf\scriptsize MR799251 (86k:14038)}

\bibitem[Sch08]{SchwedeSharpTestElements}
{\sc K.~Schwede}: \emph{Generalized test ideals, sharp {$F$}-purity, and sharp
  test elements}, Math. Res. Lett. \textbf{15} (2008), no.~6, 1251--1261.
  {\sf\scriptsize MR2470398}

\bibitem[Sch09]{SchwedeFInjectiveAreDuBois}
{\sc K.~E. Schwede}: \emph{{$F$}-injective singularities are {D}u {B}ois},
  Amer. J. Math. \textbf{131} (2009), no.~2, 445--473.

\bibitem[Sha07]{SharpGradedAnnihilatorsOfModulesOverTheFrobeniusSkewPolynomial%
Ring}
{\sc R.~Y. Sharp}: \emph{Graded annihilators of modules over the {F}robenius
  skew polynomial ring, and tight closure}, Trans. Amer. Math. Soc.
  \textbf{359} (2007), no.~9, 4237--4258 (electronic). {\sf\scriptsize
  MR2309183 (2008b:13006)}

\bibitem[Smi95]{SmithDModuleStructure}
{\sc K.~E. Smith}: \emph{The {$D$}-module structure of {$F$}-split rings},
  Math. Res. Lett. \textbf{2} (1995), no.~4, 377--386. {\sf\scriptsize
  MR1355702 (96j:13024)}

\bibitem[Smi97]{SmithFRatImpliesRat}
{\sc K.~E. Smith}: \emph{{$F$}-rational rings have rational singularities},
  Amer. J. Math. \textbf{119} (1997), no.~1, 159--180. {\sf\scriptsize
  MR1428062 (97k:13004)}

\bibitem[Smi00]{SmithMultiplierTestIdeals}
{\sc K.~E. Smith}: \emph{The multiplier ideal is a universal test ideal}, Comm.
  Algebra \textbf{28} (2000), no.~12, 5915--5929, Special issue in honor of
  Robin Hartshorne. {\sf\scriptsize MR1808611 (2002d:13008)}

\bibitem[Tak04a]{TakagiInversion}
{\sc S.~Takagi}: \emph{F-singularities of pairs and inversion of adjunction of
  arbitrary codimension}, Invent. Math. \textbf{157} (2004), no.~1, 123--146.
  {\sf\scriptsize MR2135186}

\bibitem[Tak04b]{TakagiInterpretationOfMultiplierIdeals}
{\sc S.~Takagi}: \emph{An interpretation of multiplier ideals via tight
  closure}, J. Algebraic Geom. \textbf{13} (2004), no.~2, 393--415.
  {\sf\scriptsize MR2047704 (2005c:13002)}

\bibitem[Tak06]{TakagiFormulasForMultiplierIdeals}
{\sc S.~Takagi}: \emph{Formulas for multiplier ideals on singular varieties},
  Amer. J. Math. \textbf{128} (2006), no.~6, 1345--1362. {\sf\scriptsize
  MR2275023 (2007i:14006)}

\bibitem[Tak08]{TakagiPLTAdjoint}
{\sc S.~Takagi}: \emph{A characteristic {$p$} analogue of plt singularities and
  adjoint ideals}, Math. Z. \textbf{259} (2008), no.~2, 321--341.
  {\sf\scriptsize MR2390084 (2009b:13004)}

\bibitem[Tra70]{TraversoPicardGroup}
{\sc C.~Traverso}: \emph{Seminormality and {P}icard group}, Ann. Scuola Norm.
  Sup. Pisa (3) \textbf{24} (1970), 585--595. {\sf\scriptsize MR0277542 (43
  \#3275)}

\bibitem[Vas98]{VassilevTestIdeals}
{\sc J.~C. Vassilev}: \emph{Test ideals in quotients of {$F$}-finite regular
  local rings}, Trans. Amer. Math. Soc. \textbf{350} (1998), no.~10,
  4041--4051. {\sf\scriptsize MR1458336 (98m:13009)}

\end{thebibliography}

\providecommand{\bysame}{\leavevmode\hbox to3em{\hrulefill}\thinspace}
\providecommand{\MR}{\relax\ifhmode\unskip\space\fi MR}
\providecommand{\MRhref}[2]{%
  \href{http://www.ams.org/mathscinet-getitem?mr=#1}{#2}
}
\providecommand{\href}[2]{#2}

\end{document}